\newcommand{\G}{\mathbf G} 
\newcommand{\Gt}{\widetilde{\G}}  
\newcommand{\subG}{\mathbf H} 
\newcommand{\W}{\mathbf W} 
\newcommand{\Wt}{\widetilde{\W}}  
\newcommand{\T}{\mathbf T} 
\newcommand{\m}{\widetilde{m}}  
\newcommand{\EG}{\mathbf G^-} 
\newcommand{\EM}{\mathbf W^-}
\newcommand{\Em}{m^-} 
\newcommand{\EA}{\alpha^-}
\newcommand\VG{\mathbf G^+} 
\newcommand{\VM}{\mathbf W^+}
\newcommand{\Vm}{m^+} 
\newcommand{\VA}{\alpha^+}
\newcommand{\D}{D} \newcommand{\SG}{\mathcal{S}}
\newcommand{\MSG}{\mathcal{MS}} 
\DeclareMathOperator{\Tr}{Tr} \DeclareMathOperator{\ind}{ind} 
\newcommand{\eqLapl}{\prescript {\chi \!}{}{\Delta}^{\!\Wt}{}}
\newcommand{\R}{\mathbb{R}} 
\newcommand{\C}{\mathbb{C}} 
\newcommand{\Z}{\mathbb{Z}} 
\newcommand{\Torus}{\mathbb{T}}
\newcommand{\UVP}{\bigcup_{\alpha\in\mathcal{A}(\G)}}
\newtheorem{theorem}{Theorem}[section]
\newtheorem*{theorem*}{Theorem}
\newtheorem{proposition}[theorem]{Proposition}
\newtheorem{corollary}[theorem]{Corollary}
\theoremstyle{definition} 
\newtheorem{definition}[theorem]{Definition}
\newtheorem{example}[theorem]{Example}
\newtheorem{remark}[theorem]{Remark}
\theoremstyle{plain}
\theoremstyle{definition}
\numberwithin{equation}{section}
\newcommand{\myparagraph}[1]{\noindent\textbf{#1}}
\newcommand{\Betti}{b} 
\newcommand{\1}{\mathbbm 1} 
\newcommand{\bd} {\partial} 
\DeclareMathOperator*{\dcup} {\mathaccent\cdot\cup} 
\newcommand{\e}{\mathrm e} 
\newcommand{\im}{\mathrm i} 
\newcommand{\dd} {\, \mathrm d} 
\newcommand{\orient}[1]{\accentset{\curvearrowright}{#1}}
\newcommand{\normsymb}{\|} 
\newcommand{\quadtext}[1]{\quad\text{#1}\quad}
\newcommand{\qquadtext}[1]{\qquad\text{#1}\qquad}
\newcommand{\normsqr}[2][{}]{\normsymb{#2}\normsymb^2_{#1}} 
\newcommand{\iprod}[3][{}]{\langle{#2},{#3}\rangle_{#1}} 
  \newcounter{lookcounter} \setcounter{lookcounter}{0}
\title{Spectral gaps and discrete magnetic Laplacians}%
\author{John Stewart Fabila-Carrasco} %
\address{Department of Mathematics, University Carlos III de Madrid,
  Avda. de la Universidad 30, 28911. Leganés (Madrid), Spain.}
\email{jfabila@math.uc3m.es}
\author{Fernando Lled\'o} %
\address{Department of Mathematics, University Carlos III de Madrid,
  Avda. de la Universidad 30, 28911. Leganés (Madrid), Spain and Instituto de Ciencias Matemáticas (CSIC-UAM-UC3M-UCM), Madrid.}
\email{flledo@math.uc3m.es}
\author{Olaf Post} %
\address{Fachbereich 4 -- Mathematik, Universit\"at Trier, 54286
  Trier, Germany} \email{olaf.post@uni-trier.de}
\thanks{JSFC was supported by Spanish Ministry of Economy and
Competitiveness through project DGI MTM2014-54692-P}
\thanks{FLl was supported by Spanish Ministry of Economy and
Competitiveness through project DGI MTM2014-54692-P and the
\emph{Severo Ochoa} Program for Centers of Excellence in R\&D
(SEV-2015-0554).}
\thanks{OP acknowleges support from the EPSRC funded research network
  ``Analysis on Graphs''.}
\begin{document}
\begin{abstract}
  The aim of this article is to give a simple geometric condition that
  guarantees the existence of spectral gaps of the discrete Laplacian
  on periodic graphs.  For proving this, we analyse the discrete
  magnetic Laplacian (DML) on the finite quotient and interpret the
  vector potential as a Floquet parameter.  We develop a procedure of
  virtualising edges and vertices that produces matrices whose
  eigenvalues (written in ascending order and counting multiplicities)
  specify the bracketing intervals where the spectrum of the Laplacian
  is localised. We prove Higuchi-Shirai's conjecture for $\Z$-periodic
  trees and apply our technique in several examples like the
  polypropylene or the polyacetylene to show the existence of spectral
  gaps.
\end{abstract}

\subjclass[2010]{05C50, 47B39, 47A10, 05C65}

\keywords{Discrete magnetic Laplacian, spectral gaps on periodic
graphs, Laplacian on graphs, spectral ordering}

\maketitle

%
%
\section{Introduction}
\label{Introduction}
%

The analysis of Schr\"odinger operators (in particular Laplacians) and
their spectra on periodic structures is one of the most important
features in solid state physics.  Periodicity here means that there is
a discrete group $\Gamma$ (typically Abelian) acting on the underlying
structure, e.g., a manifold or a graph, with compact quotient that
commutes with the operator. Using Floquet theory, the analysis of the
operator can be reduced to the analysis of a related family of
operators on the quotient.  In the case of graphs, the family of
operators corresponds to a family of finite dimensional operators,
i.e., matrices.

Let $\Gt$ be a $\Gamma$-periodic discrete graph with quotient graph
$\G$.  The aim of the present article is to present simple geometric
conditions on $\G$ that imply that the Laplacian $\Delta^{\Gt}$ (with
standard, combinatorial or general periodic weights) on the infinite
periodic graph $\Gt$ has not the maximal possible interval as
spectrum.  Such Laplacians are said to have a \emph{spectral gap}. To
show the existence of spectral gaps we develop a purely discrete
bracketing technique based on \emph{virtualisation} of edges and
vertices on $\G$ for the discrete magnetic Laplacian on $\G$.  In the
context of periodic manifolds or metric graphs Dirichlet-Neumann
bracketing allows to localise the spectrum of the differential
operator in certain closed intervals whose end points are specified by
the Laplacian on a fundamental domain with Dirichlet or Neumann
boundary conditions (see,
e.g.,~\cite{lledo-post:08b,lledo-post:08,lledo-post:07} and references
cited therein).  Our method of virtualisation of edges and vertices
can be seen as a discrete version of Dirichlet-Neumann bracketing.

An important ingredient of our analysis is the discrete magnetic
Laplacian (DML). It is a natural generalisation of the usual Laplacian
and incorporates the presence of a magnetic field on the graph.
Typically the magnetic field enters in the analysis via a vector
potential, which is a function of the edges $\alpha\colon E\to
\R/2\pi\Z$. Discrete magnetic Laplacians on covering graphs with
general discrete group actions have already been treated,
e.g.,~in~\cite{sunada:94c} and~\cite{mathai-yates:02,msy:03}, also for
general periodic magnetic fields.  Magnetic Laplacians and Laplacians
on Abelian covering graphs are discussed also
in~\cite{higuchi-shirai:99,higuchi-shirai:04}.  Korotyaev and Saburova
treated discrete Schr\"odinger operators on discrete graphs in a
series of articles (see,
e.g.,~\cite{koro-sabu:14,koro-sabu:15,koro-sabu:17} and references
therein).

In~\cite{koro-sabu:15} the authors develop a bracketing technique
similar to the Dirichlet-Neumann bracketing mentioned before and
proved estimates of the position of the spectral bands for the
combinatorial Laplacian in terms of suitable Neumann and Dirichlet
eigenvalue intervals.  Moreover, they give an upper estimate of the
total band length in terms of these eigenvalues and some geometric
data of the graph; this method is extended in~\cite{koro-sabu:17} to
the case of magnetic Laplacians with periodic magnetic vector
potentials.  A method to open gaps in the spectrum --- completely
independent of the periodicity of the underlying graph --- was
presented by Schenker and Aizenman in~\cite{aizenman-schenker:00},
decorating each vertex of the original graph with a copy of a given
finite graph.

Periodic graphs can also be understood as covering graphs (see,
e.g.,~\cite{sunada:08,sunada:13}).  The existence of spectral gaps is
related with the \emph{full spectrum conjecture} stating that the
discrete Laplacian on a maximal Abelian covering $\pi \colon \Gt \to
\G=\Gt/\Gamma$ has maximal possible spectrum.  A covering is
\emph{maximal Abelian} if the covering group is $\Gamma=\Z^b$, where
$b=\Betti(\G)=|V(\G)|-|E(\G)|+1$ is the first Betti number of $\G$
(see~\cite[Chapter~6]{sunada:13} for details; as usual $V(\G)$ and
$E(\G)$ denote the set of vertices and edges of $\G$, respectively).
The conjecture states that the (standard) Laplacian on a maximal
Abelian covering has maximal possible spectrum $[0,2]$, i.e., no
spectral gaps (see~\cite[Conjecture~3.5]{higuchi-shirai:99}) --- at
least when there is no vertex of degree $1$. This conjecture was
partially solved by~\cite[Proposition~3.6 and 3.8]{higuchi-shirai:99}
for graphs where all vertices have \emph{even} degrees, and certain
regular graphs with odd degrees. Graphs with even vertex
degrees allow so-called \emph{Euler paths}, related to the famous
K\"onigsberg bridges problem due to Euler.  In addition, Higuchi and
Shirai~\cite{higuchi-shirai:04} also realised that one needs
additional conditions for the conjecture to be true; namely that there
is no vertex of degree $1$.  We confirm this conjecture for periodic
trees (i.e., when the quotient graph has Betti number $1$).  Moreover,
Higuchi and Nomura~\cite{higuchi-nomura:09} show that the normalised
Laplacian on a maximal Abelian covering graph of a finite even-regular
graph respectively odd-regular and bipartite graph has absolutely
continuous spectrum and no eigenvalues.

The article is structured as follows: in Section~\ref{sec:DML} we
recall basic definitions and results on discrete weighted graphs and
DMLs. The use of weighted graphs is particularly well adapted to the
virtualisation procedure described below, since, for example, the
virtualisation of an edge $e$ can also be understood by changing the
edge weight to $m_e=0$. In Section~\ref{sec:spectral_order} we develop
a discrete bracketing technique for finite weighted graphs which is
based on the manipulation of the (finite) fundamental domain of the
periodic discrete graph.  This technique is based on a selective
virtualisation of certain edges $E_0\subset E$ and vertices
$V_0\subset V$ of a given weighted graph $\W=(\G,m)$ with vector
potential $\alpha$ and weights $m$ on $V$ and $E$, respectively. The
process of edge and vertex virtualisation produces two different
graphs $\W^-=(\EG,\Em)$ respectively $\W^+=(\VG,\Vm)$ with induced
vector potentials and weights, such that the spectra of the
corresponding DMLs have the following relation:
\begin{equation*}
  \lambda_k(\Delta_{\EA}^{\EM}) 
  \leq \lambda_k(\Delta_\alpha^\W) 
  \leq \lambda_k(\Delta_{\VA}^{\VM}),
  \quad k=1,\dots, n-r,
\end{equation*}
where $n=|V(\G)|$ and $r$ is the number of virtualised vertices.  We
then say that $\Delta^{\EM}_{\EA}$ is \emph{spectrally smaller} than
$\Delta_\alpha^\W$ (resp.\ $\Delta_\alpha^\W$ is \emph{spectrally
  smaller} than $\Delta_{\VA}^{\VM}$) and write
\begin{equation*}
  \Delta^{\EM}_{\EA}  
  \preccurlyeq \Delta_\alpha^\W
  \preccurlyeq \Delta_{\VA}^{\VM}.
\end{equation*}
Virtualising the edges $E_0$ on which the vector potential is
supported and a corresponding set of vertices $V_0$ in the
neighbourhood of $E_0$ (see Section~\ref{sec:spectral_order} for
precise definitions) we are able to make the DMLs on $\G^\pm$
\emph{independent} of the vector potential $\alpha$.  Hence, also the
bracketing intervals
\begin{equation*}
  J_k
  :=[ \lambda_k(\Delta^{\EM}), \lambda_k(\Delta^{\VM})]
\end{equation*}
in which we are going to localise later the spectrum of the periodic
infinite Laplacian, are independent of the vector potential.

\begin{theorem*}[cf., Theorem~\ref{thm:technique}]
  Let $\W=(\G,m)$ be a finite weighted graph and $E_0\subset E(\G)$.  Then
  for any vector potential $\alpha$ supported in $E_0$ and any vertex
  set $V_0$ in the neighbourhood of $E_0$ we have
  \begin{equation}
    \Delta^{\EM}
    \preccurlyeq \Delta^{\W}_\alpha
    \preccurlyeq \Delta^{\VM},
  \end{equation}
  where $\EM=(\EG,\Em)$ with $\EG=\G-E_0$ and $\VM=(\VG,\Vm)$ with
  $\VG=\G - V_0$.  In particular, we have the spectral localising
  inclusion
  \begin{equation}
    \label{eq:loc.spec}
    \sigma(\Delta^{\W}_\alpha) 
    \subset  J
    := J(\Delta^{\EM},\Delta^{\VM})
    = \bigcup_{k=1}^{|V(\G)|} 
        \bigl[\lambda_k(\Delta^{\EM}),\lambda_k(\Delta^{\VM})\bigr].
   \end{equation}     
\end{theorem*}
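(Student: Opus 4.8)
The plan is to recast both inequalities as statements about the quadratic form $q_\alpha^\W$ associated with the DML $\Delta_\alpha^\W$ on the weighted space $\ell_2(V(\G),m)$ and then invoke the min-max principle. Recall from Section~\ref{sec:DML} that
\[
  q_\alpha^\W(f) = \sum_{e\in E(\G)} m_e\,\bigl|(d_\alpha f)(e)\bigr|^2,
\]
a sum of nonnegative terms, one per edge, in which the vector potential enters only through the edges on which it is supported. Since $\alpha$ is supported in $E_0$, every term with $e\notin E_0$ is independent of $\alpha$. Both virtualisations act on this sum transparently, and the two bounds will come from form monotonicity (for edges) and from restriction to a subspace (for vertices).

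For the lower bound I would argue as follows. Edge virtualisation replaces $m_e$ by $0$ for each $e\in E_0$, so the form $q^{\EM}$ of $\Delta^{\EM}$ is obtained from $q_\alpha^\W$ by deleting exactly the terms indexed by $E_0$. These deleted terms are nonnegative and carry all the $\alpha$-dependence; hence $q^{\EM}$ is $\alpha$-independent and $q^{\EM}(f)\le q_\alpha^\W(f)$ for every $f\in\ell_2(V(\G),m)$. As both forms live on the same Hilbert space, the min-max principle yields $\lambda_k(\Delta^{\EM})\le\lambda_k(\Delta_\alpha^\W)$ for every $k$, i.e.\ $\Delta^{\EM}\preccurlyeq\Delta_\alpha^\W$.

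For the upper bound I would use that vertex virtualisation amounts to a discrete Dirichlet condition on $V_0$. Extension by zero gives an isometric embedding $\iota\colon\ell_2(V(\VG),m)\hookrightarrow\ell_2(V(\G),m)$ onto the subspace $W$ of functions vanishing on $V_0$. The crucial point is the geometric hypothesis that $V_0$ lies in the neighbourhood of $E_0$, i.e.\ every edge of $E_0$ has at least one endpoint in $V_0$: for $f\in W$ that endpoint value is $0$, so each $E_0$-term of $q_\alpha^\W(\iota f)$ loses its phase factor and one checks $q_\alpha^\W(\iota f)=q^{\VM}(f)$, with $q^{\VM}$ independent of $\alpha$. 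Consequently $\Delta^{\VM}=\iota^\ast\Delta_\alpha^\W\,\iota$ is the compression of $\Delta_\alpha^\W$ to $W$, and the min-max characterisation over $k$-dimensional subspaces of $W$ (rather than of all of $\ell_2(V(\G),m)$) gives $\lambda_k(\Delta_\alpha^\W)\le\lambda_k(\Delta^{\VM})$ for $k=1,\dots,|V(\G)|-r$, where $r=|V_0|$. This is precisely $\Delta_\alpha^\W\preccurlyeq\Delta^{\VM}$.

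Finally, the localising inclusion~\eqref{eq:loc.spec} is a formal consequence: adopting the convention $\lambda_k(\Delta^{\VM})=+\infty$ for $k>|V(\G)|-r$, the two chains of inequalities place each eigenvalue $\lambda_k(\Delta_\alpha^\W)$ in the interval $[\lambda_k(\Delta^{\EM}),\lambda_k(\Delta^{\VM})]$, so $\sigma(\Delta_\alpha^\W)\subset J$. The step I expect to require the most care is the identity $q_\alpha^\W(\iota f)=q^{\VM}(f)$ in the upper bound: one must verify from the precise definition of ``neighbourhood of $E_0$'' in Section~\ref{sec:spectral_order} that vanishing on $V_0$ indeed annihilates the phase factor $\e^{\im\alpha_e}$ on every edge of $E_0$ (keeping track of edges with both endpoints, or only one endpoint, in $V_0$), and that the induced weights $\Vm$ on $\VG$ are exactly those produced by this restriction.
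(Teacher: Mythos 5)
Your proposal is correct and is essentially the paper's own argument: the lower bound is exactly Proposition~\ref{VirtualEdges} (quadratic-form monotonicity plus the min--max principle, with the support condition forcing $\EA=0$ on $\EG$), and the upper bound is exactly Proposition~\ref{DMDL}, since your min--max over $k$-dimensional subspaces of the Dirichlet subspace $W$ is precisely the half of Cauchy's interlacing theorem that the paper cites for the compression $\Delta^{\VM}_{\VA}=\iota^*\Delta^\W_\alpha\,\iota$. The only cosmetic difference is how the $\alpha$-independence of the upper operator is established: you verify the form identity $q^\W_\alpha(\iota f)=q^{\VM}(f)$ directly (vanishing at an endpoint in $V_0$ kills the phase inside the modulus, for both dangling edges and edges interior to $V_0$), whereas the paper instead gauges the potential away on the connecting edges --- two equivalent one-line verifications of the same fact.
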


In Section~\ref{sec:MSG} we introduce the notion of \emph{magnetic
  spectral gaps set} which in the case of standard weights is given by
\begin{equation*}
  \MSG^\W
  =[0,2] \setminus \UVP \sigma(\Delta^\W_\alpha),
\end{equation*}
where $\mathcal A(\G)$ denotes the set of all vector potentials on
$\G$. In other words, the magnetic spectral gap set is the
intersection of all resolvent sets of the DML $\Delta^\W_\alpha$ for
all $\alpha \in \mathcal A(\G)$, with the set $[0,2]$. If $\G$ is a
tree then $\MSG^\W$ coincides with the spectral gaps set $\SG^\W$ of
the usual Laplacian $\Delta^\W$ with $\alpha=0$. In
Theorem~\ref{TheoremSpectralGaps2} we give a sufficient geometrical
condition on the weighted graph such that the set of magnetic spectral
gaps is non-empty. As a consequence of this result we give several
characterisations of $\MSG^\W\not=\emptyset$ for finite weighted
graphs with Betti number $1$ (cf.,
Corollary~\ref{cor:spectral_gap}). We also present here several
examples of finite graphs with nonempty magnetic spectral gaps set and
show spectral localisation of the spectrum of the DMLs in the
bracketing intervals.

In Section~\ref{sec:periodic} we introduce first basic notation and
results on $\Gamma$-periodic graphs $\Gt$ with finite quotient $\G=\Gt
/\Gamma$ and Abelian discrete group $\Gamma$.  In particular, we
remind a discrete version of Floquet theory and interpret the vector
potential $\alpha$ on $\G$ as a Floquet parameter. Applying then
results of the previous sections to the finite quotient we prove of
Higuchi-Shirai's conjecture for $\Z$-periodic trees, i.e., we prove
the following result:
 \begin{theorem*}[cf., Theorem~\ref{theo:FSP}]
   Let $\Wt =(\Gt,\m)$ be a $\Z$-periodic tree with standard or
   combinatorial weights and quotient graph $\W=(\G,m)$.  Then the
   following conditions are equivalent:
   \begin{enumerate}
   \item $\Wt$ has the full spectrum property;
   \item $\SG^{\Wt}=\emptyset$;
   \item $\Wt$ is the lattice $\Z$;
   \item $\MSG^{\W}=\emptyset$;
   \item $\G$ is a cycle graph;
   \item $\G$ has no vertex of degree $1$.
   \end{enumerate}
 \end{theorem*}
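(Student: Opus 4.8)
The plan is to split the six statements into a spectral group $\{$(a),(b),(d)$\}$ and a geometric group $\{$(c),(e),(f)$\}$, prove each group internally via full equivalences, and then join them with one implication in each direction, closing the cycle (f)$\Rightarrow$(e)$\Rightarrow$(c)$\Rightarrow$(a)$\Rightarrow$(b)$\Rightarrow$(d)$\Rightarrow$(f). First I would record the two essentially formal links. The equivalence (a)$\iff$(b) is immediate from the definition $\SG^{\Wt}=[0,2]\setminus\sigma(\Delta^{\Wt})$, since the full spectrum property for standard or combinatorial weights means precisely that $\sigma(\Delta^{\Wt})$ fills the maximal possible interval, i.e.\ $\SG^{\Wt}=\emptyset$. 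For (b)$\iff$(d) I would invoke the discrete Floquet theory of Section~\ref{sec:periodic}: for a $\Z$-periodic graph the spectrum of $\Delta^{\Wt}$ equals $\bigcup_{\alpha\in\mathcal{A}(\G)}\sigma(\Delta^{\W}_\alpha)$, with the vector potential $\alpha$ playing the role of the Floquet parameter. Since $\Gt$ is a tree acted on by $\Gamma=\Z$, the quotient $\G$ is connected with $\Betti(\G)=1$, so the gauge classes of vector potentials form a single circle $\R/2\pi\Z$, matching the dual $\widehat{\Z}$ exactly. Hence $\SG^{\Wt}=\MSG^{\W}$ and (b)$\iff$(d).

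Next I would settle the geometric group. For (e)$\iff$(f), using $\Betti(\G)=1$ one has $|E(\G)|=|V(\G)|$, so the degree sum is $2|E(\G)|=2|V(\G)|$; if no vertex has degree $1$ then the minimum degree is $\ge 2$, which forces every degree to equal $2$, and a connected $2$-regular graph is a cycle. The converse is trivial since all vertices of $C_n$ have degree $2$. For (c)$\iff$(e) I would use covering theory: the maximal Abelian (here $\Z$) covering of the cycle $C_n$ is the bi-infinite path, i.e.\ the lattice $\Z$, and conversely if the tree $\Gt$ is the lattice $\Z$ its $\Z$-quotient must be a cycle.

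It then remains to connect the two groups. The easy direction is (c)$\Rightarrow$(a): a direct Floquet computation shows the standard or combinatorial Laplacian on the lattice $\Z$ has gapless spectrum filling the maximal interval, so $\Wt$ has the full spectrum property. The substantial direction is (d)$\Rightarrow$(f), which I would prove in contrapositive form $\neg$(f)$\Rightarrow\neg$(d): assuming $\G$ has a vertex of degree $1$, I must produce a nonempty magnetic spectral gap. Here I would apply the virtualisation/bracketing machinery of Theorem~\ref{thm:technique}, taking $E_0$ to be the single edge of $\G$ supporting the holonomy around the unique cycle and $V_0$ an adjacent vertex, so that $\Delta^{\EM}$ and $\Delta^{\VM}$ become $\alpha$-independent forests. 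The localisation inclusion then gives $\bigcup_{\alpha}\sigma(\Delta^{\W}_\alpha)\subset J$ with $J$ independent of $\alpha$, whence $\MSG^{\W}\supset[0,2]\setminus J$; matching the resulting geometric condition against the Betti-one characterisation of Corollary~\ref{cor:spectral_gap} and the sufficient criterion of Theorem~\ref{TheoremSpectralGaps2} yields $\MSG^{\W}\neq\emptyset$ precisely when a pendant vertex is present.

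The main obstacle is exactly this last implication: one must verify that for a non-cyclic Betti-one graph the bracketing intervals $J_k=[\lambda_k(\Delta^{\EM}),\lambda_k(\Delta^{\VM})]$ genuinely fail to tile $[0,2]$, i.e.\ that $\lambda_k(\Delta^{\VM})<\lambda_{k+1}(\Delta^{\EM})$ for some $k$. The pendant vertex is what decouples a Dirichlet-type eigenvalue and opens the gap, and I expect the delicate point to be the eigenvalue-interlacing bookkeeping inherited from the virtualisation theorem rather than any conceptual difficulty. Assembling the chain (f)$\Rightarrow$(e)$\Rightarrow$(c)$\Rightarrow$(a)$\Rightarrow$(b)$\Rightarrow$(d)$\Rightarrow$(f) then closes the equivalence.
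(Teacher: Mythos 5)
Your proposal is correct and takes essentially the same route as the paper: in both cases the heart of the argument is the identity $\SG^{\Wt}=\MSG^{\W}$ obtained from maximal-Abelian-covering Floquet theory (Proposition~\ref{prp:floq.mag} and Remark~\ref{remark:z-periodic}, valid here precisely because a connected $\Z$-periodic tree forces $\Betti(\G)=1$), combined with the Betti-one characterisation of Corollary~\ref{cor:spectral_gap}, whose proof via the trace computation in Theorem~\ref{TheoremSpectralGaps2} already supplies exactly the eigenvalue-bracketing bookkeeping you flag as the delicate point. Your explicit treatments of (f)$\Rightarrow$(e) by degree counting, (c)$\Leftrightarrow$(e) by covering theory, and (c)$\Rightarrow$(a) by direct computation on the lattice $\Z$ are correct fill-ins of steps the paper leaves implicit (with Remark~\ref{rem:Gaps} covering the combinatorial-weight case), not a genuinely different method.
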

 Finally, we also apply the bracketing method to show that the
 spectrum of the Laplacian $\Delta^{\Gt}$ is localised in the union of
 the bracketing intervals. This gives a sufficient condition for the
 existence of spectral gaps in the spectrum of $\Delta^{\Gt}$.

In Section~\ref{sec:examples} we apply the methods developed to
several classes of examples of periodic graphs.  The first example
gives simple verification of results by Suzuki in~\cite{suzuki:13} on
the existence of spectral gaps for $\Z$-periodic graphs with pendants.
The other examples are more elaborate and include modelisations of
polypropylene and polyacetylene molecules. We prove also the existence
of spectral gaps in these periodic structures. The last example can be
understood as an intermediate covering of the graphene where the
quotient has Betti number $2$.

\subsection*{Notation}
We denote a weighted graph by $\W=(\G,m)$ where $\G=( V, E, \bd )$ is
an oriented graph and $m$ a weight on the vertices $V$ and edges
$E$. For a vector potential $\alpha$, the operator $\Delta_\alpha^\G$
denotes the discrete magnetic Laplacian (DML) with standard weights
and $\Delta_\alpha^\W$ corresponds to the DML to denote a generic
weighted graph. We use $\Delta^\G$, $\MSG^\G$ and $\SG^\G$ to denote
the usual Laplacian, the set of magnetic spectral gaps and the set of
spectral gaps with standard weights, respectively, and $\Delta^\W$,
$\MSG^\W$ and $\SG^\W$ for the corresponding objects with generic
weights. We denote $\Wt=(\Gt,\m)$ a periodic weighted graph.

\subsection*{Acknowledgements}
We would like to thank Pavel Exner for encouraging us to extend the
previous article on bracketing techniques in~\cite{lledo-post:08b}
also to \emph{magnetic} Laplacians.  We would also like to thank the
anonymous referee for carefully reading our manuscript and for his
useful suggestions.

%
%
\section{Discrete graphs and discrete magnetic Laplacians}
\label{sec:DML}
%

In this section we introduce the necessary background on discrete graphs and Laplacians
that will be used later.

\subsection{Graphs and subgraphs}
\label{ssec:graphs}

In this article $\G=( V, E, \bd)$ denotes a (discrete) directed graph,
i.e., $V=V(\G)$ is the set of vertices, $E=E(\G)$ the set of edges and
$\bd \colon E\rightarrow V\times V$ is the orientation map.  Here,
$\bd e=(\bd_-e,\bd_+e)$ denotes the pair of the initial
and terminal vertices.  We allow graphs with multiple edges, i.e.,
edges $e_1\neq e_2$ with $(\bd_-e_1,\bd_+e_1 ) =(
  \bd_-e_2,\bd_+e_2 )$ or $(\bd_-e_1,\bd_+e_1 )
=( \bd_+e_2,\bd_-e_2 )$ and loops, i.e., edges $e_1$ with
$\bd_-{e_1}=\bd_+{e_1}$. We define
\begin{equation*}
  E_v:=E_v^+ \dcup E_v^- \quadtext{(disjoint union), where}
  E^\pm_v :=\lbrace e\in E \mid v=\bd_\pm e\rbrace  \;.
\end{equation*}
The degree of a vertex is $\deg(v)=|E_v|$; note that, since $E_v$ is defined in terms of a disjoint union,
a loop increases the degree by $2$. 

For subsets $A,B \subset V$, denote by
\begin{equation*}
  E^+(A,B) := \lbrace e \in E \mid \bd_-e \in A, \bd_+e \in B\rbrace
  \quadtext{and}
  E^-(A,B) := E^+(B,A).
\end{equation*}
In particular, $E^\pm_v = E^\pm(V,\{v\})$.  Moreover, we set
\begin{equation*}
  E(A,B) := E^+(A,B) \cup E^-(A,B) 
  \qquadtext{and}
  E(A):= E(A,A).
\end{equation*}
To simplify the notation, we write $E(v,w)$ instead of
$E(\{v\},\{w\})$ etc.  Note that loops are not counted double in
$E(A,B)$, in particular, $E(v):=E(v,v)$ is the set of loops based at
the vertex $v\in V$.
The \emph{Betti number} $\Betti(\G)$ of a finite graph $\G=( V, E,
  \bd)$ is defined as
\begin{equation}
  \label{eq:betti}
  \Betti(\G):=|E|-|V|+1.
\end{equation}

When we analyse in the next sections virtualisation processes of
vertices, edges and fundamental domains in periodic graphs, it will be
convenient to consider the following substructure of a graph.

\begin{definition}
  \label{def:subgraphs}
  Let $\G=(V,E,\bd)$ be a discrete graph and
  $\subG=(V_0,E_0,\bd_0)$ be a triple such that $V_0 \subset V$, $E_0
  \subset E$ and $\bd_0 =\bd \restriction_{E_0}$.
  \begin{enumerate}
  \item 
    \label{subgraphs.a}
    If $E_0 \cap E(V \setminus V_0) = \emptyset$, we say that $\subG$
    is a \emph{partial subgraph} in $\G$.  We call
    \begin{align}
      \nonumber
      B(\subG,\G) 
      :=& E(V_0,V \setminus V_0)\\
      \label{eq:def.bridges}
      =&\lbrace e \in E 
        \mid \bd_-e \in V_0, \bd_+e \in V \setminus V_0 \text{ or }
             \bd_+e \in V_0, \bd_-e \in V \setminus V_0
      \rbrace
    \end{align}
    the set of \emph{connecting edges} of the partial subgraph $\subG$
    in $\G$.
  \item 
    \label{subgraphs.b}
    If $E_0 \subset E(V_0)$, we say that $\subG$ is a \emph{subgraph} of $\G$. 

  \item 
    \label{subgraphs.c}
    If $E_0 = E(V_0)$, we say that $\subG$ is an \emph{induced
      subgraph} of $\G$.
	
  \item 
      \label{subgraphs.d}
      If $\subG$ is a \emph{subgraph} of $\G$ with $V_0=V(\G)$ such that
      $\subG$ is connected and has no cycles, we say that $\subG$ is
      an \emph{induced tree} of $\G$.
  \end{enumerate}
\end{definition}
\begin{remark}
  \indent
  \begin{enumerate}
  \item Note that a partial subgraph $\subG=(V_0,E_0,\bd_0)$ is not a
    graph as defined in Section~\ref{ssec:graphs}, since we do not
    exclude edges $e \in E$ with $\bd_\pm e \notin V_0$; we only
    exclude the case that $\bd_+e \notin V_0$ \emph{and} $\bd_-e
    \notin V_0$.  The edges not mapped into $V_0 \times V_0$ under
    $\bd_0$ are precisely the connecting edges of $\subG$ in $\G$.  In
    other words, we only have $\bd_0 \colon B(\subG,\G) \rightarrow V
    \times V$, but $\bd_0 \colon E_0\setminus B(\subG,\G) \rightarrow
    V_0 \times V_0$.
  \item In contrast, for a subgraph or an induced subgraph, $\subG$ is
    itself a discrete graph as $E_0 \subset E(V_0)$ and hence $\bd_0$
    maps $E_0$ into $V_0 \times V_0$.  Moreover, a subgraph (or an
    induced subgraph) has no connecting edges in $\G$.
  \end{enumerate}
\end{remark}

\subsection{Weighted discrete graphs}
\label{ssec:weight}

Let $\G=(V,E,\bd)$ be a discrete graph.  A \emph{weight} on $\G$ is a
pair of functions $m$ on the vertices and edges
$m\colon V\rightarrow (0,\infty)$ and $m\colon E\rightarrow (0,\infty)$
associating to a vertex $v$ its weight $m(v)$ and to an edge $e$ its weight 
$m_e$.\footnote{Later, the virtualization process of edges can also be 
interpreted allowing $m_e=0$ on certain edges $e$.}

We call $\W=(\G,m)$ a weighted discrete graph.  Now, it is natural to
define $m(E_0)=\sum_{e \in E_0} m_e$ for any $E_0 \subset E$.  The
\emph{relative weight} is $\rho \colon V\rightarrow (0,\infty)$
defined as
\begin{subequations}
  \label{eq:rel.weight}
  \begin{equation}
    \label{eq:rel.weight.a}
    \rho(v)
    :=\dfrac{m(E_v)}{m(v)}
    =\dfrac{m(E_v^+)+m(E_v^-)}{m(v)}.
  \end{equation}
  If we need to stress the dependence of $\rho$ of the weighted graph,
  we simply write $\rho^\W$.  We will assume throughout this article
  that the relative weight is uniformly bounded, i.e.,
  \begin{equation}
    \label{eq:rel.weight.b}
    \rho_\infty:=\sup_{v\in V} \rho(v)<\infty.
  \end{equation}
\end{subequations}
This condition will ensure later on that the discrete magnetic
Laplacian is a bounded operator.

Examples of commonly used weights are the following:
\begin{center}
  \begin{tabular}{|l|c|c|c|c|}
    \hline 
    \textbf{Weight name}  &  $m_e$& $m(v)$ &$\rho(v)$&$\rho_\infty$\\ 
    \hline 
    \emph{standard}\index{Standard weight} & $1$ & $\deg v$&$1$ &$1$ \\ 
    \hline \emph{combinatorial}\index{Combinatorial weight}& $1$ &  $1$ 
    &$\deg v$ &$\sup_{v\in V} \deg v$ \\[1ex] 
    \hline \emph{normalised}\index{Normalised weight} & $m_e$ & 
    $ m (E_v)$ & $1$ &$1$ \\[1ex] 
    \hline  \emph{electric circuit}\index{Electric Circuit} & $m_e$ &  $1$& 
    $ m (E_v)$&  
    $\sup_{v\in V} m (E_v)$\\
    \hline
  \end{tabular} 
\end{center}
Note that the first two weights are \emph{intrinsic}, i.e., they can
be calculated just by the graph data, while the last two need the
additional information of an edge weight $m \colon E \to (0,\infty)$.

To a weighted graph $\W=(\G,m)$ we associate the following two
Hilbert spaces
\begin{align*}
  \ell_2(V,m)
  &:=\Bigl\lbrace f\colon V\rightarrow \C \mid
    \left\| f \right\|_{V,m}^2=\sum_{v \in V} | f(v)|^2 m(v) < \infty  \Bigr\rbrace
  \qquad\text{and}\\
  \ell_2(E,m)
  &:=\Bigl\lbrace \eta\colon E\rightarrow \C \mid
    \left\| \eta \right\|_{E,m}^2=\sum_{e \in E} | \eta_e|^2 m_e < \infty 
      \Bigr\rbrace,
\end{align*}
with inner products
\begin{equation*}
  \left\langle f,g\right\rangle_{\ell_2(V,m)}
  =\sum_{v \in V} {f(v)}  \overline{g(v)}  m(v)
  \quadtext{and} 
  \left\langle \eta,\zeta \right\rangle_{\ell_2(E,m)}
  =\sum_{e \in E} \eta_e  \overline{\zeta_e}  m_e,
\end{equation*}
respectively.  These spaces can be interpreted as $0$- and $1$-forms
on the graph, respectively.

\subsection{Discrete magnetic Laplacian}\label{ssec:DML}

Let $\W=(\G,m)$ a weighted graph. A \emph{vector potential} $\alpha$
acting on $\G$ is a $\Torus$-valued function on the edges as follows, $\alpha\colon
E(\G)\rightarrow \Torus=\R/2\pi\Z.$ We denote the set of all vector 
potentials on $E(\G)$ just by $\mathcal{A}(\G)$.
We say that two vector potentials
$\alpha_1$ and $\alpha_2$ are \emph{cohomologous}, and denote this as
$\alpha_1 \sim \alpha_2$, if there is $\varphi\colon V\rightarrow
\Torus$ with
\begin{equation*}
  \alpha_1 = \alpha_2 + d\varphi.
\end{equation*}
Given a $E_0 \subset E(\G)$, we say that a vector potential $\alpha$
has support in $E_0$ if $\alpha_e=0$ for all $e\in E(\G)\setminus
E_0$. 

It can be shown that any vector potential on a finite graph can be
supported in $\Betti(\G)$ many edges. In fact, let $\G$ a finite graph
with $\alpha$ a vector potential acting on it and let $\T$ an induced
tree of $\G$. Then we can show that there exists a vector potential
$\alpha'$ with support in $E(\G)\setminus E(\T)$ such that $\alpha\sim
\alpha'$. In particular, if $\G$ is a cycle, any vector potential is
cohomologous to a vector potential supported in only one edge.
Moreover, if $\G$ is a tree any vector potential on a tree is
cohomologous to $0$.

The \emph{twisted (discrete) derivative} is the operator between
$0$-forms and $1$-forms given by
\begin{equation}
  \label{eq:twist.der}
  d_\alpha\colon \ell_2(V,m) \rightarrow \ell_2(E,m) 
  \qquadtext{with}
  \left( d_\alpha f \right)_e
  =\e^{\im \alpha_e/2}f(\bd_+e) - \e^{-\im \alpha_e/2} f(\bd_-e).
\end{equation}  

\begin{definition}
  Let $\W=(\G,m)$ be a weighted graph with $\alpha$ a vector
  potential. The \emph{discrete magnetic Laplacian (DML)}
  $\Delta_\alpha  \colon
  \ell_2(V)\rightarrow \ell_2(V)$ is defined by
  $\Delta_\alpha=d_\alpha^*d_\alpha$, i.e., by
  \begin{equation*}
    \left( \Delta_\alpha f\right) \left(v \right) 
    =\rho(v)f(v)-\dfrac1{m(v)}\sum_{e\in E_v}  {\e^{\im \orient \alpha_e(v)}}f(v_e) m_e,
  \end{equation*}
  where $\orient \alpha_e(v)$ resp.\ $v_e$ is the oriented evaluation
  resp.\ opposite vertex of $v$ along the edge $e$, i.e.,
  \begin{equation*}
    \orient \alpha_e(v)
    = 
    \begin{cases}
      -\alpha_e,  & \text{if $v=\bd_-e$,}\\
      \alpha_e,  & \text{if $v=\bd_+e$,}
    \end{cases}
    \quadtext{resp.}
    v_e
    = 
    \begin{cases}
      \bd_+e,  & \text{if $v=\bd_-e$,}\\
      \bd_-e   & \text{if $v=\bd_+e$.}
    \end{cases}
  \end{equation*}
  If we need to stress the dependence on the weighted graph
  $\W=(\G,m)$ we will denote the DML as $\Delta_\alpha^\W$.
\end{definition}

The DML $\Delta_\alpha$ is a bounded, positive and self-adjoint
operator and its spectrum satisfies $\sigma(\Delta_\alpha)\subset
[0,2\rho_\infty]$.  Unlike the usual Laplacian without magnetic
potential, the DML \emph{does} depend on the orientation of the graph.
If $\alpha\sim \alpha'$, then $\Delta_{\alpha}$ and $\Delta_{\alpha'}$
are unitary equivalent; in particular,
$\sigma(\Delta_\alpha)=\sigma(\Delta_{\alpha'})$. Indeed, if
$\alpha'=\alpha+d\varphi$ then it is straightforward to check that the
unitary operator $(Uf)(v)=\e^{\varphi(v)}f(v)$ intertwines between
both DMLs.  In particular, if $\alpha\sim 0$ then $\Delta_\alpha\cong
\Delta$ where $\Delta$ denotes the discrete Laplacian with vector
potential $0$, i.e., the usual discrete Laplacian on $(\G,m)$.  For
example, if $\W=(\G,m)$ with $\G$ being a tree, then
$\Delta_\alpha^{\W} \cong \Delta^{\W}$ for any vector potential.

If the graph $G=(V,E,\bd)$ is bipartite (i.e., there is a partition
$V=A \dcup B$ such that $E=E(A,B)$, we have the following spectral
symmetry (see, e.g.,~\cite[Prp.~2.3]{lledo-post:08}):
\begin{proposition}
  \label{prp:bipartite.sym}
  Assume that $\W=(\G,m)$ is a weighted graph with bipartite graph
  $\G$ and normalised weight $m$.  Then the spectrum of $\Delta^\W$
  (with vector potential $\alpha=0$) is symmetric with respect to the
  map $\kappa \colon \R \to \R$, $\kappa(\lambda)=2-\lambda$, i.e.,
  \begin{equation*}
    \kappa(\sigma(\Delta^\W))=\sigma(\Delta^\W).
  \end{equation*}
  In particular, if $J \subset [0,2]$ fulfils $\sigma(\Delta^\W)
  \subset J$, then we have the inclusion
  \begin{equation*}
    \sigma(\Delta^\W) \subset J \cap \kappa(J).
  \end{equation*}
\end{proposition}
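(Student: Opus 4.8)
The plan is to exhibit an explicit self-adjoint involution on $\ell_2(V,m)$ that conjugates $\Delta^\W$ into $2\,\mathrm{Id}-\Delta^\W$, which immediately yields the spectral symmetry $\kappa(\sigma(\Delta^\W))=\sigma(\Delta^\W)$. First I would use the bipartite partition $V=A\dcup B$ with $E=E(A,B)$ to define the \emph{sign operator} $S\colon\ell_2(V,m)\to\ell_2(V,m)$ by
\begin{equation*}
  (Sf)(v)=
  \begin{cases}
    +f(v), & v\in A,\\
    -f(v), & v\in B.
  \end{cases}
\end{equation*}
Since $S$ merely multiplies each coordinate by $\pm1$, it is unitary (indeed $S=S^*=S^{-1}$) with respect to the weighted inner product $\langle\cdot,\cdot\rangle_{\ell_2(V,m)}$, independently of the choice of vertex weight $m(v)$.

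The key computation is to show $S\,\Delta^\W\,S = 2\,\mathrm{Id}-\Delta^\W$, equivalently $S\Delta^\W S + \Delta^\W = 2\,\mathrm{Id}$. For the normalised weight we have $\rho(v)=1$, so the DML with $\alpha=0$ reads
\begin{equation*}
  (\Delta^\W f)(v)=f(v)-\frac{1}{m(v)}\sum_{e\in E_v} f(v_e)\,m_e.
\end{equation*}
The crucial point is that the diagonal term $\rho(v)f(v)=f(v)$ commutes with $S$, while the off-diagonal (hopping) term picks up a sign. Indeed, because $\G$ is bipartite with $E=E(A,B)$, every edge $e\in E_v$ joins $v$ to an opposite vertex $v_e$ lying in the \emph{other} part; hence $S$ assigns $v$ and $v_e$ opposite signs, and conjugating the hopping term by $S$ flips its overall sign. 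A direct evaluation of $(S\Delta^\W S f)(v)$ therefore gives $f(v)+\frac{1}{m(v)}\sum_{e\in E_v} f(v_e)\,m_e$, so that adding this to $(\Delta^\W f)(v)$ cancels the hopping sums and leaves $2f(v)$, as required. This establishes the unitary equivalence $\Delta^\W\cong 2\,\mathrm{Id}-\Delta^\W$, whence $\lambda\in\sigma(\Delta^\W)$ if and only if $2-\lambda\in\sigma(\Delta^\W)$, i.e.\ $\kappa(\sigma(\Delta^\W))=\sigma(\Delta^\W)$.

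For the final inclusion, suppose $J\subset[0,2]$ satisfies $\sigma(\Delta^\W)\subset J$. Applying $\kappa$ and using that $\kappa$ preserves the spectrum gives $\sigma(\Delta^\W)=\kappa(\sigma(\Delta^\W))\subset\kappa(J)$; intersecting the two containments yields $\sigma(\Delta^\W)\subset J\cap\kappa(J)$. The main (and essentially only) obstacle is verifying the sign-flip of the hopping term cleanly, being careful that loops play no role here: a loop based at $v\in A$ would connect $v$ to itself within the same part, contradicting $E=E(A,B)$, so bipartiteness forbids loops and the argument that every neighbour lies in the opposite part goes through without exceptions. The normalisation hypothesis is used only to guarantee $\rho(v)\equiv 1$, which is what makes $2\,\mathrm{Id}$ — rather than a $v$-dependent diagonal — the correct reflection point.
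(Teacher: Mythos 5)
Your proof is correct, and it is the standard argument: the paper itself offers no internal proof of this proposition but cites \cite[Prp.~2.3]{lledo-post:08}, where the same idea is used, namely conjugating $\Delta^\W$ by the sign involution $S$ (equal to $+1$ on $A$ and $-1$ on $B$) to obtain $S\Delta^\W S=2\,\mathrm{Id}-\Delta^\W$, which is exactly what you do. Your handling of the two side issues (unitarity of $S$ in the weighted inner product, and the absence of loops forced by $E=E(A,B)$) and the final intersection argument $\sigma(\Delta^\W)\subset J\cap\kappa(J)$ are all in order.
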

Note that the set $J \cap \kappa(J)$ becomes smaller than $J$ if $J$
is \emph{not} symmetric with respect to $\kappa$.

\subsection{Matrix representation of the DML}
\label{MatrixDML}

For computing the eigenvalues of the \emph{DML} it is convenient to work
with the associated matrix.  Given a finite weighted graph $(\G,m)$
with vector potential $\alpha$.  Consider a numbering of the vertices as
$V(\G)=\left\lbrace v_1,v_2, \dots ,v_n\right\rbrace $.  Then
$\left\lbrace \varphi_{v_i} \right\rbrace_{i=1}^n \subset
\ell_2(V,m)$ with $\varphi_{v_i} =m(v_i)^{-1/2} \1_{\left\lbrace
    v_i\right\rbrace}$ is an orthonormal basis of $\ell_2(V,m)$.  The
matrix representation of $\Delta_\alpha$ with respect to this
orthonormal basis is given by

\begin{equation*}
  \left[ \Delta_\alpha\right] _{jk}
  =\begin{cases} 
    \displaystyle \rho(v_j)-\frac 1 {m(v_j)} \sum_{e\in E(v_j,v_j)} 
     \Bigl(\underbrace{\e^{\im \alpha_e}+\e^{- \im \alpha_e}}_{=2\cos \alpha_e}
     \Bigr)  m_e, & \text{if $j=k$,}\\[2ex]
  \displaystyle -\frac 1 {\sqrt{m(v_j)m(v_k)}}  
     \Bigl(\sum_{e\in E^+(v_j,v_k)} \e^{\im  \alpha_e} m_e 
        +  \sum_{e\in E^-(v_j,v_k)} \e^{-\im  \alpha_e} m_e  
     \Bigr), & \text{if $ v_j \sim v_k$,}  \\[2ex]
    0 & \text{otherwise.}
  \end{cases}
\end{equation*}
where $v_j \sim v_k$ meaning that $v_j$ and $v_k$ are connected by an edge. Note that this formula includes the case of graphs with multiple edges and loops.

%
\section{Spectral ordering on finite graphs}
\label{sec:spectral_order}
%

In this section we will introduce one spectral ordering and two
operation on the graphs that will be needed later to develop a
discrete bracketing technique and show the existence of spectral gaps
for Laplacians on periodic graphs. Korotyaev and Saburova also present
a discrete bracketing technique in~\cite{koro-sabu:15} for
combinatorial weights. Their Dirichlet upper bound of the bracketing
is similar to the one we use here (vertex-virtualised). For the lower
bound Korotyaev and Saburova use a Neumann type boundary condition
while we propose an alternative edge virtualization process using the fact that we
work with arbitrary weights. In our approach the virtualisation is
done is such a way that vector potential on the resulting graphs
(deleting edges and deleting vertices) is cohomologous to zero.

Let $\W=(\G,m)$ a weighted graph.  Throughout this section, we will
assume that $|V(\G)|<\infty$.  If $|V(\G)|=n$, then we denote the spectrum
of the \emph{DML}  by $\sigma(\Delta^\W_\alpha):=\{ \lambda_k(\Delta^\W_\alpha) \mid
k=1,\dots,n\}$, where we will write the eigenvalues in ascending order
and repeated according to their multiplicities, i.e.,
\begin{equation*}
  0 \leq 
  \lambda_1(\Delta^\W_\alpha) 
  \leq \lambda_2(\Delta^\W_\alpha)
  \leq  \cdots \leq \lambda_n(\Delta^\W_\alpha).
\end{equation*}

\begin{definition}\label{def:PO}
  Let $S^-$ and $S^+$ be self-adjoint operators on $n^-$- respectively
  $n^+$-dimensional Hilbert spaces and consider the eigenvalues written
  in ascending order and repeated according to their
  multiplicities. We say that $S^-$ is \emph{spectrally smaller} than
  $S^+$ (denoted by $S^- \preccurlyeq S^+$), if
  \begin{equation*}
    n^- \ge n^+
    \qquadtext{and if} 
    \lambda_k(S^-)\leq \lambda_k (S^+) 
    \quadtext{for all}
    1\leq k \leq n^+.
  \end{equation*}
\end{definition}
Assume now that all operators have spectrum (i.e., eigenvalues) in
$[0,2\rho_\infty]$ for some number $\rho_\infty>0$.  If we set
$\lambda_k(S^+)=2\rho_\infty$ for $k= n^++1, \dots, n^-$ (the maximal
possible eigenvalue), then we can replace the eigenvalue estimate in
the previous definition by
\begin{equation*}
  \lambda_k(S^-)\leq \lambda_k (S^+) 
  \quadtext{for all}
  1\leq k \leq n^-.
\end{equation*}
Note also that the relation $\preccurlyeq$ is invariant under unitary conjugation of the operator.
\begin{definition}
  \label{def:brack.int}
  For operators $S^-$ and $S^+$ with $S^- \preccurlyeq S^+$ we define
  the \emph{associated $k$-th bracketing interval} $J_k=J_k(S^-,S^+)$
  by
  \begin{equation}
    \label{eq:brack.int}
    J_k := \bigl[\lambda_k(S^-),\lambda_k(S^+)\bigr]
  \end{equation}
  for $k=1,\dots, n^-$.
\end{definition}

If now $T$ is an operator with $S^- \preccurlyeq T \preccurlyeq S^+$,
then we have the following \emph{eigenvalue bracketing}
\begin{equation}
  \label{eq:ev.brack}
  \lambda_k(T) \in J_k
\end{equation}
for all $k=1,\dots, n^-$.  Moreover,
\begin{equation}
  \label{eq:ev.brack.all}
  \sigma(T) \subset J:= \bigcup_{k=1}^{n^-} J_k
\end{equation}
and we call $J=J(S^-,S^+)$ the \emph{spectral localising set} of the
pair $S^-$ and $S^+$.

The key observation for detecting spectral gaps from the eigenvalue
bracketing is the following:
\begin{proposition}
  \label{prp:key-obs}
  Let $S^-$ and $S^+$ be operators on $n^-$, respectively $n^+$, 
  finite dimensional Hilbert spaces
  with spectrum in $[0,2\rho_\infty]$.  Let $\mathcal T$ be a subset of the set
  of operators $T$ on a finite dimensional Hilbert spaces with $S^-
  \preccurlyeq T \preccurlyeq S^+$, then
    \begin{equation*}
      \mu\Bigl( \bigcup_{T \in \mathcal T} \sigma(T)
      \Bigr) \le \Tr(S^+)-\Tr(S^-) + 2\rho_\infty(n^--n^+)
    \end{equation*}
where $\mu$ denotes the $1$-dimensional Lebesgue measure.  In
    particular, if $n^--n^+=1$ and $\Tr(S^+)-\Tr(S^-)<0$, then $\bigcup_{T \in
      \mathcal T} \sigma(T)$ cannot be the entire interval
    $[0,2\rho_\infty]$.
\end{proposition}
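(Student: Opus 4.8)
The plan is to combine the eigenvalue bracketing already recorded in \eqref{eq:ev.brack.all} with the elementary fact that the Lebesgue measure of a finite union of intervals is bounded by the sum of their lengths. First I would adopt the convention introduced after Definition~\ref{def:PO}, setting $\lambda_k(S^+)=2\rho_\infty$ for $k=n^++1,\dots,n^-$, so that $S^-\preccurlyeq S^+$ yields $\lambda_k(S^-)\le\lambda_k(S^+)$ for all $1\le k\le n^-$ and the bracketing intervals $J_k=[\lambda_k(S^-),\lambda_k(S^+)]$ are defined for every such $k$. For a fixed $T\in\mathcal T$ on an $n_T$-dimensional space, the relations $S^-\preccurlyeq T\preccurlyeq S^+$ force $n^+\le n_T\le n^-$ and, using the same convention on the high indices, $\lambda_k(T)\in J_k$ for all $1\le k\le n_T$. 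Hence $\sigma(T)\subset\bigcup_{k=1}^{n^-}J_k$, and since this holds for every $T\in\mathcal T$,
\begin{equation*}
  \bigcup_{T\in\mathcal T}\sigma(T)\subset\bigcup_{k=1}^{n^-}J_k.
\end{equation*}

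Next I would estimate the measure. By monotonicity and subadditivity of $\mu$,
\begin{equation*}
  \mu\Bigl(\bigcup_{T\in\mathcal T}\sigma(T)\Bigr)
  \le\mu\Bigl(\bigcup_{k=1}^{n^-}J_k\Bigr)
  \le\sum_{k=1}^{n^-}\bigl(\lambda_k(S^+)-\lambda_k(S^-)\bigr).
\end{equation*}
The final step is to identify this sum with the claimed trace expression. Splitting off the surplus indices and using $\lambda_k(S^+)=2\rho_\infty$ there,
\begin{equation*}
  \sum_{k=1}^{n^-}\lambda_k(S^+)
  =\sum_{k=1}^{n^+}\lambda_k(S^+)+2\rho_\infty(n^--n^+)
  =\Tr(S^+)+2\rho_\infty(n^--n^+),
\end{equation*}
while $\sum_{k=1}^{n^-}\lambda_k(S^-)=\Tr(S^-)$; subtracting the two yields exactly $\Tr(S^+)-\Tr(S^-)+2\rho_\infty(n^--n^+)$, which is the asserted bound.

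For the particular case, with $n^--n^+=1$ and $\Tr(S^+)-\Tr(S^-)<0$ the right-hand side is strictly smaller than $2\rho_\infty=\mu([0,2\rho_\infty])$; since the union then has measure strictly below that of the full interval, it cannot coincide with $[0,2\rho_\infty]$. I do not expect a serious obstacle here: the only point requiring care is the bookkeeping of the dimension mismatch, i.e.\ keeping the convention $\lambda_k(S^+)=2\rho_\infty$ consistent across the $n^--n^+$ surplus indices and verifying that the containment $\lambda_k(T)\in J_k$ genuinely extends to all $k\le n_T$, and not merely to $k\le n^+$. Everything else reduces to subadditivity of Lebesgue measure and a telescoping of the two traces.
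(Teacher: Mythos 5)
Your proposal is correct and takes essentially the same route as the paper's own (one-line) proof: bound $\bigcup_{T\in\mathcal T}\sigma(T)$ by $\bigcup_{k=1}^{n^-}J_k$ via the eigenvalue bracketing, apply subadditivity of Lebesgue measure, and identify $\sum_{k=1}^{n^-}\bigl(\lambda_k(S^+)-\lambda_k(S^-)\bigr)$ with $\Tr(S^+)-\Tr(S^-)+2\rho_\infty(n^--n^+)$ using the convention $\lambda_k(S^+)=2\rho_\infty$ on the surplus indices. The only difference is that you make explicit the dimension bookkeeping ($n^+\le n_T\le n^-$ and the extension of $\lambda_k(T)\in J_k$ to all $k\le n_T$) that the paper leaves implicit.
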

\begin{proof}
  We have
  \begin{equation*}
    \mu\Bigl( \bigcup_{T \in \mathcal T} \sigma(T)\Bigr) 
    \le  \mu\Bigl(  \bigcup_{k=1}^{n^-} J_k\Bigr) 
    \le  \sum_{k=1}^{n^-}\bigl( \lambda_k(S^+)-\lambda_k(S^-))
    = \Tr(S^+) + 2\rho_\infty(n^--n^+)  -\Tr(S^-).
    \qedhere
  \end{equation*}
\end{proof}

We begin with the description of a procedure of manipulation of the
graph that will lead to a spectrally smaller DML.

\begin{definition}[\emph{virtualising} edges]
  \label{deleteedges}
  Let $\W=(\G,m)$ be a weighted graph with vector potential $\alpha$ and
  $E_0\subset E(\G)$.  We denote by $\EM=(\EG,\Em)$ the weighted
  subgraph with vector potential $\EA$ defined as follows:
  \begin{enumerate}
  \item $V(\EG)=V(\G)$ with $\Em(v):=m(v)$ for all $v\in V(\G)$;
  \item $E(\EG)=E(\G)\setminus E_0$ with $\Em_e:=m_e$ and
    $\bd_{\pm}^{\EG} e=\bd_{\pm}^\G e$ for all $e\in E(\EG)$;
  \item $\EA_e=\alpha_e$, $e\in E(\EG)$.
  \end{enumerate} 
  We call $\EM$ the weighted subgraph obtained from $\W$ by
  \emph{virtualising the edges $E_0$}. We will sometimes also use the
  suggestive notation $\EG=\G-E_0$.

  The corresponding discrete magnetic Laplacian is denoted by
  $\Delta_{\EA}^{\EM}$.
\end{definition}

\begin{remark}
  \label{rem:VE}
  \indent
  \begin{enumerate}
  \item 
    Note that $\Delta_{\EA}^{\EM}=(d_{\alpha^-})^*d_{\alpha^-}$, where
    $d_{\alpha^-}:= \pi \circ d_\alpha$ and $\pi \colon \ell_2(E(\G),m)
    \rightarrow \ell_2(E(\EG),\Em)$ is the orthogonal projection onto
    the functions on the non-virtualised edges. Note that
    $\pi=\iota^*$ with $\iota \colon \ell_2(E(\EG),\Em) \rightarrow
    \ell_2(E(\G),m)$ being the natural inclusion, i.e., for $\eta\in
    \ell_2(E(\EG),\Em)$ $\iota\eta$ is extended by $0$ on
    $E(\EG)=E(\G)\setminus E_0$.  Note that the process of
    virtualisation of edges can be also described by changing the
    weights on $\G$: set $m_{e_0}=0$ for $e_0\in E_0$, and leave all
    other weights unchanged.
  \item
    \label{VE.a}
    The process of \emph{virtualising} edges has consequences for
    various quantities related to the graph. The most important for us
    here refers to the spectrum (see the following proposition). If
    $\rho^-$ denotes the relative weight of $\EM$, then $\rho^-(v)\leq
    \rho(v)$ for $v \in V$.

    Let $\W=(\G,m)$ be a weighted graph with standard weights, and
    denote by $\W'=(\EG,m')$ the graph $\G^-$ with standard weights.  
    If $E_0 \neq \emptyset$, then there exists a
    $v\in V(\G)$ such that $\Em(v) > m'(v)$, i.e., the new weight
    $\Em$ is not standard anymore.
    More generally, if $\W=(\G,m)$ has a normalised weight, then $\Em$
    is no longer normalised; the relative weights of $\EM$ and $\W'$
    fulfil $\rho^-(v) \le \rho'(v)=1$ with strict inequality for $v
    \in V(\EG)$ incident with an edge in $E_0$.

    If $\W=(\G,m)$ is a weighted graph with combinatorial weight
    $m$, then $\Em$ as well has the combinatorial weight
    , i.e., combinatorial weights are
    preserved under edge virtualisation.

  \item It is also clear that if $\G$ is connected, then $\EG$ need
    not to be connected any more.  Moreover, the homology of the graph
    changes under edge virtualisation.  This is perhaps an
    important motivation of this definition.  Later we will exploit
    the fact that deleting a suitable set of edges the graph will turn it
    into a tree. The graph $\G^-$ is sometimes also called spanning subgraph.
  \end{enumerate}
\end{remark}

We show next that the process of virtualising edges produces a DML
which is spectrally smaller (cf., Definition~\ref{def:PO}).

\begin{proposition}
  \label{VirtualEdges}
  Let $\W=(\G,m)$ be a weighted graph with vector potential $\alpha$ and
  $E_0\subset E(\G)$. Denote by $\EM=(\EG,\Em)$ with $\EG=\G-E_0$
  the edge virtualised graph (cf.\ Definition~\ref{deleteedges}), then
  \begin{equation*}
    \Delta_{\EA}^{\EM} \preccurlyeq \Delta_\alpha^\W.
  \end{equation*}
\end{proposition}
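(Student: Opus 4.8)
The plan is to reduce the statement to the Loewner (quadratic-form) order on a single Hilbert space, combined with the min--max principle. First I observe that edge virtualisation leaves the vertex data untouched: by Definition~\ref{deleteedges} we have $V(\EG)=V(\G)$ and $\Em(v)=m(v)$ for every $v$, so the two operators $\Delta_{\EA}^{\EM}$ and $\Delta_\alpha^\W$ act on \emph{the same} Hilbert space $\ell_2(V(\G),m)$, of dimension $n=|V(\G)|$. Hence $n^-=n^+=n$ and the dimension requirement $n^-\ge n^+$ of Definition~\ref{def:PO} holds with equality; it remains only to prove the eigenvalue inequalities $\lambda_k(\Delta_{\EA}^{\EM})\le \lambda_k(\Delta_\alpha^\W)$ for $k=1,\dots,n$.

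Next I would compare the two quadratic forms. Since the weights, orientation and vector potential on the surviving edges are inherited unchanged from $\W$, the twisted derivative of $\EM$ is simply the restriction of $d_\alpha$ to the non-virtualised edges; equivalently, as recorded in Remark~\ref{rem:VE}, $\Delta_{\EA}^{\EM}=(d_{\alpha^-})^*d_{\alpha^-}$ with $d_{\alpha^-}=\pi\circ d_\alpha$, where $\pi$ is the orthogonal projection of $\ell_2(E(\G),m)$ onto $\ell_2(E(\EG),\Em)$. Writing $P=\pi^*\pi$ for the associated orthogonal projection in $\ell_2(E(\G),m)$ (onto forms supported away from $E_0$), this gives $\Delta_{\EA}^{\EM}=d_\alpha^*Pd_\alpha$, whereas $\Delta_\alpha^\W=d_\alpha^*d_\alpha$. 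For any $f\in\ell_2(V(\G),m)$ I then evaluate
\begin{equation*}
  \langle \Delta_{\EA}^{\EM} f,f\rangle
  = \langle P\, d_\alpha f,\, d_\alpha f\rangle
  \le \langle d_\alpha f,\, d_\alpha f\rangle
  = \langle \Delta_\alpha^\W f,f\rangle ,
\end{equation*}
the inequality being the elementary fact that an orthogonal projection satisfies $0\le P\le I$. Concretely this amounts to discarding the nonnegative terms $\sum_{e\in E_0}|(d_\alpha f)_e|^2 m_e$ from the sum defining the form of $\Delta_\alpha^\W$. Thus $\Delta_{\EA}^{\EM}\le \Delta_\alpha^\W$ in the sense of quadratic forms.

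Finally I invoke the min--max (Courant--Fischer) characterisation of eigenvalues: for self-adjoint operators $A\le B$ on the same finite-dimensional Hilbert space one has $\lambda_k(A)\le\lambda_k(B)$ for every $k$, since the Rayleigh quotient of $A$ is pointwise dominated by that of $B$ on each subspace entering the min--max formula. Applied to $A=\Delta_{\EA}^{\EM}$ and $B=\Delta_\alpha^\W$ this yields exactly $\lambda_k(\Delta_{\EA}^{\EM})\le\lambda_k(\Delta_\alpha^\W)$ for all $k$, and hence $\Delta_{\EA}^{\EM}\preccurlyeq\Delta_\alpha^\W$ by Definition~\ref{def:PO}. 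I do not expect a genuine obstacle here; the only point demanding a little care is the bookkeeping guaranteeing that the two operators live on the same space (so that min--max applies verbatim and no dimension drop occurs, in contrast to the vertex-virtualised case), and that $d_{\alpha^-}$ really coincides with $d_\alpha$ on the retained edges.
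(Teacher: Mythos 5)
Your proposal is correct and follows essentially the same route as the paper: both establish the quadratic-form inequality $\iprod{\Delta_{\EA}^{\EM}f}{f}\le\iprod{\Delta_\alpha^\W f}{f}$ by discarding the nonnegative contributions of the virtualised edges (your projection $P$ is just a repackaging of the paper's direct norm comparison $\normsqr[\ell_2(E(\EG),\Em)]{d_{\alpha^-}f}\le\normsqr[\ell_2(E(\G),m)]{d_\alpha f}$), and then apply the min--max characterisation together with the observation that $|V(\EG)|=|V(\G)|$. No gaps; this matches the paper's argument.
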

\begin{proof}
  Since $E(\EG) \subset E(\G)$ we have for $f\in \ell_2(V(\G),m)$  
  \begin{align*}
    \iprod {\Delta_{\EA}^{\EM} f} f
    = \normsqr[\ell_2(E(\EG),\Em)] {d_{\alpha^-} f}
    \le \normsqr[\ell_2(E(\G),m)] {d_\alpha f}
    =  \iprod {\Delta_\alpha^\W f} f \;.
  \end{align*}
    For $k \in\left\lbrace 1,2,\dots,|V(\G)|\right\rbrace $ denote by $\Xi_k$ the
  set of all intersections $E_k$ of $k$-dimensional subspaces with the
  $1$-sphere in $\ell_2 (V(\G),m)$.  Applying the variational
  characterisation of the spectrum (see,
  e.g.,~\cite[Theorem~6.1.2]{blanchard2012variational}) we conclude
  \begin{equation*}
    \lambda_k(\Delta_{\EA}^{\EM} )
    =\min_{E_k\in \Xi_k} \max_{f\in E_k} \normsqr {d_{\alpha^-} f} 
    \leq \min_{E_k\in \Xi_k} \max_{f\in E_k} \normsqr {d_\alpha f}. 
  \end{equation*}
  Since $|V(\G)|= |V(\EG)|$ we have shown $\Delta_{\EA}^{\EM} \preccurlyeq \Delta_\alpha^\W$.
\end{proof}

\begin{example}
  \label{exampledeleteedge}
   Let $\W=(\G,m)$ be the $6$-cycle with
  standard weights as in Figure~\ref{fig:smaller}.  Let
  $\EM=(\EG,\Em)$ be the graph with the edge $e_1$ virtualised (i.e.,
  $\EG=\G-\{e_1\}$, see Definition~\ref{def:PO}, and $\Em$ being the
  restriction of $m$ to $E(\EG)=E(\G) \setminus \{e_1\}$.  If $\alpha$
  is any vector potential on $\G$, then $\alpha$ can be supported on
  $e_1$, so $\EA$ is trivial on $\EG$. Therefore
  $\Delta_{\EA}^{\EM}$ is unitarily equivalent with $\Delta^{\EM}$ (usual Laplacian 
  with $\alpha=0$). Finally, we plot the
  six eigenvalues of $\Delta_\alpha^{\G}$ and $\Delta^{\EM}$, when
  $\alpha_{e_1}$ runs through $[0,2\pi]$. This example illustrates
  $\Delta_{\EA}^{\EM} \preccurlyeq \Delta_\alpha^\G$ hence, by unitary equivalence, also 
  $\Delta^{\EM} \preccurlyeq \Delta_\alpha^\G$. 

  \begin{figure}[h] \label{subfig:1}
      \centering \subcaptionbox{The graph $\G=C_6$.}
      [.3\linewidth]{\begin{tikzpicture}[auto,
          vertex/.style={circle,draw=black!100,fill=black!100, thick,
            inner sep=0pt,minimum size=1mm}] \node (A) at (1/2,-2/2)
          [vertex,label=below:$v_5$] {}; \node (B) at (-1/2,-2/2)
          [vertex,label=below:$v_6$] {}; \node (C) at (-2/2,0)
          [vertex,label=left:$v_1$] {}; \node (D) at (-1/2,2/2)
          [vertex,label=above:$v_2$] {}; \node (E) at (1/2,2/2)
          [vertex,label=above:$v_3$] {}; \node (F) at (2/2,0)
          [vertex,label=right:$v_4$] {};
      
  	\path [-latex](B) edge node[right] {} (C);
  	\path [-latex](C) edge node[right] {$e_1$} (D);
  	\path [-latex](D) edge node[below] {} (E);
  	\path [-latex](E) edge node[left] {} (F);
  	\path [-latex](F) edge node[left] {} (A);
  	\path [-latex](A) edge node[above] {} (B);
      \end{tikzpicture}} \subcaptionbox{$\EG=C_6 - \{e_1\}$.}  [.3\linewidth]{\begin{tikzpicture}[auto,
        vertex/.style={circle,draw=black!100,fill=black!100, thick,
          inner sep=0pt,minimum size=1mm}] \node (A) at (1/2,-2/2)
        [vertex,label=below:$v_5$] {}; \node (B) at (-1/2,-2/2)
        [vertex,label=below:$v_6$] {}; \node (C) at (-2/2,0)
        [vertex,label=left:$v_1$] {}; \node (D) at (-1/2,2/2)
        [vertex,label=above:$v_2$] {}; \node (E) at (1/2,2/2)
        [vertex,label=above:$v_3$] {}; \node (F) at (2/2,0)
        [vertex,label=right:$v_4$] {};
      
  	\path [-latex](B) edge node[right] {} (C);
  	\path [-latex](D) edge node[below] {} (E);
  	\path [-latex](E) edge node[left] {} (F);
  	\path [-latex](F) edge node[left] {} (A);
  	\path [-latex](A) edge node[above] {} (B);
      \end{tikzpicture}} \subcaptionbox{Spectra of
      $\Delta_\alpha^{C_6}$ (black lines) and
      $\Delta_{\EA}^{\EG}$ (dashed lines).}
    [.3\linewidth]{\includegraphics[width=.3\linewidth]{./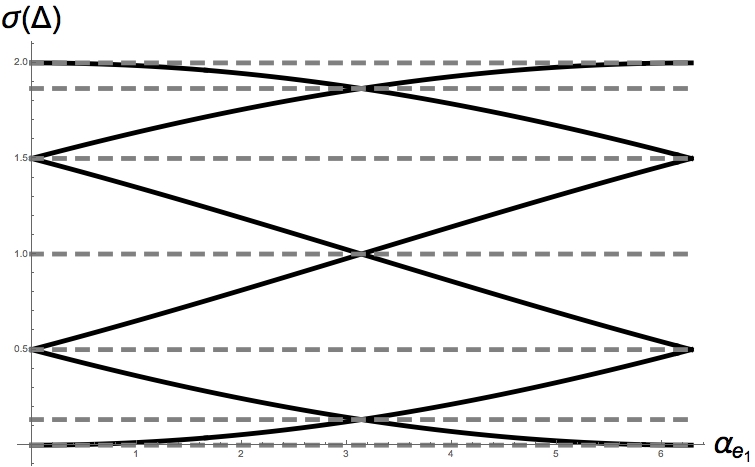}}
    \caption{Virtualization of an edge of $C_6$.\label{fig:smaller}}
\end{figure}
\end{example}

\begin{example}
  This example shows that Proposition~\ref{VirtualEdges} is not true
  if we insist on having standard weights both in $\G$ and $\EG$ (cf.,
  Remark~\ref{rem:VE}~\eqref{VE.a}). Take $\G$ and $\G^{-}=C_6 - \{e_1\}$
  both with the standard weights. Consider the vector potential
  supported on $e_1$ such that $\alpha_{e_1}=\pi/2$.  In this case
  we have $\lambda_{4}(\Delta^{\G^{-}}_{\EA}) =
  \lambda_{4}(\Delta^{\G^{-}}) \approx 1.30902 \nleq 1.25882 \approx
  \lambda_{4}(\Delta^{\G}_{\alpha})$,  hence
  $\Delta_{\EA}^{\G^{-}}$ is not spectrally smaller than $
  \Delta_\alpha^{\G}$.
\end{example}

We next describe the second elementary operation on the graph dual to
edge virtualising.
\begin{definition}[\emph{virtualising} vertices]
  \label{deletevertices}
  Let $\W=(\G,m)$ be a weighted graph with vector potential $\alpha$ and
  $V_0\subset V(\G)$.  We denote by $\VM=(\VG,\Vm)$ the weighted
  partial subgraph with vector potential $\VA$ defined as
  follows:
  \begin{enumerate}
  \item $V(\VG)=V(\G) \setminus V_0$ with $\Vm(v):=m(v)$ for all $v\in V(\VG)$;
  \item $E(\VG)=E(\G) \setminus E(
  V_0)$ with
    $\Vm_e:=m_e$ for all $e\in E(\VG)$;
  \item $\VA_e=\alpha_e$, $e\in E(\VG)$.  
  \end{enumerate} 
  We call $\VM$ the weighted partial subgraph obtained from $\W$ by
  \emph{virtualising the vertices $V_0$}. We will also use the
  suggestive notation $\VG=\G-V_0$.

  The corresponding discrete magnetic Laplacian is defined by 
  \begin{equation*}
    \Delta_{\VA}^{\VM}=(d_{\VA})^*d_{\VA},
    \qquadtext{where}
    d_{\VA}:= d_\alpha \circ \iota
  \end{equation*}
  with
  \begin{equation*}
    \iota \colon \ell_2(V(\VG),\Vm) \rightarrow \ell_2(V(\G),m),
    \qquad
    (\iota f)(v)=
    \begin{cases}
      f(v), &  v \in V(\VG),\\
      0, & v \in V_0.
    \end{cases}
  \end{equation*}
\end{definition}

\begin{remark}
  \label{rem:VV}
  \indent
  \begin{enumerate}
  \item
    \label{VV.0}
    Here, we use for the first time the notion of \emph{partial
      subgraphs} having edges with only one vertex in the $V(\VG)$,
    the other one being in $V_0$; one can also call the vertices in
    $V_0$ \emph{virtual}.  Formally, $\bd \colon E(\VG) \to V(\G)
    \times V(\G)$ still maps into the product of the vertex set, but
    of the \emph{original} graph, not into $V(\VG) \times V(\VG)$.

    In general, $(V(\VG),E(\VG),\bd)$ is not a graph in the classical
    sense anymore, as some edges have initial or terminal vertices no
    longer in $V(\VG)$.  One can actually see that there is no such
    proper weighted graph $\W_1=(V(\VG),E_1,\bd)$. In fact, the
    corresponding Laplacian $\Delta^{\W_1}$ has $0$ as lowest
    eigenvalue, but $\Delta^{\VM}$ does not have $0$ as lowest
    eigenvalue (provided $V_0 \ne \emptyset$) since any function with
    $\Delta^{\VM} f=0$ has to be constant on $V$, and $0$ on $V_0$,
    hence $f=0$.
  \item
    \label{VV.a}
    The definition of $d_{\VA}=d_\alpha \circ \iota$ is consistent
    with the natural definition of $d_{\VA}$ for a partial subgraph,
    namely we set
    \begin{equation*}
      (d_{\VA} f)_e=
      \begin{cases}
        \e^{\im \alpha_e/2}f(\bd_+e)-\e^{-\im \alpha_e/2}f(\bd_-e),
             &\text{if $\bd_\pm e \in V(\VG)$,}\\
        \e^{\im \alpha_e/2}f(\bd_+e),         
             &\text{if $\bd_+ e \in V(\VG)$, $\bd_-e \in V_0$,}\\
        - \e^{-\im \alpha_e/2}f(\bd_-e),       
             &\text{if $\bd_- e \in V(\VG)$, $\bd_+e \in V_0$,}
      \end{cases}
    \end{equation*}
    see~\eqref{eq:twist.der}.  This is the same as extending $f \in
    \ell_2(V(\VG),\Vm)$ by $0$.  In particular, the notation $d_{\VA}$
    is justified.  Actually, the vector potential on connecting edges $e \in
    B(\VG,\G)$ can be gauged away.
   
  \item
    \label{VV.b}
    The nature of virtualising vertices is different from the process
    of virtualising edges, but dual in the sense that for virtualising
    edges, we use $d_{\alpha^-}=\iota^* \circ d_\alpha$, and for
    virtualising vertices, we use $d_{\alpha^+}=d_\alpha \circ \iota$
    with $\iota$ being the natural embedding on the space of edges and
    vertices, respectively.  As a consequence, $\Delta_{\VA}^{\VM} =
    d_{\alpha^+}^* d_{\VA} =\iota^*\Delta_\alpha^\W \iota$, i.e.,
    $\Delta_{\VA}^{\VM}$ is a compression of $\Delta_\alpha^\W$.  If
    we number the vertices $V(\G)$ such that the vertices of $V_0$
    appear at the end, then a matrix representation of
    $\Delta_\alpha^\W$ (cf.\ Subsection~\ref{MatrixDML}) has the block
    structure
    \begin{equation*}
      \Delta_\alpha^\W 
      =\begin{bmatrix} \Delta_{\VA}^{\VM}& C_{12}\\ C_{21} & C_{22} \end{bmatrix},
    \end{equation*}
    i.e., $\Delta{_{\VA}^{\VM}}$ corresponds to a principal sub-matrix
    of $\Delta_\alpha^\W$.  In particular, we have the following
    inequality for traces:
    \begin{equation*}
      \Tr\left[ \Delta{_{\VA}^{\VM}} \right] 
      \leq \Tr \left[\Delta_\alpha^\W \right].
    \end{equation*}
  \end{enumerate}
\end{remark}

We show next that the process of vertex virtualisation makes a DML
spectrally larger:
\begin{proposition}
  \label{DMDL}
  Let $\W=(\G,m)$ be a weighted finite graph with $\alpha$ a vector
  potential and $V_0\subset V$. Denote by $\VM=(\VG,\Vm)$ the
  vertex-virtualised graph with $\VG=\G - V_0$, then
  \begin{equation*}
    \Delta_\alpha^\W \preccurlyeq \Delta_{\VA}^{\VM}.
  \end{equation*}
\end{proposition}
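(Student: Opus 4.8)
The plan is to reduce the statement to the compression structure already recorded in Remark~\ref{rem:VV}~\eqref{VV.b}, namely that $\Delta_{\VA}^{\VM}=\iota^*\Delta_\alpha^\W\iota$, where $\iota\colon\ell_2(V(\VG),\Vm)\to\ell_2(V(\G),m)$ is the isometric extension-by-zero embedding. Since $|V(\VG)|=|V(\G)|-|V_0|\le|V(\G)|$, the dimension requirement $n^-\ge n^+$ of Definition~\ref{def:PO} is automatic, so the only content is the eigenvalue inequality $\lambda_k(\Delta_\alpha^\W)\le\lambda_k(\Delta_{\VA}^{\VM})$ for $1\le k\le|V(\VG)|$. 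This is exactly Cauchy interlacing for a principal submatrix, and I would establish it directly through the min-max characterisation, in parallel to the proof of Proposition~\ref{VirtualEdges}.

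First I would observe that $\iota$ is isometric, because $\Vm(v)=m(v)$ on $V(\VG)$; hence $\norm[\ell_2(V(\G),m)]{\iota g}=\norm[\ell_2(V(\VG),\Vm)]{g}$ for every $g$, and by the definition of the adjoint
\[
  \iprod[\ell_2(V(\VG),\Vm)]{\Delta_{\VA}^{\VM}g}{g}
  =\iprod[\ell_2(V(\G),m)]{\Delta_\alpha^\W\iota g}{\iota g}.
\]
Writing $W:=\iota\bigl(\ell_2(V(\VG),\Vm)\bigr)\subset\ell_2(V(\G),m)$, the assignment $U\mapsto\iota(U)$ is then a bijection between the $k$-dimensional subspaces of $\ell_2(V(\VG),\Vm)$ and the $k$-dimensional subspaces of $W$, and it preserves the value of the Rayleigh quotient $f\mapsto\iprod[\ell_2(V(\G),m)]{\Delta_\alpha^\W f}{f}$ on the unit sphere.

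Invoking the variational characterisation (see~\cite[Theorem~6.1.2]{blanchard2012variational}) I would then write
\[
  \lambda_k(\Delta_{\VA}^{\VM})
  =\min_{\dim\tilde U=k,\ \tilde U\subset W}\ \max_{f\in\tilde U,\ \norm{f}=1}
     \iprod[\ell_2(V(\G),m)]{\Delta_\alpha^\W f}{f},
\]
where the minimum now ranges only over $k$-dimensional subspaces contained in $W$. Since this is a minimum over a strictly smaller family than that of all $k$-dimensional subspaces of $\ell_2(V(\G),m)$, its value can only be larger, so that
\[
  \lambda_k(\Delta_{\VA}^{\VM})
  \ge\min_{\dim U=k,\ U\subset\ell_2(V(\G),m)}\ \max_{f\in U,\ \norm{f}=1}
     \iprod[\ell_2(V(\G),m)]{\Delta_\alpha^\W f}{f}
  =\lambda_k(\Delta_\alpha^\W)
\]
for all $1\le k\le|V(\VG)|$. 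Combined with $n^-\ge n^+$ this yields $\Delta_\alpha^\W\preccurlyeq\Delta_{\VA}^{\VM}$.

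I do not expect a genuine obstacle here: the single point requiring care is the direction of the inequality, which comes precisely from the fact that \emph{restricting} the minimising family of subspaces to those lying inside $W$ \emph{raises} the min-max value (this is the opposite mechanism to Proposition~\ref{VirtualEdges}, where restricting the form to fewer edges lowered it). A self-contained alternative would be to apply Cauchy's interlacing theorem directly to the block decomposition of $\Delta_\alpha^\W$ displayed in Remark~\ref{rem:VV}~\eqref{VV.b}, but the variational argument is cleaner and reuses the machinery already in place.
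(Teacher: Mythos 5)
Your proof is correct. It rests on exactly the same structural fact as the paper's proof, namely the compression identity $\Delta_{\VA}^{\VM}=\iota^*\Delta_\alpha^\W\,\iota$ of Remark~\ref{rem:VV}~\eqref{VV.b}; the only difference is the finishing step. The paper simply cites Cauchy's Interlacing Theorem (\cite[Corollary~II.1.5]{bhatia1987perturbation}), which yields both inequalities
\begin{equation*}
  \lambda_k(\Delta_\alpha^\W)\le\lambda_k(\Delta_{\VA}^{\VM})\le\lambda_{k+r}(\Delta_\alpha^\W),
  \qquad k=1,\dots,n-r,
\end{equation*}
and then uses only the first one, whereas you re-derive that first inequality from scratch via the min-max principle: the isometry $\iota$ puts the $k$-dimensional subspaces of $\ell_2(V(\VG),\Vm)$ in Rayleigh-quotient-preserving bijection with the $k$-dimensional subspaces of the range $W$, and shrinking the competing family of subspaces raises the minimum. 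This is precisely the standard proof of the relevant half of the interlacing theorem, so the two routes are mathematically equivalent. What your version buys is self-containedness and a pleasing symmetry with the proof of Proposition~\ref{VirtualEdges} (both become variational arguments, in opposite directions, as you point out). What the paper's version buys is the second inequality $\lambda_k(\Delta_{\VA}^{\VM})\le\lambda_{k+r}(\Delta_\alpha^\W)$ at no extra cost; that bound is not needed for the proposition itself, but it is exactly what the subsequent corollary on complete interlacing (the case $|V_0|=1$) relies on, so if you adopt your proof you would need a supplementary max-min argument (or the full interlacing theorem) at that point.
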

\begin{proof}
  Let $n=|V|$ and $r=|V_0|$.  Since $\Delta_{\VA}^{\VM}
  =\iota^*\Delta_\alpha^\W \iota$ is a compression of
  $\Delta_\alpha^\W$ we can apply Cauchy's Interlacing Theorem (see
  e.g.~\cite[Corollary~II.1.5]{bhatia1987perturbation})
  and obtain
  \begin{equation*}
    \lambda_k(\Delta_\alpha^\W)
    \leq \lambda_k(\Delta_{\VA}^{\VM}) 
    \leq \lambda_{k+r}(\Delta_\alpha^\W),
    \qquad k=1,2, \dots,n-r.
  \end{equation*} 
  Since $\Delta_{\VA}^{\VM}$ acts on an $(n-r)$-dimensional Hilbert
  space we have shown that $\Delta_\alpha^\W\preccurlyeq
  \Delta_{\VA}^{\VM}$ (cf., Definition~\ref{def:PO}) using only the
  first inequality.
\end{proof}

Using the notation of the preceding proposition, we also conclude:
\begin{corollary}
  If $|V_0|=1$ then we have a complete interlacing of eigenvalues,
  i.e.,
  \begin{equation*}
    \lambda_1(\Delta_\alpha^\W)
    \leq \lambda_1(\Delta_{\VA}^{\VM})
    \leq \lambda_2(\Delta_\alpha^{\W})
    \leq \dots 
    \leq \lambda_{n-1}(\Delta_\alpha^{\W})
    \leq \lambda_{n-1}(\Delta_{\VA}^{\VM})
    \leq\lambda_n(\Delta_\alpha^\W).
  \end{equation*}
\end{corollary}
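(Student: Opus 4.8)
The plan is to observe that this is an immediate consequence of the Cauchy Interlacing Theorem already invoked in the proof of Proposition~\ref{DMDL}, now retaining \emph{both} of the two-sided bounds it provides rather than only the lower one.

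First I would recall the precise estimate established there. Writing $n=|V|$ and $r=|V_0|$, the compression $\Delta_{\VA}^{\VM}=\iota^*\Delta_\alpha^\W\iota$ satisfies, by Cauchy interlacing,
\begin{equation*}
  \lambda_k(\Delta_\alpha^\W)
  \leq \lambda_k(\Delta_{\VA}^{\VM})
  \leq \lambda_{k+r}(\Delta_\alpha^\W),
  \qquad k=1,\dots,n-r.
\end{equation*}
In the proof of Proposition~\ref{DMDL} only the left-hand inequality was needed to deduce $\Delta_\alpha^\W\preccurlyeq\Delta_{\VA}^{\VM}$; the right-hand inequality was discarded.

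Second, I would specialise to the case $r=1$. Then $n-r=n-1$, the upper index $k+r$ collapses to $k+1$, and the displayed bounds read
\begin{equation*}
  \lambda_k(\Delta_\alpha^\W)
  \leq \lambda_k(\Delta_{\VA}^{\VM})
  \leq \lambda_{k+1}(\Delta_\alpha^\W),
  \qquad k=1,\dots,n-1.
\end{equation*}
Chaining these over $k=1,2,\dots,n-1$ — gluing the right endpoint $\lambda_{k+1}(\Delta_\alpha^\W)$ of the $k$-th pair to the left endpoint of the $(k+1)$-th pair, which is the same quantity — produces exactly the asserted interlacing chain.

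The entire content sits inside Cauchy's theorem, so there is no genuine obstacle here: the only thing to notice is that virtualising a \emph{single} vertex makes the gap $r$ in the two-sided interlacing equal to $1$, which is precisely what upgrades the one-sided spectral ordering $\Delta_\alpha^\W\preccurlyeq\Delta_{\VA}^{\VM}$ of Proposition~\ref{DMDL} to the full interlacing displayed in the statement.
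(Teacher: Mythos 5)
Your proposal is correct and is precisely the paper's (implicit) argument: the corollary is stated as an immediate consequence of Proposition~\ref{DMDL}, whose proof already records the two-sided Cauchy interlacing bound $\lambda_k(\Delta_\alpha^\W)\leq\lambda_k(\Delta_{\VA}^{\VM})\leq\lambda_{k+r}(\Delta_\alpha^\W)$, and setting $r=1$ and chaining over $k$ yields the displayed interlacing exactly as you describe.
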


Summarising, given a weighted graph $\W=(\G,m)$ with vector potential
$\alpha$ the process of edge and vertex virtualisation produces two
different graphs $\EM=(\EG,\Em)$ respectively $\VM=(\VG,\Vm)$ with
induced potentials and such that the corresponding DMLs are spectrally
smaller respectively larger than the original one, i.e.,
\begin{equation*}
  \Delta^{\EM}_{\EA}  
  \preccurlyeq \Delta_\alpha^\W
  \preccurlyeq \Delta_{\VA}^{\VM}.
\end{equation*}


This will be the basis for the bracketing technique used later on.
Let us now specify the virtualised edge and vertex set such that the
vector potential on $\EG$ and $\VG$ becomes cohomologous to 0:

\begin{definition}
  \label{def:admissible}
  Let $\G$ be a graph and $E_0 \subset E(\G)$.  We say that a vertex
  subset $V_0 \subset V(\G)$ is \emph{in the neighbourhood of $E_0$}
  if $E_0 \subset \bigcup_{v \in V_0} E_v$, i.e., if $\bd_+ e \in V_0$
  or $\bd_- e\in V_0$ for all $e\in E_0$.
\end{definition}
Later on $E_0$ will be the set of connecting edges of a periodic graph, and we
will choose $V_0$ to be as small as possible to guarantee the
existence of spectral gaps (this set is in general not unique).

\begin{theorem}
  \label{thm:technique}
  Let $\W=(\G,m)$ be a  finite weighted graph and $E_0\subset E(\G)$.  Then
  for any vector potential $\alpha$ supported in $E_0$ and any set
  $V_0$ in the neighbourhood of $E_0$ of vertices we have
  \begin{equation}
    \Delta^{\EM}
    \preccurlyeq \Delta^{\W}_\alpha
    \preccurlyeq \Delta^{\VM},
  \end{equation}
  where $\EM=(\EG,\Em)$ with $\EG=\G-E_0$ and $\VM=(\VG,\Vm)$ with
  $\VG=\G - V_0$.  In particular, we have the spectral localising
  inclusion
  \begin{equation}
    \label{eq:loc.spec}
    \sigma(\Delta^{\W}_\alpha) 
    \subset  J
    := J(\Delta^{\EM},\Delta^{\VM})
    = \bigcup_{k=1}^{|V(\G)|} 
        \bigl[\lambda_k(\Delta^{\EM}),\lambda_k(\Delta^{\VM})\bigr],
  \end{equation}
  where $J$ does not depend on the vector potential.
\end{theorem}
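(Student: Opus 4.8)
The plan is to combine the two virtualisation propositions already established, after reducing to the case where the vector potential is trivial on the modified graphs. First I would observe that since $\alpha$ is supported in $E_0$, the edge-virtualised graph $\EG=\G-E_0$ carries the induced potential $\EA$ which vanishes on every surviving edge: by Definition~\ref{deleteedges}(c) we have $\EA_e=\alpha_e$ for $e\in E(\EG)=E(\G)\setminus E_0$, and $\alpha_e=0$ there by the support assumption. Hence $\EA\equiv 0$ and $\Delta_{\EA}^{\EM}=\Delta^{\EM}$. Applying Proposition~\ref{VirtualEdges} then gives $\Delta^{\EM}=\Delta_{\EA}^{\EM}\preccurlyeq\Delta_\alpha^\W$, which is the left-hand inequality.

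For the right-hand inequality I would argue that the hypothesis $V_0$ in the neighbourhood of $E_0$ forces the vertex-virtualised potential $\VA$ to be cohomologous to $0$. By Definition~\ref{def:admissible}, every edge $e\in E_0$ has an endpoint in $V_0$; therefore when we form $\VG=\G-V_0$ by deleting $E(V_0)$, every edge of $E_0$ that is \emph{not} already removed becomes a connecting edge lying in $B(\VG,\G)$, since one of its endpoints sits in $V_0$. By Remark~\ref{rem:VV}\eqref{VV.a} the vector potential on connecting edges can be gauged away, and on the remaining edges $E(\VG)\setminus B(\VG,\G)$ the potential $\VA_e=\alpha_e$ already vanishes because those edges lie outside $E_0$. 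Thus $\VA\sim 0$, so $\Delta_{\VA}^{\VM}\cong\Delta^{\VM}$ and in particular $\sigma(\Delta_{\VA}^{\VM})=\sigma(\Delta^{\VM})$; since $\preccurlyeq$ is invariant under unitary conjugation (noted after Definition~\ref{def:PO}), Proposition~\ref{DMDL} yields $\Delta_\alpha^\W\preccurlyeq\Delta_{\VA}^{\VM}\cong\Delta^{\VM}$, which is the right-hand inequality.

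Chaining the two relations gives $\Delta^{\EM}\preccurlyeq\Delta_\alpha^\W\preccurlyeq\Delta^{\VM}$, and the spectral localising inclusion~\eqref{eq:loc.spec} is then immediate from the general bracketing statement~\eqref{eq:ev.brack.all} applied with $S^-=\Delta^{\EM}$, $T=\Delta_\alpha^\W$, $S^+=\Delta^{\VM}$, noting that $\EG$ retains all $|V(\G)|$ vertices so the union runs to $k=|V(\G)|$. Independence of $J$ from $\alpha$ is clear since neither endpoint operator $\Delta^{\EM}$ nor $\Delta^{\VM}$ involves $\alpha$ at all.

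The main obstacle is the gauge step in the second paragraph: one must check carefully that a connecting edge incident to $V_0$ genuinely contributes no phase to $\Delta_{\VA}^{\VM}$ and that simultaneously gauging all such edges is consistent (i.e., one can choose a single $\varphi\colon V(\VG)\to\Torus$ realising $\VA\sim 0$ across all components of $\VG$). The cleanest way to see this is to recall from Remark~\ref{rem:VV}\eqref{VV.a} that $d_{\VA}=d_\alpha\circ\iota$ extends functions by $0$ on $V_0$, so for a connecting edge with one endpoint virtual the phase factor $\e^{\pm\im\alpha_e/2}$ multiplies a value that has been set to $0$ and hence drops out of the quadratic form entirely. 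Consequently the potential on connecting edges is irrelevant to $\Delta_{\VA}^{\VM}$, and on the interior edges of $\VG$ the potential already vanishes, so no nontrivial cohomology survives and the gauge transformation is trivial to exhibit.
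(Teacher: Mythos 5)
Your proposal is correct and follows essentially the same route as the paper's proof: Proposition~\ref{VirtualEdges} plus the support assumption gives the lower bound, Proposition~\ref{DMDL} plus the observation that the potential is trivial (or can be gauged away) on $\VG$ gives the upper bound, and the bracketing inclusion then follows from the general eigenvalue bracketing. Your closing quadratic-form argument—that on a connecting edge the virtual endpoint contributes $0$, so the surviving phase is unimodular and drops out of $\normsqr{d_{\VA}f}$—is a welcome sharpening of the step the paper dispatches with the phrase ``the vector potential $\VA$ can be gauged away'' (cf.\ Remark~\ref{rem:VV}), and it even yields equality $\Delta_{\VA}^{\VM}=\Delta^{\VM}$ rather than mere unitary equivalence.
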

\begin{proof}
  Let $\W=(\G,m)$ be a weighted graph and $E_0\subset E(\G)$.  For any
  vector potential $\alpha$, we have by Proposition~\ref{VirtualEdges}
  that $\Delta^{\EM}_{\EA} \preccurlyeq \Delta^{\W}_\alpha$, where
  $\EM=(\EG,\Em)$. If, in addition, $\alpha$ is supported in $E_0$,
  then $\alpha_e=0$ for any $E(\EG)=E(\G)\setminus E_0$ hence,
 \begin{equation*}
    \Delta^{\EM}
    \preccurlyeq \Delta^{\W}_\alpha.
  \end{equation*}
  For $V_0$ in the neighbourhood of $E_0$ and any vector potential
  $\alpha$, we have by Proposition~\ref{DMDL} that $
  \Delta^{\W}_\alpha \preccurlyeq \Delta^{\VM}_{\VA}$, where
  $\VM=(\VG,\Vm)$. If $\alpha$ is supported in $E_0$ and since $E_0
  \subset \bigcup_{v \in V_0} E_v$, i.e., if $\bd_+ e \in V_0$ or
  $\bd_- e\in V_0$ for all $e\in E_0$, then the vector potential
  ${\VA}$ can be gauged away, hence
 \begin{equation*}
\Delta^{\W}_\alpha
    \preccurlyeq \Delta^{\VM}.
  \end{equation*}   
  By construction we have that the operators $\Delta^{\W^\pm}$
  specifying the boundary of the bracketing intervals are independent
  of the vector potentials.  Finally, the bracketing inclusion follows
  from the Definition~\ref{def:brack.int}.
\end{proof}

\begin{remark}\label{RemarkTree}
  If $\EG=\G\setminus E_0$ is a tree, then we can allow vector
  potentials supported on all edges in $E$, since
  $\Delta_\alpha^{\EM}$ is unitarily equivalent to $\Delta^{\EM}$.
  Similarly, on $\VG$, the vector potential is cohomologous to $0$, as
  the remaining loops are also removed by virtualising the vertices in
  $V_0$ (recall that $V_0$ is in the neighbourhood of $E_0$, see
  Definition~\ref{def:admissible}).  In particular, we have
  \begin{equation}
    \label{eq:bracketing}
    \UVP\sigma(\Delta^{\W}_\alpha) 
    \subset  \bigcup_{k=1}^{|V(\G)|}
    [\lambda_k(\Delta^{\EM}),\lambda_k(\Delta^{\VM})]
    =: J
  \end{equation}
  for any vector potential $\alpha$ on $\G$.  Taking complements
  gives
  \begin{equation}
    \label{eq:j.gaps}
    [0,2\rho_\infty] \setminus J 
    \subset [0,2\rho_\infty] \setminus \UVP \sigma(\Delta_\alpha^\W).
  \end{equation}
\end{remark}

%
\section{Magnetic spectral gaps}
\label{sec:MSG}
%

We will apply in this section
the spectral ordering method mentioned in the preceding section to
localise the spectrum of the DML on certain bracketing intervals. With
this technique we will be able to prove the existence of spectral gaps
for certain periodic Laplacians. We will also consider in this section only
finite graphs.

We begin by making precise several notions of spectral gaps.
Denote by $\sigma(T)$ and $\rho(T)=\C \setminus \sigma(T)$ the spectra
and the resolvent set of a self-adjoint operator $T$, respectively.
Recall that $\sigma(\Delta^\G_\alpha) \subset [0,2\rho_\infty]$,
where $\rho_\infty$ denotes the supremum of the relative weight,
see Equation~\eqref{eq:rel.weight}.
Remark~\ref{RemarkTree} suggests the following natural question: when do we have $J \subsetneq
[0,2\rho_\infty]$? This question motivates the following definition.

\begin{definition}
  \label{def:spec.gaps}
  Let $\W=(\G,m)$ be a weighted graph.
  \begin{enumerate}
  \item The \emph{spectral gaps set} of $\W$ is defined by
    \begin{equation*}
      \SG^\W
      =[0,2\rho_\infty] \setminus \sigma(\Delta^\W)
      =[0,2\rho_\infty]\cap  \rho(\Delta^\W).
    \end{equation*}
    
  \item The \emph{magnetic spectral gaps set} of $\W$ is defined by
    \begin{equation*}
      \MSG^\W
      =[0,2\rho_\infty] \setminus \UVP \sigma(\Delta^\W_\alpha)
      =\bigcap_{\alpha\in\mathcal{A}(\G)} \rho(\Delta^\W_\alpha) \cap [0,2\rho_\infty].
    \end{equation*}
  \end{enumerate}
  where the union is taken over all the vector potential $\alpha$
  acting on $\G$.
\end{definition}


We have the following elementary properties:
\begin{itemize}
\item $\MSG^\W\subset \SG^\W$.  In particular, if $\SG^\W=\emptyset$,
  then $\MSG^\W=\emptyset $.  Moreover, if $\MSG^\W \neq \emptyset$,
  then $\SG^\W \neq \emptyset$.

\item If $\G$ is a tree, then $\MSG^\W = \SG^\W$, as all DMLs are
  unitarily equivalent with $\Delta^\W$ (the usual Laplacian).
\end{itemize}

\begin{example}
  If $\W=(\G,m)$ is a weighted graph where $\G$ is either the
  $\Z^n$-lattice or the graphene lattice (hexagonal lattice consisting
  of carbon atoms, see Figure~\ref{subfig:Z2}
  and~\ref{subfig:graphene}) both with standard weights, then
  $\sigma(\Delta^{\G})=[0,2]$. 
  Hence, the set of spectral gaps is empty, i.e., $\SG^\G=\emptyset$
  and hence $\MSG^\G=\emptyset$.

\begin{figure}[h]
    \label{fig:graphene}
\centering
\subcaptionbox{The $\Z^2$-lattice. \label{subfig:Z2}}{
\begin{tikzpicture}[auto, vertex/.style={circle,draw=black!100,fill=black!100, thick,
                inner sep=0pt,minimum size=1mm},scale=1.25]	                
    \node (D) at (-.75,0) [vertex,inner sep=.25pt,minimum size=.25pt,label=above:]{};
    \node (D) at (-.5,0) [vertex,inner sep=.25pt,minimum size=.25pt,label=above:]{};
    \node (D) at (-.25,0) [vertex,inner sep=.25pt,minimum size=.25pt,label=above:]{};
    \node (A) at (0,0) [vertex,label=below:] {};
    \node (B) at (.5,0) [vertex,label=below:]{};
    \node (C) at (1,0) [vertex,label=below:]{};
    \node (D) at (1.5,0) [vertex,label=below:]{};
    \draw[-latex] (A) to[] node[above] {} (B);
	\draw[-latex] (B) to[]  node[above] {} (C);
	\draw[-latex] (C) to[]  node[above] {} (D);
	\node (E) at (1.75,0) [vertex,inner sep=.25pt,minimum size=.25pt,label=above:]{};
	\node (F) at (2,0) [vertex,inner sep=.25pt,minimum size=.25pt,label=above:]{};
	\node (G) at (2.25,0) [vertex,inner sep=.25pt,minimum size=.25pt,label=above:]{};
    \node (D1) at (-.75,.5) [vertex,inner sep=.25pt,minimum size=.25pt,label=above:]{};
    \node (D1) at (-.5,.5) [vertex,inner sep=.25pt,minimum size=.25pt,label=above:]{};
    \node (D1) at (-.25,.5) [vertex,inner sep=.25pt,minimum size=.25pt,label=above:]{};           
    \node (A1) at (0,.5) [vertex,label=below:] {};
    \node (B1) at (.5,.5) [vertex,label=below:]{};
    \node (C1) at (1,.5) [vertex,label=below:]{};
    \node (D1) at (1.5,.5) [vertex,label=below:]{};
    \draw[-latex] (A1) to[] node[above] {} (B1);
	\draw[-latex] (B1) to[]  node[above] {} (C1);
	\draw[-latex] (C1) to[]  node[above] {} (D1);
	\node (E1) at (1.75,.5) [vertex,inner sep=.25pt,minimum size=.25pt,label=above:]{};
	\node (F1) at (2,.5) [vertex,inner sep=.25pt,minimum size=.25pt,label=above:]{};
	\node (G1) at (2.25,.5) [vertex,inner sep=.25pt,minimum size=.25pt,label=above:]{};
    \draw[-latex] (A) to[] node[left] {} (A1);
	\draw[-latex] (B) to[]  node[left] {} (B1);
	\draw[-latex] (C) to[]  node[left] {} (C1);
	\draw[-latex] (D) to[]  node[left] {} (D1);
    \node (D2) at (-.75,-.5) [vertex,inner sep=.25pt,minimum size=.25pt,label=above:]{};
    \node (D2) at (-.5,-.5) [vertex,inner sep=.25pt,minimum size=.25pt,label=above:]{};
    \node (D2) at (-.25,-.5) [vertex,inner sep=.25pt,minimum size=.25pt,label=above:]{};             
    \node (A2) at (0,-.5) [vertex,label=below:] {};
    \node (B2) at (.5,-.5) [vertex,label=below:]{};
    \node (C2) at (1,-.5) [vertex,label=below:]{};
    \node (D2) at (1.5,-.5) [vertex,label=below:]{};
    \draw[-latex] (A2) to[] node[above] {} (B2);
	\draw[-latex] (B2) to[]  node[above] {} (C2);
	\draw[-latex] (C2) to[]  node[above] {} (D2);
	\node (F2) at (1.75,-.5) [vertex,inner sep=.25pt,minimum size=.25pt,label=above:]{};
	\node (E2) at (2,-.5) [vertex,inner sep=.25pt,minimum size=.25pt,label=above:]{};
	\node (G2) at (2.25,-.5) [vertex,inner sep=.25pt,minimum size=.25pt,label=above:]{};
    \draw[-latex] (A2) to[] node[left] {} (A);
	\draw[-latex] (B2) to[]  node[left] {} (B);
	\draw[-latex] (C2) to[]  node[left] {} (C);
	\draw[-latex] (D2) to[]  node[left] {} (D);
    \node (Au) at (0,-.75) [vertex,inner sep=.25pt,minimum size=.25pt,label=above:]{};
    \node (Au) at (0,-1) [vertex,inner sep=.25pt,minimum size=.25pt,label=above:]{};
    \node (Au) at (0,-1.25) [vertex,inner sep=.25pt,minimum size=.25pt,label=above:]{};
    \node (Au) at (1,-.75) [vertex,inner sep=.25pt,minimum size=.25pt,label=above:]{};
    \node (Au) at (1,-1) [vertex,inner sep=.25pt,minimum size=.25pt,label=above:]{};
    \node (Au) at (1,-1.25) [vertex,inner sep=.25pt,minimum size=.25pt,label=above:]{};
    \node (Au) at (0.5,-.75) [vertex,inner sep=.25pt,minimum size=.25pt,label=above:]{};
    \node (Au) at (0.5,-1) [vertex,inner sep=.25pt,minimum size=.25pt,label=above:]{};
    \node (Au) at (0.5,-1.25) [vertex,inner sep=.25pt,minimum size=.25pt,label=above:]{};
    \node (Au) at (1.5,-.75) [vertex,inner sep=.25pt,minimum size=.25pt,label=above:]{};
    \node (Au) at (1.5,-1) [vertex,inner sep=.25pt,minimum size=.25pt,label=above:]{};
    \node (Au) at (1.5,-1.25) [vertex,inner sep=.25pt,minimum size=.25pt,label=above:]{};
    \node (Au) at (0,.75) [vertex,inner sep=.25pt,minimum size=.25pt,label=above:]{};
    \node (Au) at (0,1) [vertex,inner sep=.25pt,minimum size=.25pt,label=above:]{};
    \node (Au) at (0,1.25) [vertex,inner sep=.25pt,minimum size=.25pt,label=above:]{};
    \node (Au) at (1,.75) [vertex,inner sep=.25pt,minimum size=.25pt,label=above:]{};
    \node (Au) at (1,1) [vertex,inner sep=.25pt,minimum size=.25pt,label=above:]{};
    \node (Au) at (1,1.25) [vertex,inner sep=.25pt,minimum size=.25pt,label=above:]{};
    \node (Au) at (0.5,.75) [vertex,inner sep=.25pt,minimum size=.25pt,label=above:]{};
    \node (Au) at (0.5,1) [vertex,inner sep=.25pt,minimum size=.25pt,label=above:]{};
    \node (Au) at (0.5,1.25) [vertex,inner sep=.25pt,minimum size=.25pt,label=above:]{};
    \node (Au) at (1.5,.75) [vertex,inner sep=.25pt,minimum size=.25pt,label=above:]{};
    \node (Au) at (1.5,1) [vertex,inner sep=.25pt,minimum size=.25pt,label=above:]{};
    \node (Au) at (1.5,1.25) [vertex,inner sep=.25pt,minimum size=.25pt,label=above:]{};           
  \end{tikzpicture}} \hspace{.5in}\subcaptionbox{The graphene
  graph. \label{subfig:graphene}}{
  \includegraphics[width=0.35\linewidth]{./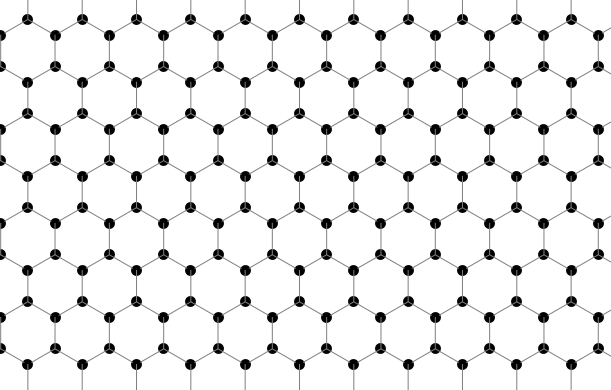}}\\
\subcaptionbox{The graphane graph. \label{subfig:graphane}}{
        \includegraphics[width=0.35\linewidth]{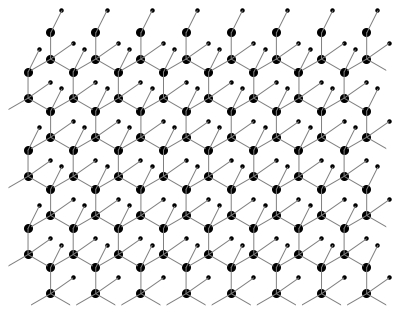}}
    \caption{Examples of $\Z_2$ periodic graphs.} 
  \end{figure}
\end{example}

The graph\emph{a}ne is just the decoration of the graph\emph{e}ne
adding an hydrogen atom for each carbon atom (see
Figure~\ref{subfig:graphane}).
Here, we have $\sigma(\Delta^\G)=[0,3/4] \cup [5/4,2]$ for the
standard weight, hence $\SG^\G=(3/4,5/4)$.

\begin{definition}
  We say that a graph $\G$ has a \emph{centre vertex} if there exists
  a vertex $v_0$ and a subset $A(v_0) \subset E_{v_0}$ such that $\G -
  A(v_0)$ is a tree (and in particular connected).  We call the edges
  in $A(v_0)$ \emph{cycle edges}.
\end{definition}
A centre vertex is a vertex where all cycles of the graph $\G$ meet.
Note that $\{v_0\}$ is in the neighbourhood of $A(v_0)$ (see
Definition~\ref{def:admissible}).

Is clear that if $\G$ is a tree, then $\G$ does not have a centre
vertex (as a tree is connected).  Moreover, if $\G$ is a cycle, then
any vertex is a centre vertex. 
By definition, if $\G$ has centre vertex $v_0$, then $\Betti(\G)=|A(v_0)|$.

We are now able to prove the following sufficient condition for the
existence of magnetic gaps (i.e., for $\MSG\ne \emptyset$). We will 
use this result (in the case of standard weights) in
the examples presented in Section~\ref{sec:examples}.
Recall that we allow loops and multiples edges in the graph $\G$.

\begin{theorem}
  \label{TheoremSpectralGaps2}
  Let $\W=(\G,m)$ be a weighted graph.  If $v_0$ is a centre vertex
  with cycle edges $A(v_0)$ and let
  \begin{equation}
    \label{eq:weight.cond}
    \delta
    := \rho(v_0) 
     - \sum_{e\in A(v_0)}\frac{m_e}{m((v_0)_e)} -\frac{m( A(v_0))}{m(v_0)}
  \end{equation}
  where $\rho(v_0)=m(E_{v_0}))/m(v)$ is the relative weight at $v_0$.
  Then the Lebesgue measure of the magnetic spectral gaps set is larger
  or equal to $\delta$. In particular, if $\delta>0$, then $\MSG^\W\neq\emptyset$.
\end{theorem}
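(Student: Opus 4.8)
The plan is to instantiate the bracketing machinery of Theorem~\ref{thm:technique} together with the measure estimate of Proposition~\ref{prp:key-obs} for the specific choice $E_0=A(v_0)$ and $V_0=\{v_0\}$, and then to show that the resulting trace difference equals $\delta$. First I would record the two structural facts guaranteed by the centre-vertex hypothesis: since $v_0$ is a centre vertex, $\EG=\G-A(v_0)$ is an induced (spanning) tree, so the vector potential on $\EG$ is cohomologous to $0$ and $\Delta^{\EM}$ is \emph{independent} of $\alpha$; and $\{v_0\}$ is in the neighbourhood of $A(v_0)$, so the vector potential on $\VG=\G-\{v_0\}$ can be gauged away and $\Delta^{\VM}$ is likewise $\alpha$-independent. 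Because $\EG$ is a spanning tree, every vector potential on $\G$ is cohomologous to one supported in the $\Betti(\G)=|A(v_0)|$ cycle edges $A(v_0)$, and cohomologous potentials yield unitarily equivalent DMLs with the same spectrum; hence it suffices to range over $\alpha$ supported in $A(v_0)$, and for every such $\alpha$ Theorem~\ref{thm:technique} gives $\Delta^{\EM}\preccurlyeq\Delta^\W_\alpha\preccurlyeq\Delta^{\VM}$.

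Next I would apply Proposition~\ref{prp:key-obs} to the family $\mathcal T=\{\Delta^\W_\alpha\mid\alpha\in\mathcal A(\G)\}$ with $S^-=\Delta^{\EM}$ and $S^+=\Delta^{\VM}$; note that all three operators have spectrum in $[0,2\rho_\infty]$ (for $\Delta^{\EM}$ because $\rho^-\le\rho$, for $\Delta^{\VM}$ because it is a compression of $\Delta^\W_\alpha$). Edge virtualisation keeps all vertices, so $n^-=|V(\EG)|=|V(\G)|$, while virtualising the single vertex $v_0$ gives $n^+=|V(\VG)|=|V(\G)|-1$, whence $n^--n^+=1$. The proposition then yields $\mu\bigl(\bigcup_{\alpha}\sigma(\Delta^\W_\alpha)\bigr)\le\Tr(\Delta^{\VM})-\Tr(\Delta^{\EM})+2\rho_\infty$. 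Taking complements inside $[0,2\rho_\infty]$ and using the definition of $\MSG^\W$ gives
\[
  \mu(\MSG^\W)
  =2\rho_\infty-\mu\Bigl(\bigcup_{\alpha}\sigma(\Delta^\W_\alpha)\Bigr)
  \ge \Tr(\Delta^{\EM})-\Tr(\Delta^{\VM}),
\]
so the entire statement reduces to the trace identity $\Tr(\Delta^{\EM})-\Tr(\Delta^{\VM})=\delta$.

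The remaining step, which I expect to be the most delicate, is this trace computation, carried out on the diagonal of the matrix representation from Subsection~\ref{MatrixDML}. The key bookkeeping observation is that, since $\G-A(v_0)$ is a tree and hence loop-free, \emph{every} loop of $\G$ must lie in $A(v_0)$ and be based at $v_0$; in particular no vertex other than $v_0$ carries a loop. Consequently both $\EG$ (a tree) and $\VG$ (where the loops at $v_0$ have been deleted and $v_0$ removed) are loop-free at all their vertices, so each diagonal entry is simply the corresponding relative weight: $\Tr(\Delta^{\VM})=\sum_{v\ne v_0}\rho(v)$ and $\Tr(\Delta^{\EM})=\sum_{v}\rho^-(v)$. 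Splitting $A(v_0)$ into loops (each lowering $\rho(v_0)$ by $2m_e/m(v_0)$ in $\EG$) and non-loops $e$ with far endpoint $(v_0)_e$ (each lowering $\rho(v_0)$ by $m_e/m(v_0)$ and $\rho((v_0)_e)$ by $m_e/m((v_0)_e)$), a direct computation gives
\[
  \Tr(\Delta^{\EM})-\Tr(\Delta^{\VM})
  =\rho(v_0)-\sum_{e\in A(v_0)}\frac{m_e}{m((v_0)_e)}-\frac{m(A(v_0))}{m(v_0)}
  =\delta,
\]
where loops contribute $m_e/m(v_0)$ to the middle sum since $(v_0)_e=v_0$. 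Substituting this into the previous display yields $\mu(\MSG^\W)\ge\delta$, and in particular $\MSG^\W\ne\emptyset$ whenever $\delta>0$.
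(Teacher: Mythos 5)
Your proposal is correct and follows essentially the same route as the paper: the same choice $E_0=A(v_0)$, $V_0=\{v_0\}$, the same bracketing via Theorem~\ref{thm:technique} (with the spanning-tree/cohomology argument of Remark~\ref{RemarkTree} to cover all vector potentials), the measure bound via Proposition~\ref{prp:key-obs} with $n^--n^+=1$ (which the paper carries out as the equivalent gap-sum estimate), and the identical trace identity $\Tr(\Delta^{\EM})-\Tr(\Delta^{\VM})=\delta$, which you organise by splitting $A(v_0)$ into loops and non-loops while the paper tabulates $\rho^-$ over $v_0$, $B_{v_0}$ and the remaining vertices --- the same computation.
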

\begin{proof}
  Let $\alpha$ be any vector potential and consider the virtualised graphs
  $\EG=\G -A(v_0)$ and $\VG=\G-\{v_0\}$.  As $\EG$ is a
  tree, we have that $[0,2\rho_\infty] \setminus J\subset \MSG^\W$,
  where $J$ is the union of the bracketing interval (cf., Eq.~\eqref{eq:j.gaps}).  
  In particular, the
  measure of $[0,2\rho_\infty] \setminus J$ is smaller or equal to the
  measure of $\MSG^\W$.  As $\lambda_1(\Delta^{\EM})=0$, the measure
  of $[0,2\rho_\infty] \setminus J$ can be estimated from below
  by 
  \begin{align}
    \nonumber
    \sum_{k=1}^{n-1}
      \bigl(\lambda_{k+1}\bigl(\Delta^{\EM}\bigr)
      - \lambda_k\bigl(\Delta^{\VM}\bigr)\bigr)
    =& \sum_{k=1}^{n}\lambda_{k}\bigl(\Delta^{\EM}\bigr)
     -\sum_{k=1}^{n-1}\lambda_k\bigl(\Delta^{\VM}\bigr)\\
    \label{eq:step0}
    &= \Tr\bigl(\Delta^{\EM} \bigr) -   \Tr\bigl(\Delta^{\VM} \bigr),          
  \end{align} 
  so we need to calculate $\Tr\bigl(\Delta^{\EM} \bigr)$ and
  $\Tr(\Delta^{\VM})$ (see Proposition~\ref{prp:key-obs}).

  \myparagraph{Step 1: Trace of $\Delta^{\EM}$.}  Let
  $\EM=(\EG,\Em)$ and recall that
  $V(\G-A(v_0))=V(\G)$, $E(\G-A(v_0))=E(\G)\setminus A(v_0) $; the
  weights on $V(\G-A(v_0))$ and $E(\G-A(v_0))$ coincide with the
  corresponding weights on $\W$.  The relative weights of $\EM$ are
  \begin{equation*}
    \rho^-(v)
    =
    \begin{cases}
      \rho^{\W}(v)-\dfrac{m \left( E(v)\right) +
     m \left( A(v_0)\right) }{m(v)}, &\text{if $v=v_0$,}\\[2ex]
      \rho^{\W}(v)-\dfrac{m\left(  A(v_0)\cap E_v\right) }{m(v)}, 
      &\text{if $v\in B_{v_0}$,}\\[2ex]
      \rho^{\W}(v), &  \text{otherwise,}
    \end{cases}
  \end{equation*}
  where 
  \begin{equation*}
    B_{v_0}
    = \{v \in V(\G) \mid 
        v=(v_0)_e \text{ for some } e\in A(v_0) \text{ with }
             v\neq v_0 \}.
  \end{equation*}
  Since $v_0$ is a centre vertex, the only loops (that $\G$ could
  possibly have) must be attached to $v_0$.  The trace of
  $\Delta^{\EM}$ is now
  \begin{align}
    \nonumber
    \Tr\bigl(\Delta^{\EM}\bigr)&=\sum_{k=1}^n \lambda_k\bigl(\Delta^{\EM} \bigr)
    =\sum_{v\in V(\G)} \rho^-(v)\\
    \label{eq:step1}
    &= \sum_{v\in V(\G)}\rho^{\W}(v)-\dfrac{m \left( E(v_0)\right) +
         m \left( A(v_0)\right) }{m(v_0)}-
      \sum\limits_{v\in B_{v_0}}\dfrac{m\left(  A(v_0)\cap E_v\right) }{m(v)}.
  \end{align}

  \myparagraph{Step 2: Trace of $\Delta^{\VM}$.} Let $\W^+=\left(\VG,m^+ \right) $, then the trace of
  $\Delta^{\VM}$ is given by
  \begin{equation}
    \label{eq:step2}
    \Tr\bigl(\Delta^{\VM}\bigr)
    =\sum_{k=1}^{n-1} \lambda_k\bigl(\Delta^{\W}\bigr)
    =\sum_{\substack{v\in V(\G) \\v\neq v_0}} \rho^{\W}(v).
  \end{equation}
  
  Combining Equations~\eqref{eq:step0}, \eqref{eq:step1}
  and~\eqref{eq:step2} we obtain
  \begin{align*}
    \Tr\bigl(\Delta^{\EM} \bigr) - \Tr\bigl(\Delta^{\VM} \bigr)
    & =\rho^{\W}(v_0)
             -\frac{m(E(v_0)) + m(A(v_0))}{m(v_0)}
             -\sum_{v\in B_{v_0}} \frac {m(A(v_0)\cap E_v)}{m(v)}\\
    & =\rho^{\W}(v_0)
             -\frac{m(E(v_0)) + m \left( A(v_0)\right)}{m(v_0)}
             - \sum_{e\in A(v_0)\setminus E(v_0)}\frac{m_e}{m((v_0)_e)}\\         
    & =\rho^{\W}(v_0)
             -\frac{m \left( A(v_0)\right) }{m(v_0)}
             - \sum_{e\in A(v_0)}\dfrac{m_e}{m((v_0)_e)}=\delta         
  \end{align*}
  as defined in Equation~\eqref{eq:weight.cond}.  The last assertion
  is a simple consequence.
\end{proof}
\vspace{1cm}

\begin{remark}
  \label{rem:gen.weights}
  \indent
  \begin{enumerate}
  \item In the proof, we used the spectral localising
    inclusion~\eqref{eq:bracketing}.  If the weighted graph is
    bipartite and if the weight is normalised (or more generally, if
    the relative weight is constant), and if we find $B\subset
    \MSG^\W$ then we have also $\kappa(B)\subset \MSG^\G $ by
    Proposition~\ref{prp:bipartite.sym}.
  \item For applications, in particular, for the examples of Section~\ref{sec:examples},
    we explicitly write Condition~\ref{eq:weight.cond} for the most
    important weights (see Section~\ref{ssec:weight}). Let $v_0$ be a centre vertex
    with cycle edges $A(v_0)$:
    \begin{enumerate}[label=(\roman*)]
    \item If the graph has the \emph{standard weights}, the condition
      becomes:
      \begin{equation}
        \delta= 1 - \sum_{e\in A(v_0)}\dfrac{1}{\deg((v_0)_e)}-
        \dfrac{|A(v_0)|}{\deg(v_0)}\;,
      \end{equation}
      where $|A(v_0)|$ denote the cardinality of the set $A(v_0)$.
    \item Now, if we have the \emph{combinatorial weights}, the
      condition becomes simply:
      \begin{equation}
       \delta= \deg(v_0) - 2\;|A(v_0)|\;.
      \end{equation}  
    \item For the \emph{electric circuit weights}, the condition is:
      \begin{equation}
        \delta = m(E_{v_0}) - 2\;m(A(v_0))\;.
      \end{equation}    
    \item For the \emph{normalised weights}, the condition is:
      \begin{equation}
       \delta= 1 - \sum_{e\in A(v_0)}\dfrac{m_e}{m(E_{(v_0)_e})}
	         -\dfrac{m( A(v_0))}{m(E_{v_0})}\;.
      \end{equation}      
    \end{enumerate}
    In all of the previous cases, if $\W$ is a graph with the
    corresponding weights and meets the condition $\delta>0$, then we can assure
    the existence of magnetic spectral gaps, i.e., $\MSG^\W\ne
    \emptyset$.
  \end{enumerate}
\end{remark}

\begin{example}\label{exa:cycle-edge}
  Consider the next two graphs in Figure~\ref{fig:theorem}, both with
  the standard weights.  In both graphs, $v_0$ is a centre vertex with
  cycle edges $A(v_0)=\left\lbrace e_1,e_2\right\rbrace $. The
  strategy to produce gaps is to raise the degree of the vertices
  $(v_0)_{e_1}$ and $(v_0)_{e_2}$.  In the first case of the
  Figure~\ref{subfig:nogaps} we have no magnetic spectral gaps, i.e.,
  $\MSG^{\G_1}=\emptyset$ while for the second graph~\ref{subfig:gaps}
  we have 
  \begin{equation*}
   \delta=1- \dfrac{1}{\deg((v_0)_{e_1})}- \dfrac{1}{\deg((v_0)_{e_2})}-
    \dfrac{2}{\deg(v_0)}=1-\frac 14-\frac 15-\frac 24=\frac {1}{20}
    >0,
  \end{equation*}
  then $\MSG^{\G_2}\neq \emptyset$ as a consequence of
  Theorem~\ref{TheoremSpectralGaps2}.  This example also shows that
  Condition~\eqref{eq:weight.cond} is sufficient but not necessary:
  consider the graph $\G_2$ with only one decorating edge at each
  vertex $(v_0)_{e_1}$ and $(v_0)_{e_2}$.  The corresponding graph
  still has a spectral gap, although $\delta=1-1/3-1/3-2/4=-1/6<0$.

  \begin{figure}[h]
      \centering \subcaptionbox{The graph
        $\G_1$. \label{subfig:nogaps}}%
      [.3\linewidth]{\begin{tikzpicture}[auto,
          vertex/.style={circle,draw=black!100,fill=black!100, thick,
            inner sep=0pt,minimum size=1mm},scale=.5] \node (0) at
          (2,2) [vertex,label=above:] {}; \node (1) at (0,0)
          [vertex,label=left:] {}; \node (2) at (4,0)
          [vertex,black,label=above:{\color{black}{$v_0$}}] {}; \node (3)
          at (2,-2) [vertex,label=above:] {}; \node (4) at (8,0)
          [vertex,label=below:] {}; \node (5) at (6,-2)
          [vertex,label=below:] {}; \path [-latex](1) edge node[below]
          {} (0); \path [-latex,black](0) edge node[below] {$e_1$} (2);
          \path [-latex](2) edge node[below] {} (3); \path [-latex](3)
          edge node[below] {} (1); \path [-latex](2) edge node[below]
          {} (4); \path [-latex](4) edge node[below] {} (5); \path
          [-latex](2) edge node[below] {} (4); \path [-latex,black](5)
          edge node[below] {$e_2$} (2);
         \end{tikzpicture}}
    \subcaptionbox{The graph $\G_2$.\label{subfig:gaps}}
      [.3\linewidth]{\begin{tikzpicture}[auto, vertex/.style={circle,draw=black!100,fill=black!100, thick,
                            inner sep=0pt,minimum size=1mm},scale=.5]
                \node (0) at (2,2) [vertex,label=above:] {};
             	  \node (01) at (.5,2) [vertex,label=above:] {};
                \node (02) at (3.5,2) [vertex,label=above:] {};          
                \node (1) at (0,0) [vertex,label=left:] {};
                \node (2) at (4,0) [vertex,black,label=above:{\color{black}{$v_0$}}] {};
                \node (3) at (2,-2) [vertex,label=above:] {};
                \node (4) at (8,0) [vertex,label=below:] {};
                \node (5) at (6,-2) [vertex,label=below:] {};
            	\path [-latex](1) edge node[below] {} (0);
            	\path [-latex](0) edge node[below] {} (01);
            	\path [-latex](0) edge node[below] {} (02);
            	\path [-latex,black](0) edge node[below] {$e_1$} (2);             
            	\path [-latex](2) edge node[below] {} (3);
            	\path [-latex](3) edge node[below] {} (1);
            	\path [-latex](2) edge node[below] {} (4);
            	\path [-latex](4) edge node[below] {} (5);
            	\path [-latex](2) edge node[below] {} (4);   	
            	\path [-latex,black](5) edge node[below] {$e_2$} (2); 
             	  \node (01) at (7.5,-2) [vertex,label=above:] {};
                \node (02) at (4.5,-2) [vertex,label=above:] {};   
                \node (03) at (6,-.5) [vertex,label=above:] {};
               \path [-latex](5) edge node[below] {} (01);
            	\path [-latex](5) edge node[below] {} (02);
            	\path [-latex](5) edge node[below] {} (03);                 	  	
            \end{tikzpicture}  } 
          \caption{Producing magnetic spectral gaps by decoration.
            The graph $\G_2$ is obtained from $\G_1$ by adding pendant
            edges at $(v_0)_{e_1}$ and
            $(v_0)_{e_2}$. \label{fig:theorem}}
          \label{fig:graphgaps}
  \end{figure}
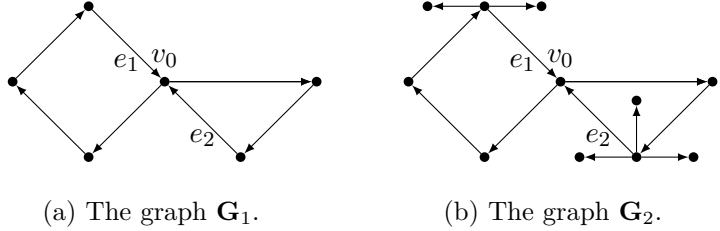
\end{example}
As a consequence, we have the following topological characterisation
for the existence of magnetic spectral gaps for graphs with Betti
number $1$: %

\begin{corollary}
  \label{cor:spectral_gap}
  Let $\W=(\G,m)$ be a weighted graph with standard weights and Betti
  number $\Betti(\G)=1$.  Then the following conditions are equivalent:
  \begin{enumerate}
  \item 
    \label{spectral_gap.a}
    $\G$ has magnetic spectral gap (i.e., $\MSG^\G\neq \emptyset$);
  \item 
    \label{spectral_gap.b}
    $\G$ is not a cycle graph;
  \item 
    \label{spectral_gap.c}
    $\G$ has a vertex of degree $1$.
  \end{enumerate}
\end{corollary}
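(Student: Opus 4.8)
The plan is to establish the three-way equivalence by combining the purely combinatorial equivalence (b)$\Leftrightarrow$(c) with two spectral statements: (b)$\Rightarrow$(a), which is a direct application of Theorem~\ref{TheoremSpectralGaps2}, and its converse (a)$\Rightarrow$(b), which requires an explicit computation of the cycle spectrum.

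First I would dispose of (b)$\Leftrightarrow$(c), which uses only that $\G$ is connected with $\Betti(\G)=1$. This forces $|E(\G)|=|V(\G)|$, hence $\sum_v \deg(v)=2|E(\G)|=2|V(\G)|$ (a loop contributing $2$, consistently with the paper's degree convention). If $\G$ has a vertex of degree $1$ it is not $2$-regular and hence not a cycle graph, giving (c)$\Rightarrow$(b). Conversely, if $\G$ is not a cycle graph then not all degrees equal $2$; since every vertex has degree $\ge 1$ by connectedness and the degrees average to exactly $2$, some vertex must have degree $<2$, i.e.\ degree $1$, giving (b)$\Rightarrow$(c).

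Next, for (b)$\Rightarrow$(a) I would produce a centre vertex with $\delta>0$ and invoke Theorem~\ref{TheoremSpectralGaps2}. Since $\Betti(\G)=1$, the graph is unicyclic, and any vertex lying on its unique cycle $C$ is a centre vertex: removing a single cycle edge incident to it leaves a connected graph with Betti number $0$, i.e.\ a tree, so here $A(v_0)=\{e\}$ with $e$ a cycle edge at $v_0$. If $\G$ is not a cycle graph there is a vertex off $C$, and following a path from it back to $C$ shows that its attachment point $w\in C$ has $\deg(w)\ge 3$. Choosing $v_0=w$ and $e\in A(v_0)$ a cycle edge, the standard-weight formula of Remark~\ref{rem:gen.weights} gives $\delta=1-\tfrac1{\deg((v_0)_e)}-\tfrac1{\deg(v_0)}\ge 1-\tfrac12-\tfrac13=\tfrac16>0$, using $\deg(v_0)\ge 3$ and $\deg((v_0)_e)\ge 2$ since $(v_0)_e\in C$; the loop case $(v_0)_e=v_0$ gives $\delta=1-2/\deg(v_0)\ge\tfrac13>0$. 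Hence $\MSG^\G\ne\emptyset$.

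Finally, for (a)$\Rightarrow$(b) I would argue the contrapositive: if $\G$ is a cycle graph $C_n$ then $\MSG^\G=\emptyset$. Here the bracketing machinery is of no use, since it only certifies the \emph{presence} of gaps, so I would compute the spectrum directly. With standard weights any vector potential on $C_n$ is cohomologous to one carrying the total flux $\Phi$ on a single edge (as recalled in the paper's discussion of vector potentials on cycles), and the resulting DML is circulant with eigenvalues $\lambda_k=1-\cos\bigl(\tfrac{\Phi+2\pi k}{n}\bigr)$ for $k=0,\dots,n-1$. As $\Phi$ ranges over $[0,2\pi)$ and $k$ over $\{0,\dots,n-1\}$, the arguments $\tfrac{\Phi+2\pi k}{n}$ sweep out all of $[0,2\pi)$, so $\bigcup_\alpha\sigma(\Delta^\G_\alpha)=[0,2]$ and $\MSG^\G=\emptyset$. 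The main obstacle is precisely this converse direction: because Theorem~\ref{TheoremSpectralGaps2} is only a sufficient condition for gaps, ruling them out for cycles demands the explicit spectral sweep, together with the care needed to handle the multigraph cases (loops and double edges) uniformly in both directions.
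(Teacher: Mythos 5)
Your proof is correct, and its overall skeleton coincides with the paper's: the equivalence is closed by (i) an explicit spectral computation showing a cycle has no magnetic gaps, (ii) an elementary combinatorial argument for (b)$\Leftrightarrow$(c), and (iii) an application of Theorem~\ref{TheoremSpectralGaps2} at a vertex of degree at least $3$ on the unique cycle with a single cycle edge, yielding $\delta \ge 1-\tfrac12-\tfrac13=\tfrac16>0$. Two steps are executed differently, and the comparison is instructive. For (a)$\Rightarrow$(b) the paper avoids your circulant diagonalisation entirely: given $\lambda\in[0,2]$ it chooses $t$ with $\cos t = 1-\lambda$, places the \emph{constant} potential $\alpha_e=t$ on every edge of $C_n$, and observes that the constant function $\1$ is an eigenfunction of $\Delta^\G_\alpha$ with eigenvalue $1-\cos t=\lambda$; this exhibits each spectral value directly by a one-line computation, whereas your argument sweeps the flux through all $n$ band functions. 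Your version is also valid, but note one imprecision: with the flux $\Phi$ concentrated on a single edge the matrix is not literally circulant; it is only unitarily equivalent (via cohomology of potentials) to the circulant DML with uniform potential $\Phi/n$, which is what your eigenvalue formula actually diagonalises. For (b)$\Leftrightarrow$(c) the paper merely says it follows "by induction on the number of vertices", so your degree-sum argument (average degree exactly $2$ under $\Betti(\G)=1$, so any deviation from $2$-regularity forces a vertex of degree $1$) is in fact more complete than what is printed, and it handles loops and multiple edges uniformly. Likewise, your explicit treatment of the loop case in (b)$\Rightarrow$(a), where $(v_0)_e=v_0$ gives $\delta=1-2/\deg(v_0)\ge\tfrac13>0$, covers a degenerate configuration the paper's proof passes over silently (though its estimate remains valid there).
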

\begin{proof}
  ``\eqref{spectral_gap.a}$\Rightarrow$\eqref{spectral_gap.b}'':
  Suppose that $\G=C_n$.  Let now $\lambda
  \in [0,2]$ and $t\in [0,2\pi]$ be such that $\cos t=1-\lambda$.
  Consider a vector potential $\alpha$ given by $\alpha_e=t$ for all
  $e \in E(C_n)$.  We will show that
  $\lambda\in\sigma(\Delta_\alpha^\G)$. In fact, consider $\1(v)=1$
  for all $v\in V(\G)$, then
  \begin{equation*}
    (\Delta_\alpha^\W \1)(v)
    =1-\frac{\e^{-it}+\e^{it}}2
    =1-\cos t
    =\lambda \cdot \1(v). 
  \end{equation*}
  We have shown that $[0,2] \subset \UVP
  \sigma(\Delta^\G_\alpha)$, i.e., $\MSG^\G = \emptyset$.
  
  ``\eqref{spectral_gap.b}$\Rightarrow$\eqref{spectral_gap.c}'': Using
  the fact that $\Betti(\G)=1$, one can prove this by induction on the
  number of vertices.
  
  ``\eqref{spectral_gap.c}$\Rightarrow$\eqref{spectral_gap.a}'': Since
  $\G$ has Betti number $\Betti(\G)=1$ and since $\G$ has a vertex of
  degree $1$, there exists $v_0 \in V(\G)$ such that $v_0$ belongs to
  the cycle with $\deg v_0 \ge 3$, and it is adjacent with $v_1 \in
  V(\G)$ by an edge $e_1 \in E(\G)$ with $\deg v_1 \geq 2$.  Moreover,
  $v_0$ is a centre vertex with cycle edge $A(v_0)=\left\lbrace e_1
  \right\rbrace$.  As
  \begin{equation*}
    1-\sum_{e\in A(v_0)}\dfrac{1}{\deg((v_0)_e)}-
         \dfrac{|A(v_0)|}{\deg(v_0)}=1- \frac1{\deg_\G v_1}- \frac1{\deg_\G v_0} 
    \geq1- \frac12-\frac13=\dfrac16>0,
  \end{equation*} 
  we conclude from Theorem~\ref{TheoremSpectralGaps2} that $\MSG^\G\ne
  \emptyset$.
  
\end{proof}

\begin{remark}
  \label{rem:Gaps}
  \indent
  \begin{enumerate}
  \item Corollary~\ref{cor:spectral_gap} holds also for
    combinatorial weights: for
    ``\eqref{spectral_gap.a}$\Rightarrow$\eqref{spectral_gap.b}'' note
    that $C_n$ is a regular graph, hence the spectrum of the standard
    weight and the combinatorial weight is just related by a simple
    scaling.  For
    ``\eqref{spectral_gap.c}$\Rightarrow$\eqref{spectral_gap.a}'' note
    that the condition on the weights becomes $2=2|A(v_0)| <3\le \deg
    v_0$ (choose $v_0$ to be the vertex of degree larger than $2$). Finally
      ``\eqref{spectral_gap.b}$\Rightarrow$\eqref{spectral_gap.c}'' is 
      independent of the weights.
  \item If the graph $\G$ has the electric circuit weights, then
    ``\eqref{spectral_gap.a}$\Rightarrow$\eqref{spectral_gap.c}'' is
    no longer true.  In fact choose $\G=C_6$ as in
    Figure~\ref{fig:smaller} with $m_e=1$ if $e\neq e_1$ and
    $m_{e_1}=2$. For example, an easy calculation show that $0$ and $1\in \sigma(\Delta^\W)$ 
    but $0.3\in \MSG^\W$, then $\MSG^\W\neq \emptyset$ even if there is
    no any vertex of degree $1$.
  \item If the graph $\G$ has normalised weights, then in general
    ``\eqref{spectral_gap.a}$\Rightarrow$\eqref{spectral_gap.c}'' is
    not true. Choose $\G=C_6$ as in Figure~\ref{fig:smaller}
    with the following weights: on the edges $m_e=1$ if $e\neq e_1$ and $m_{e_1}=2$,
    on the vertices $m(v_i)=2$ for $i=1,2$ and $m(v_i)=1$ for $i=3,4,5,6$.
    It is easy to check that $0$ and $1\in \sigma(\Delta^\W)$ with
    $1/2\in \MSG^\W$. Then $\MSG^\W\neq
    \emptyset$ but again, there is no vertex of degree $1$.	
 \end{enumerate}
\end{remark}

\begin{example}
  Let $\W'=(\G',m')$ where $\G'=C_6$ and $m'$ are the standard
  weights, then by Corollary~\ref{cor:spectral_gap} we have
  $\MSG^{\G'}=\emptyset$. In order to create magnetic spectral gaps we
  add a new edge.  Let now $\W=(\G,m)$ where $\G$ is the graph $C_6$
  with an edge added to the cycle (see Figure~\ref{fig:cycle.deco})
  and $m$ the standard weights.  Then the Laplacian on $\G $ has a
  magnetic spectral gap by Corollary~\ref{cor:spectral_gap}. Now
  using the bracketing technique of Theorem~\ref{thm:technique} we can
  localise the position of the gaps.
  
  Consider $E_0=\{e_1\}$, and recall that
  any vector potential $\alpha$  can be supported on $e_1$. Consider also the
  edge virtualised weighted graph $\EM=(\EG,\Em)$ with $\EG=\G-E_0$. Then
  its spectrum is:
 \begin{equation*}
   \sigma \bigl( \Delta^{\EM}\bigr) 
   \approx \{0,0.116,0.5,0.713,1.145,1.638,1.889\}.
 \end{equation*}  
 Now, we have that $V_0=\{v_1\}$ is in the neighbourhood of $E_0$. Now
 consider the vertex virtualised weighted graph $\VM=(\VG,\Vm)$ with
 $\VG=\G - V_0$, then its spectrum is:
 \begin{equation*}
   \sigma \bigl(\Delta^{\VM}\bigr) 
   \approx \{0.121,0.358,0.744,1.256,1.642,1.879,2\}.
 \end{equation*}   
 Therefore, the bracketing intervals in which we can localise the
 spectrum is given by (see Figure~\ref{fig:cycle.deco}):
  \begin{align*}
    J_1\approx 
         \left[0,0.121\right],
         \quad 
    J_2\approx 
        \left[0.116,0.358\right],
        \quad
    J_3\approx 
         \left[0.5,0.744\right],
         \quad
    J_4\approx  
         \left[0.713,1.256\right],\\
     J_5\approx 
         \left[1.145,1.642\right],
         \quad
     J_6\approx 
          \left[1.638,1.879\right],
          \quad and \quad   
     J_7\approx  
          \left[1.889,2\right].                     
  \end{align*} 
 
  In conclusion, we have the following spectral localising  inclusion for any vector potential $\alpha$:
 
  \begin{equation}
    \UVP \sigma(\Delta^{\G}_\alpha) 
    \subset  J
    = \bigcup_{k=1}^{7} 
        J_i\subsetneq [0,2].
  \end{equation}  
  
  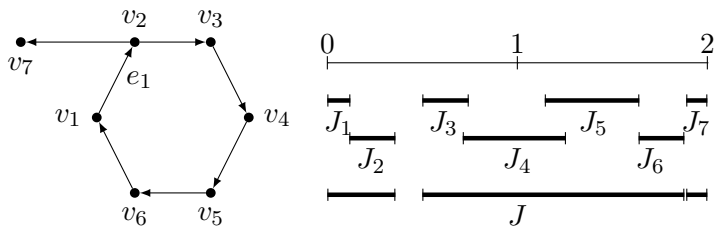
\begin{figure}[h]
      \centering
    \begin{tikzpicture}[auto, vertex/.style={circle,draw=black!100,fill=black!100, thick,
                    inner sep=0pt,minimum size=1mm}]
        \node (A) at (1/2,-2/2) [vertex,label=below:$v_5$] {};
        \node (B) at (-1/2,-2/2) [vertex,label=below:$v_6$] {};
        \node (C) at (-2/2,0) [vertex,label=left:$v_1$] {};
        \node (D) at (-1/2,2/2) [vertex,label=above:$v_2$] {};
        \node (E) at (1/2,2/2) [vertex,label=above:$v_3$] {};
        \node (F) at (2/2,0) [vertex,label=right:$v_4$] {};
        \node (G) at (-2,1) [vertex,label=below:$v_7$] {};
        
    	\path [-latex](C) edge node[right] {$e_1$} (D);
    	\path [-latex](D) edge node[below] {} (E);
    	\path [-latex](E) edge node[left] {} (F);
    	\path [-latex](F) edge node[left] {} (A);
    	\path [-latex](A) edge node[above] {} (B);
    	\path [-latex](D) edge node[above] {} (G);
    	\path [-latex](B) edge node[above] {} (C);
      \end{tikzpicture}
      \begin{tikzpicture}[scale=2.5]
          	\draw[-] (0,0) -- (2,0) ; 
          	\foreach \x in  {0,1,2} 
          	\draw[shift={(\x,0)},color=black] (0pt,1pt) -- (0pt,-1pt);
          	\foreach \x in {0,1,2} 
          	\draw[shift={(\x,0)},color=black] (0pt,0pt) -- (0pt,0pt) node[above] 
          	{$\x$};
          	 
          	\draw[|-|] (0,-.2) -- (0.121073,-.2);
          	\draw[] (0.06,-.2)node[below] {$J_1$};
          	\draw[line width=.6mm]  (0,-.2) -- (0.121073,-.2);  
          	\draw[|-|] (0.115543,-.4) -- (0.3581,-.4);
          	\draw[] (0.23,-.4)node[below] {$J_2$};
          	\draw[line width=.6mm]  (0.115543,-.4) -- (0.3581,-.4);   
          	\draw[|-|] (0.5,-.2) -- (0.744166,-.2);
          	\draw[] (0.61,-.2)node[below] {$J_3$};
          	\draw[line width=.6mm]  (0.5,-.2) -- (0.744166,-.2);  
          	\draw[|-|] (0.712721,-.4) -- (1.25583,-.4);
          	\draw[] (1,-.4)node[below] {$J_4$};
          	\draw[line width=.6mm]  (0.712721,-.4) -- (1.25583,-.4); 
          	\draw[|-|] (1.14451,-.2) -- (1.6419,-.2);
          	\draw[] (1.4,-.2)node[below] {$J_5$};
          	\draw[line width=.6mm]  (1.14451,-.2) -- (1.6419,-.2);  
          	\draw[|-|] (1.63822,-.4) -- (1.87893,-.4);
          	\draw[] (1.7,-.4)node[below] {$J_6$};
          	\draw[line width=.6mm]  (1.63822,-.4) -- (1.87893,-.4); 
          	\draw[|-|] (1.88901,-.2) -- (2,-.2);
          	\draw[] (1.94,-.2)node[below] {$J_7$};
          	\draw[line width=.6mm]  (1.88901,-.2)
          	--(2,-.2);
          	\draw[|-|] (0,-.7) -- (0.3581,-.7);
          	\draw[] (1,-.7)node[below] {$J$};
          	\draw[line width=.6mm] (0,-.7) -- (0.3581,-.7);   
          	\draw[|-|] (0.5,-.7) -- (1.87893,-.7);
          	\draw[line width=.6mm] ((0.5,-.7) -- (1.87893,-.7); 
          	\draw[|-|] (1.88901,-.7) -- (2,-.7);
          	\draw[line width=.6mm]  (1.88901,-.7)
          	--(2,-.7);          	          		   
          	      	     	           	    	         	      			
          	\end{tikzpicture}
                \caption{Example of bracketing intervals for the cycle
                  graph $C_6$ with one pendant
                  edge.\label{fig:cycle.deco}}
    \end{figure} 

\end{example}

%
\section{Periodic graphs}
\label{sec:periodic}

We begin recalling the definition and some useful facts concerning periodic graphs,
discrete Floquet theory and its relation to the vector potential.
%

\subsection{Periodic graphs and fundamental domains}

The preceding two sections refer to finite graphs.  We consider here
certain classes of infinite graphs, namely $\Gamma$-periodic graphs,
where $\Gamma=\left\langle g_1, g_2, \dots, g_r \right\rangle $ is a
finitely generated and Abelian group with generators $g_1, g_2, \dots,
g_r$. In crystallography, one typically considers $\Gamma=\Z^r$
(see~\cite[Sec.~6.2]{sunada:13}), with generators given,
for example, by $g_1=\left( 1,0,\dots,0\right),g_2=\left(
  0,1,\dots,0\right),\dots,g_r=\left( 0,0,\dots,1\right)$.  We say
that a graph $\Gt$ is \emph{$\Gamma$-periodic} if there is a
free and transitive action of $\Gamma$ on $\Gt$ with compact quotient
$\G=\Gt /\Gamma$ and which is orientation
preserving, i.e., $\Gamma$ acts both on $V$ and $E$ such that
\begin{equation*}
  \bd_+(\gamma e) = \gamma (\bd_+e)
  \quadtext{and}
  \bd_-(\gamma e) = \gamma (\bd_-e)
  \qquad\text{for all $\gamma\in\Gamma$ and $e \in E$.}
\end{equation*}
To avoid trivial situations we assume that the periodic graph $\Gt$ is connected.
As we use the multiplicative notation for the action, we also write
$\Gamma$ multiplicatively.
In particular, we have
\begin{equation*}
  E_{\gamma v} =\gamma E_v
  \qquad\text{for all $\gamma\in\Gamma$ and $e \in E$.}
\end{equation*}
A $\Gamma$-periodic graph $\Gt$ can also be seen as a covering
(see~\cite[Ch.~5 and~6]{sunada:13} or~\cite{sunada:08} for more
details):
\begin{equation*}
  \pi\colon \Gt \rightarrow \G=\Gt /\Gamma.
\end{equation*}

We say that a weighted graph $\Wt =(\Gt,\m)$ is \emph{$\Gamma$-periodic}
if $\Gt$ is $\Gamma$-periodic and if the action of $\Gamma$ on $\Gt$
preserves the corresponding weights, i.e., if
\begin{equation*}
  \m(\gamma v)=\m(v) 
  \quad\text{for all $v \in V$}
  \qquadtext{and}
  \m_{\gamma e}=\m_e
  \quad\text{for all $e \in E$ and $\gamma\in\Gamma$.}
\end{equation*}
Note that the standard or combinatorial weights on a discrete graph
satisfy these conditions automatically.
A $\Gamma$-periodic weighted graph $\Wt =(\Gt ,\m)$ naturally induces a weight $m$ 
on the quotient graph, given by $\m\circ \pi^{-1}$. Notice that this weight 
is well-defined since $\m$ is $\Gamma$-invariant. We will denote this
weight on the quotient simply by $m$.
    
We consider first the following convenient notation adapted to the description
of periodic graphs (see, e.g.,~\cite[Sec.~7]{lledo-post:08}) 
and the important notion of an edge index (see 
\cite[Subsections~1.2 and 1.3]{koro-sabu:14} 
\begin{definition}
  \label{def:fund.dom}
  Let $\Gt=(V,E,\bd)$ be a $\Gamma$-periodic graph.
  \begin{enumerate}
    \item
      \label{fund.dom.a}
      A \emph{vertex}, respectively \emph{edge} \emph{fundamental domain} on a $\Gamma$-periodic graph is
      given by two subsets $\D^V \subset V$ and $\D^E\subset E$
      satisfying
      \begin{align*}
        V(\Gt)&=\bigcup_{\gamma\in\Gamma}\gamma\D^V \quadtext{and}
        \gamma_1 \D^V \cap \gamma_2\D^V=\emptyset
        \quad\text{if $\gamma_1\neq \gamma_2$,}\\
        E(\Gt)&=\bigcup_{\gamma\in\Gamma}\gamma\D^E \quadtext{and}
        \gamma_1 \D^E \cap \gamma_2\D^E=\emptyset \quad\text{if
          $\gamma_1\neq \gamma_2$}
      \end{align*}
      with $\D^E \cap E(V \setminus \D^V)=\emptyset$ (i.e., an edge in
      $\D^E$ has at least one endpoint in $\D^V$).  We often simply
      write $\D$ for a fundamental domain, where $\D$ stands either
      for $\D^V$ or $\D^E$.
      
    \item
      \label{fund.dom.b}
      A \emph{(graph) fundamental domain} of a periodic graph $\Gt$ is
      a partial subgraph
      \begin{equation*}
        \subG =(\D^V,\D^E,\bd \restriction_{\D^E}),
      \end{equation*}
      where $\D^V$ and $\D^E$ are vertex and edge fundamental
      domains, respectively. We call
      \begin{equation*}
        B(\subG,\Gt):=E(\D^V,V \setminus \D^V)
      \end{equation*}
      the set of \emph{connecting edges} of the fundamental domain
      $\subG$ in $\Gt$.\footnote{\label{fn:bridges}
        In~\cite{koro-sabu:14}, Korotyaev and Saburova used the term
        \emph{bridge} for \emph{all} edges connecting a fundamental
        domain with another (non-trivial) translate of the fundamental
        domain.  Korotyaev and Saburova have hence twice as many such
        edges as we have in $B(\subG,\G)$.  Although the name
        ``bridge'' is quite intuitive, it is already used in graph
        theory in a different context; namely for an edge that
        disconnects a graph if it is removed.}
    \end{enumerate}
\end{definition}

\begin{remark}
  \label{rem:coord}
  \indent
  \begin{enumerate}
  \item
  \label{coord.a}
    Note that once a fundamental domain $\D^V$ has been specified
    in a $\Gamma$-periodic graph $\Gt$, we can write any $v\in V(\Gt)$
    uniquely as $v=\xi(v)v_0$ for a unique pair $(\xi(v), v_0) \in
    \Gamma \times \D^V$.  This follows from the fact that the action
    is free and transitive.  We call $\xi(v)$ the
    \emph{$\Gamma$-coordinate of $v$} (with respect to the fundamental
    domain $\D^V$).  Similarly we can define the coordinates for the
    edges: any $e\in E(\Gt)$ can be written as $e=\xi(e)e_0$ for a
    unique pair $(\xi(e), e_0) \in \Gamma \times \D^E$.
    In particular, we have
    \begin{equation*}
      \xi(\gamma v)=\gamma \xi(v)
      \qquadtext{and}
       \xi(\gamma e)=\gamma \xi(e).
    \end{equation*}
  \item
    \label{coord.b}
    Once we have chosen a fundamental domain $\subG=(\D^V,\D^E,\bd)$, 
    we can embed $\subG$
    it into the quotient $\G=\Gt/\Gamma$ of the covering $\pi \colon
    \Gt \rightarrow \G=\Gt / \Gamma$ by
    \begin{equation*}
      \D^V \rightarrow V(\G)=V/\Gamma, \quad
      v \mapsto [v]
      \qquadtext{and}
      \D^E \rightarrow E(\G)=E/\Gamma, \quad
      e \mapsto [e],
    \end{equation*}
    where $[v]$ and $[e]$ denote the $\Gamma$-orbits of $v$ and $e$,
    respectively.  By definition of a fundamental domain, these maps
    are bijective.  Moreover, if $\bd_\pm e=v$ in $\subG$, then also
    $\bd_\pm ([e]) = [v]$ in $\G$, i.e., the embedding is
    a (partial) graph homomorphism. 
  \end{enumerate}
\end{remark}

\begin{definition}
  \label{def:index}
  Let $\Gt=(V,E,\bd)$ be a $\Gamma$-periodic graph with fundamental
  graph $\subG=(\D^V,\D^E,\bd)$.  We define the \emph{index} of an
  edge $e \in E$ as
  \begin{equation*}
    \ind_\subG(e) := \xi(\bd_+e)\left(\xi(\bd_-e)\right)^{-1} \in \Gamma.
  \end{equation*}
\end{definition}

In particular, we have $\ind_\subG \colon E \mapsto \Gamma$, and
$\ind_\subG(e) \ne 1_\Gamma$ iff $e \in \bigcup_{\gamma \in \Gamma}
\gamma B(\subG,\Gt)$, i.e., the index is only non-trivial on the
(translates of the) connecting edges. Moreover, the set of indices and
its inverses generate the group $\Gamma$.

Since the index fulfils $\ind_\subG(\gamma e)=\gamma \ind_\subG(e)$ for all $\gamma
\in \Gamma$ by Remark~\ref{rem:coord}~\eqref{coord.a}, we can extend
the definition to the quotient $\G=\Gt/\Gamma$ by setting
$\ind_{\G}([e])=\ind_\subG(e) $ for all $e\in E(\Gt)$. We denote
also $[B(\subG,\Gt)]:=\{ [e] \mid e\in B(\subG,\Gt)\}$.

\subsection{Discrete Floquet theory}

Let $\Wt=(V,E,\bd,\m)$ be a weighted $\Gamma$-periodic graph and
fundamental domain $\subG=(\D^V,\D^E,\bd)$ with corresponding weights
inherited from $\Wt$. In this context one has the natural Hilbert space
identifications
\begin{equation*}
 \ell_2(V,\m)
 \cong \ell_2(\Gamma) \otimes \ell_2(D^V,m)
 \cong \ell_2\bigl(\Gamma, \ell_2(\D^V,m)\bigr).
\end{equation*}
Roughly speaking, a discrete Floquet transformation is a partial
Fourier transformation which is applied only on the group part, i.e.,
\begin{equation*}
  F \colon \ell_2(\Gamma)\rightarrow L_2(\widehat \Gamma),
  \qquad
  \left(F \mathbf a\right) \left(\chi\right)
  :=\sum_{\gamma\in\Gamma} \overline{\chi(\gamma)} a_\gamma
\end{equation*}
for $\mathbf a=\left\lbrace a_\gamma \right\rbrace_{\gamma\in\Gamma}
\in \ell_2(\Gamma)$ and where $\widehat \Gamma$ denotes the character
group of $\Gamma$.  We adapt to the discrete context of graphs the
main results concerning Floquet theory needed later. We refer to
\cite[Section~3]{lledo-post:07} as well as~\cite{koro-sabu:14} for
details and additional motivation.

For any character $\chi\in\widehat{\Gamma}$ consider the space of \emph{equivariant functions} on vertices and edges
\begin{eqnarray*}
  \ell_2^\chi(V,m)
  &:=&\left\lbrace g\colon V \rightarrow \C \mid
    g(\gamma v)= \chi(\gamma)g(v) 
    \text{ for all  } v\in V \text{ and } \gamma\in\Gamma\right\rbrace, \\
    \ell_2^\chi(E,m)
  &:=&\left\lbrace \eta\colon E\rightarrow \C \mid
    \eta_{\gamma e}= \chi(\gamma)\eta_e 
    \text{ for all  } e\in E \text{ and } \gamma\in\Gamma\right\rbrace.
\end{eqnarray*}
These spaces have the natural inner product:
\begin{equation*}
  \left\langle g_1,g_2\right\rangle
  :=\sum_{v\in \D^V} g_1(v) \overline{g_2(v)}  m(v)
\end{equation*}
for a fundamental domain $\D^V$ (and similarly for the equivariant
scalar product on $D^E$).  Note that the definition of the inner
product is independent of the choice of fundamental domain (due to the
equivariance).  The following decomposition result is standard, see,
for example,~\cite{koro-sabu:14} or~\cite{higuchi-shirai:99}.

\begin{proposition}
  \label{prp:floq-th}
  Let $\Wt=(\Gt,\m)$  be a periodic weighted graph with $\Gt=(V,E,\bd)$.  Then
  there is a unitary transformation
  \begin{equation*}
    \Phi\colon\ell_2(V(\Gt))\rightarrow 
    \int_{\widehat \Gamma }^\oplus \ell_2^\chi(V,m) \dd \chi
    \qquadtext{given by}
    \left( \Phi f \right)_\chi(v)
    =\sum_{\gamma\in\Gamma}\overline{\chi({\gamma})} f(\gamma v)
  \end{equation*}
  such that
  \begin{equation*}
    \sigma\bigl(\Delta^{\!\Wt} \bigr)
    = \bigcup_{\chi \in \widehat \Gamma} 
       \sigma \bigl(\eqLapl \bigr).
  \end{equation*}
  where as $\eqLapl :=\Delta^{\!\Wt}
  \restriction_{\ell_2^\chi(V,m)}$
    denotes the \emph{equivariant Laplacian}.
\end{proposition}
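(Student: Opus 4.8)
The plan is to realise $\Phi$ as a partial Fourier transform on the group variable and then to invoke the theory of decomposable operators, with special care devoted to passing from the essential union of the fibre spectra to an honest union. First I would fix a vertex fundamental domain $\D^V$ and use Remark~\ref{rem:coord}~\eqref{coord.a} to write every $v\in V(\Gt)$ uniquely as $v=\xi(v)v_0$ with $v_0\in\D^V$; this produces the unitary identification $\ell_2(V(\Gt))\cong\ell_2(\Gamma)\otimes\ell_2(\D^V,m)$ already recorded above. Since $\Gamma$ is discrete and Abelian, its dual $\widehat\Gamma$ is compact and the Plancherel theorem makes the Fourier transform $F\colon\ell_2(\Gamma)\to L_2(\widehat\Gamma)$ unitary; tensoring with the identity on $\ell_2(\D^V,m)$ gives a unitary $F\otimes\mathrm{id}$ whose explicit form is precisely the stated formula for $\Phi$. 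A short equivariance computation $(\Phi f)_\chi(\gamma'v)=\sum_{\gamma}\overline{\chi(\gamma)}f(\gamma\gamma'v)=\chi(\gamma')(\Phi f)_\chi(v)$, obtained by reindexing the sum and using $\overline{\chi(\gamma'^{-1})}=\chi(\gamma')$, confirms that $(\Phi f)_\chi\in\ell_2^\chi(V,m)$, so $\Phi$ really maps into the claimed direct integral.

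Next I would show that $\Phi$ intertwines $\Delta^{\Wt}$ with the fibrewise family $\eqLapl$. The key structural fact is that, because the $\Gamma$-action is orientation preserving and the weights are $\Gamma$-invariant, the translations $(U_\gamma f)(v)=f(\gamma^{-1}v)$ are unitaries commuting with $\Delta^{\Wt}$; hence $\Delta^{\Wt}$ is decomposed by the joint spectral family of $\{U_\gamma\}_{\gamma\in\Gamma}$, which is exactly what the Floquet transform diagonalises. Concretely, one checks from the local formula for the DML that $\Delta^{\Wt}$ preserves the equivariance relation $g(\gamma v)=\chi(\gamma)g(v)$, because the coefficients of $\Delta^{\Wt}$ are $\Gamma$-periodic; thus $\eqLapl:=\Delta^{\Wt}\restriction_{\ell_2^\chi(V,m)}$ is well defined and $\Phi(\Delta^{\Wt}f)_\chi=\eqLapl(\Phi f)_\chi$. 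This reduces $\Delta^{\Wt}$ to the direct integral $\int^\oplus_{\widehat\Gamma}\eqLapl\dd\chi$.

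For the spectral statement I would exploit that all fibres $\ell_2^\chi(V,m)$ share the same finite dimension $|\D^V|=|V(\G)|=:n$, so after trivialising the bundle the family becomes a single $n\times n$ matrix-valued function $\chi\mapsto\eqLapl$ whose entries depend continuously (indeed smoothly) on $\chi$ through the phases $\chi(\ind_\G(e))$ on the connecting edges. The standard theorem on decomposable operators, together with the fact that the Haar measure on $\widehat\Gamma$ has full support, gives $\sigma(\Delta^{\Wt})=\overline{\bigcup_{\chi\in\widehat\Gamma}\sigma(\eqLapl)}$.

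The hardest and only genuinely delicate point is removing the closure and turning the essential union into the honest union asserted in the statement. Here I would argue that, since $\widehat\Gamma$ is compact and $\chi\mapsto\eqLapl$ is norm-continuous, each ordered eigenvalue band $\chi\mapsto\lambda_k(\eqLapl)$ is a continuous function on a compact space; its image is therefore compact, and so the finite union $\bigcup_{k=1}^{n}\{\lambda_k(\eqLapl)\mid\chi\in\widehat\Gamma\}$ is already closed. Consequently the closure is superfluous and one obtains $\sigma(\Delta^{\Wt})=\bigcup_{\chi\in\widehat\Gamma}\sigma(\eqLapl)$, as claimed.
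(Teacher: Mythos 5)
The paper does not actually prove Proposition~\ref{prp:floq-th}: it is quoted as a standard decomposition result, with the proof deferred to the cited references (Korotyaev--Saburova and Higuchi--Shirai). So there is no in-paper argument to compare against; judged on its own, your proof is correct and is precisely the standard Floquet--Bloch argument those references rely on: the identification $\ell_2(V(\Gt))\cong\ell_2(\Gamma)\otimes\ell_2(\D^V,m)$, the partial Fourier transform $F\otimes\mathrm{id}$ realising $\Phi$ (your equivariance computation is right), the intertwining $\Phi\,\Delta^{\!\Wt}\,\Phi^{*}=\int^\oplus_{\widehat\Gamma}\eqLapl\dd\chi$ coming from $\Gamma$-invariance of the weights and of the incidence structure, and the essential-union description of the spectrum of a decomposable operator. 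You also correctly identified and resolved the only genuinely delicate point, namely upgrading $\sigma(\Delta^{\!\Wt})=\overline{\bigcup_\chi\sigma(\eqLapl)}$ to an honest union: since every fibre is unitarily a fixed $n\times n$ Hermitian matrix depending norm-continuously on $\chi$, the ordered bands $\chi\mapsto\lambda_k(\eqLapl)$ are continuous on the compact group $\widehat\Gamma$, so the union of their images is a finite union of compacts, hence closed; note that continuity of the fibres (plus full support of Haar measure) is also what gives the inclusion $\bigcup_\chi\sigma(\eqLapl)\subset\sigma(\Delta^{\!\Wt})$ in the first place, exactly as you use it.
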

The equivariant Laplacian may also be described in terms of a first order
approach by defining $d^\chi\colon\ell_2^\chi(V,m)\to \ell_2^\chi(E,m)$
just by restriction of $d$ to the subspace $\ell_2^\chi(V,m)$:
\begin{equation*}
  (d^\chi g)_e = g(\bd_+ e) - g(\bd_-e)\;,\quad g\in\ell_2^\chi(V,m)\;.
\end{equation*}
It is straightforward to check
that $d^\chi g \in \ell_2^\chi(E,m)$ if $g \in \ell_2^\chi(V,m)$ and that
$\eqLapl=(d^\chi)^* d^\chi$.

\subsection{Vector potential as a Floquet parameter}

The following result shows that in the case of Abelian groups $\Gamma$
the vector potential can be interpreted as a Floquet parameter of the
periodic graph $\Gt\to\G$ (see Remark~\ref{rem:coord}~(b)).  Consider
the following unitary maps (see also~\cite{kos:89} for manifolds):
\begin{align*}
  U^V &\colon\ell_2(V(\G),m) \to \ell_2^\chi(V,m),&
  \bigl(U^Vf\bigr)( v)
  &= \chi(\xi(v))f([v]),\\
  U^E &\colon\ell_2(E(\G),m) \to \ell_2^\chi(E,m),&
  \bigl(U^E \eta \bigr)_{ e}
  &= \chi(\xi(e)) \left( \eta  \right)_{[e]}.
\end{align*}
It is straightforward to see that $U^V f\in \ell_2^\chi (V,m)$ for all
$f\in \ell_2(V(\G),m)$
 and that
$U^V$ is unitary (similarly for $U^E$).

\begin{definition}
  Let $\Wt=(\Gt,\m)$ be a $\Gamma$-periodic weighted graph with finite
  quotient $\W=(\G,m)$ and $\subG$ be a fundamental domain.  If
  $\alpha$ is a vector potential acting on $\G$, we say that $\alpha$
  has \emph{the lifting property} if there exists
  $\chi\in\widehat{\Gamma}$ such that:
  \begin{equation}
    \e^{i\alpha_{[e]}}=\chi\left( \ind_\subG(e)\right) 
    \quad \text{ for all } e \in E(\Gt).
  \end{equation}
  We denote the set of all the vector potentials with the lifting
  property as $\mathcal{A}_\subG$.

\end{definition}

\begin{proposition}
  \label{prp:floq.mag}
  \indent
  Let $\Wt=(\Gt,\m)$ be a $\Gamma$-periodic weighted graph with finite quotient $\W=(\G,m)$ and $\subG$ be a fundamental domain, then
\begin{equation}\label{floq.mag.a}
 \sigma(\Delta^{\!\Wt})= \bigcup_{\alpha\in \mathcal{A}_\subG} \sigma(\Delta^{\W}_{\alpha})
 \subset [0,2p_\infty] \setminus \MSG^{\W}.
\end{equation}
\end{proposition}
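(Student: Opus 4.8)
The plan is to establish the two assertions separately: the spectral equality $\sigma(\Delta^{\Wt}) = \bigcup_{\alpha\in\mathcal{A}_\subG}\sigma(\Delta^{\W}_{\alpha})$, which carries the Floquet-theoretic content, and then the inclusion into $[0,2\rho_\infty]\setminus\MSG^\W$, which follows almost directly from the definition of the magnetic spectral gap set.

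For the equality I would begin from the Floquet decomposition of Proposition~\ref{prp:floq-th}, namely $\sigma(\Delta^{\Wt}) = \bigcup_{\chi\in\widehat\Gamma}\sigma(\eqLapl)$, where $\eqLapl$ is the equivariant Laplacian on $\ell_2^\chi(V,m)$. The heart of the proof is to show that for each character $\chi$ the equivariant Laplacian $\eqLapl$ is unitarily equivalent to a discrete magnetic Laplacian $\Delta^{\W}_{\alpha}$ on the finite quotient $\W=(\G,m)$, the unitary being the map $U^V\colon\ell_2(V(\G),m)\to\ell_2^\chi(V,m)$, $(U^Vf)(v)=\chi(\xi(v))f([v])$, introduced before the statement, and $\alpha$ being the vector potential associated to $\chi$ through the lifting property $\e^{\im\alpha_{[e]}}=\chi(\ind_\subG(e))$.

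The key computation is to verify $(U^V)^*\,\eqLapl\,U^V=\Delta^{\W}_{\alpha}$ by evaluating $\eqLapl(U^Vf)$ at a representative vertex $v\in\D^V$, for which $\xi(v)=1_\Gamma$ and $[v]=v$. For an edge $e$ incident to $v$ the opposite vertex $v_e$ then has $\Gamma$-coordinate $\xi(v_e)=\ind_\subG(e)$ when $v=\bd_-e$ and $\xi(v_e)=\ind_\subG(e)^{-1}$ when $v=\bd_+e$, because $\xi(\bd_-e)=1_\Gamma$ forces $\ind_\subG(e)=\xi(\bd_+e)$. Substituting the equivariance $g(\gamma w)=\chi(\gamma)g(w)$ together with the lifting property into the defining sum $\rho(v)g(v)-\frac1{m(v)}\sum_{e\in E_v}g(v_e)m_e$ reproduces exactly the oriented phases $\e^{\im\orient\alpha_e(v)}$ of the DML $\Delta^{\W}_{\alpha}$ off the diagonal, while the relative weight $\rho(v)$ survives unchanged on the diagonal (loops contributing $2\cos\alpha_e=2$ since $\ind_\subG(e)=1_\Gamma$ there). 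As $U^V$ is unitary we obtain $\sigma(\eqLapl)=\sigma(\Delta^{\W}_{\alpha})$; and since every $\alpha\in\mathcal{A}_\subG$ arises from some $\chi$ by definition of the lifting property, while every $\chi$ yields such an $\alpha$, the two families $\{\sigma(\eqLapl)\}_{\chi}$ and $\{\sigma(\Delta^{\W}_{\alpha})\}_{\alpha\in\mathcal{A}_\subG}$ coincide, giving the claimed spectral equality. For the inclusion it then suffices to note that $\mathcal{A}_\subG\subset\mathcal{A}(\G)$, so $\bigcup_{\alpha\in\mathcal{A}_\subG}\sigma(\Delta^{\W}_{\alpha})\subset\bigcup_{\alpha\in\mathcal{A}(\G)}\sigma(\Delta^{\W}_{\alpha})$; as each spectrum lies in $[0,2\rho_\infty]$, the right-hand union equals $[0,2\rho_\infty]\setminus\MSG^\W$ by Definition~\ref{def:spec.gaps}.

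The main obstacle is the bookkeeping in the unitary equivalence: one must keep the orientation convention of $\orient\alpha_e(v)$ consistent with the direction of $\ind_\subG(e)$, and this produces $\Delta^{\W}_{\alpha}$ with $\e^{\im\alpha_{[e]}}=\overline{\chi(\ind_\subG(e))}$ rather than $\chi(\ind_\subG(e))$; this is harmless, since a global replacement of $\chi$ by $\bar\chi$ (permitted because $\widehat\Gamma$ is closed under conjugation) absorbs the sign once the union over all characters is taken. One also has to check that the computation is unaffected by loops and multiple edges in the quotient, where $[\bd_+e]=[\bd_-e]$ may occur; the second-order computation on vertices handles these automatically. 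A first-order alternative via $U^E$ and $d^\chi$ with the intertwining $d^\chi U^V=U^Ed_\alpha$ is available, but the symmetric splitting $\e^{\pm\im\alpha_e/2}$ forces a compatible (gauge) choice of edge coordinate, so the direct vertex computation is the cleaner route.
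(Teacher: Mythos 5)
Your route is essentially the paper's: both proofs start from the Floquet decomposition of Proposition~\ref{prp:floq-th}, identify each equivariant Laplacian $\eqLapl$ with a DML $\Delta^{\W}_{\alpha}$ on the quotient through the unitary $U^V$, with $\chi$ and $\alpha$ coupled by the lifting property, observe that every $\alpha\in\mathcal{A}_\subG$ arises from some $\chi$ and conversely, and get the final inclusion from $\mathcal{A}_\subG\subset\mathcal{A}(\G)$ together with Definition~\ref{def:spec.gaps}. The only methodological difference is where the equivalence is verified: the paper checks the first-order intertwining $d^\chi U^V=U^E d_\alpha$ and then factorises $\eqLapl=(d^\chi)^*d^\chi$ and $\Delta^{\W}_\alpha=d_\alpha^*d_\alpha$, while you compute $(U^V)^*\,\eqLapl\,U^V$ directly on vertices. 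That variant is legitimate, and your conjugation caveat points at a real convention issue rather than an obstacle: the paper's displayed formula with $\e^{\im\orient\alpha_e(v)}$ and the factorisation $d_\alpha^*d_\alpha$ in fact differ by $\alpha\mapsto-\alpha$, so depending on which you match against you land on $\e^{\im\alpha_{[e]}}=\chi(\ind_\subG(e))$ or its conjugate; replacing $\chi$ by $\bar\chi$ costs nothing in the union over $\widehat\Gamma$, exactly as you say.

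There is, however, one step that is wrong as stated: the parenthetical claim that loops of $\G$ have $\ind_\subG(e)=1_\Gamma$, so that they contribute $2\cos\alpha_e=2$ on the diagonal. A loop of the quotient arises either from a loop of $\Gt$ (index trivial) or from a connecting edge of $\Gt$ joining a vertex $v$ to a translate $\gamma v$ with $\gamma\neq 1_\Gamma$; in the second case $\ind_\subG(e)=\gamma\neq 1_\Gamma$, and these are precisely the edges that carry the Floquet phases. This is not a marginal case but the central one: for $\Gt=\Z$ with $\Gamma=\Z$ the quotient is a single vertex with a single loop of index $1\in\Z$, and with standard weights $\sigma(\Delta^{\W}_{\alpha})=\{1-\cos\alpha_{e}\}$, whose union over $\mathcal{A}_\subG$ is all of $[0,2]=\sigma(\Delta^{\Gt})$; if the loop phase were pinned to $2\cos\alpha_e=2$, your computation would return $\{0\}$ instead. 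The diagonal matching does go through, but by a different argument: the two orbit representatives $e$ and $\gamma^{-1}e$ in $E_v(\Gt)$ lying over the loop contribute $\chi(\gamma)+\overline{\chi(\gamma)}=2\cos\alpha_{[e]}$ with $\e^{\im\alpha_{[e]}}=\chi(\ind_\subG(e))$ (up to the conjugation you already handle). Replace the parenthetical by this orbit-pairing computation and your proof is complete.
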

\begin{proof}
  By Proposition~\ref{prp:floq-th}, it is enough to show
    \begin{equation*}
     \bigcup_{\chi \in \widehat \Gamma} 
            \sigma \bigl(\eqLapl \bigr)
      = \bigcup_{\alpha\in \mathcal{A}_\subG} \sigma\bigl(\Delta^{\W}_{\alpha}\bigr)
    \end{equation*}  
    ``$\subset$'': Consider a character $\chi \in \widehat{\Gamma}$
    and define a vector potential on $\G$ as follows
    \begin{equation}
      \label{eq:rel.alpha.chi}
      \e^{\im  \alpha_{[e]}}=\chi(\ind_\subG(e))\;,\quad e\in E\;.
    \end{equation}
  Then we have
  \begin{equation*}
    (d^\chi U^Vf)_e
    = (U^Vf)(\bd_+e)-(U^Vf)(\bd_-e)
    = \chi(\xi(\bd_+e))f([\bd_+e]) - \chi(\xi(\bd_-e))f([ \bd_-e]).
  \end{equation*}
  On the other hand, we have
  \begin{equation*}
    (U^E d_\alpha f)_e
    = \chi(\xi(e))
      \Bigl(\e^{\im \alpha_{[e]}/2} f([ \bd_+e])-\e^{-\im \alpha_{[e]}/2} f([ \bd_-e])\Bigr).
  \end{equation*}
  Therefore, the intertwining equation $d^\chi U^V= U^E d_\alpha$ holds if
  \begin{equation*}
    \chi(\xi(\bd_+e))=\chi(\xi(e))\e^{\im \alpha_{[e]}/2}
    \quadtext{and}
    \chi(\xi(\bd_-e))=\chi(\xi(e))\e^{-\im \alpha_{[e]}/2}
  \end{equation*}
  or, equivalently, if 
  \begin{equation*}
    \chi(\xi(\bd_+e))\e^{-\im \alpha_{[e]}/2}
    = \chi(\xi(\bd_-e)) \e^{\im \alpha_{[e]}/2}
  \end{equation*}
  or
  \begin{equation*}
    \e^{\im \alpha_{[e]}} 
    = \chi(\xi(\bd_+e))\chi(\xi(\bd_-e))^{-1}
    = \chi(\ind_\subG(e))\;.
  \end{equation*}
  But this equation is true by definition of the vector potential on $\G$ given in Equation~\eqref{eq:rel.alpha.chi}.
  Finally, since $\eqLapl=(d^\chi)^*d^\chi$ and $\Delta^{\W}_\alpha=d_\alpha^*d_\alpha$ it is clear that these Laplacians
  are unitarily equivalent.

  ``$\supset$'': Let $\alpha\in \mathcal{A}_\subG$ and $E_\subG\subset
  E(\G)$ is such that $\{\ind_\subG(e)|[e]\in E_\subG\}$ is a basis of
  the group $\Gamma$.  Then define
    \begin{equation}
      \label{eq:rel.alpha.chi2}
      \chi(\ind_\subG(e))=\e^{\im  \alpha_{[e]}}\;,\quad e\in E_\subG\;.
    \end{equation}
    so we can extend $\chi$ to all $\Gamma$, so $\chi\in
    \widehat{\Gamma}$. As before, we can show $\sigma \bigl(\eqLapl
    \bigr) = \sigma\left(\Delta^{\W}_{\alpha}\right).$
\end{proof}

\begin{remark}
  \indent
  \begin{enumerate}
    \label{remark:z-periodic}
  \item If $\Gt \to\G$ is a maximal Abelian covering, then we have
    $\sigma(\Delta^{\Gt})= [0,2p_\infty] \setminus \MSG^{\G}$. In
    particular, this is true if $\Gt$ is a tree.
  \item If $\Gamma=\Z$ and if each fundamental domain is connected to
    its neighbours by a single connecting edge, i.e.,
    $|B(\subG,\Gt)|=1$ then we have the following situation: Define
    the vector potential $\alpha^t$ on $\G$ as $\alpha^t=t$ if $[e]\in
    [B(\subG,\Gt)]$ and zero otherwise. Denote
    $\sigma\left(\Delta^{\Gt}_{\alpha^t}\right):=\{ \lambda^t_i \mid
    i=1,\dots,n\}$ be the eigenvalues in ascending order and repeated
    according to their multiplicities, then following results
    in~\cite{ekw:10} we obtain
  \begin{equation*}
    \sigma(\Delta^{\Gt})
    =\bigcup_{i=1}^{|V(\G)|} \bigl[ \min\bigl\lbrace \lambda^0_i,\lambda^\pi_i 
                       \bigr\rbrace , 
           \max\bigl\lbrace \lambda^0_i,\lambda^\pi_i \bigr\rbrace  \bigr].
  \end{equation*} 
 \end{enumerate}   
\end{remark}

Let $\W=(\G,m)$ a weighted graph. We say that $\W$ has the \emph{Full
  spectrum property (FSP)} if $\Delta^\W=[0,2\rho_\infty]$. In fact,
$\W$ has the FSP iff $\SG^\W=\emptyset$. The next conjecture is stated
in~\cite{higuchi-shirai:99}. Let $\Gt$ be a maximal Abelian covering
of $\G$, if $\G$ has no vertex of degree $1$ then $\Gt$ has the
FSP. Moreover, Higuchi and Shirai propose the next problem:
Characterise all finite graphs whose maximal Abelian covering do not
has the FSP.  Here, we partially solve the conjecture and the problem.

The following result verifies Higuchi-Shirai's conjecture
in~\cite{higuchi-shirai:04} for $\Z$-periodic trees.

\begin{theorem}
  \label{theo:FSP}
  Let $\Wt=(\Gt,\m)$ be a $\Z$-periodic tree with standard or
  combinatorial weights and quotient graph $\W=(\G,m)$.  Then
  the following conditions are equivalent:
  \begin{enumerate}
  \item $\Wt$ has the full spectrum property;
  \item $\SG^{\Wt}=\emptyset$;
  \item $\Gt$ is the lattice $\Z$;
  \item $\MSG^{\W}=\emptyset$;
  \item $\G$ is a cycle graph;
  \item $\G$ has no vertex of degree $1$.
  \end{enumerate}
\end{theorem}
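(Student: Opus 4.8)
The plan is to exploit the fact that, under the present hypotheses, a $\Z$-periodic tree is precisely the \emph{maximal Abelian covering} of its quotient, and then to reduce everything to the already established Corollary~\ref{cor:spectral_gap}. First I would record the structural observation that drives the whole argument: since $\Gt$ is a tree it is simply connected, hence it is the universal cover of $\G$ and the deck group $\Gamma=\Z$ equals $\pi_1(\G)$; as the fundamental group of a graph is free, this forces $\Betti(\G)=1$. Because $\Z$ is its own abelianisation, the universal cover and the maximal Abelian cover of $\G$ coincide, so $\Gt\to\G$ is a maximal Abelian covering (this is exactly the case flagged in Remark~\ref{remark:z-periodic}~(1)). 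The equivalence of (a) and (b) is then purely definitional: $\Wt$ has the FSP means $\sigma(\Delta^{\Wt})=[0,2\rho_\infty]$, while $\SG^{\Wt}=[0,2\rho_\infty]\setminus\sigma(\Delta^{\Wt})$, so one is empty exactly when the other holds. Note also that the constant $\rho_\infty$ computed on $\Wt$ agrees with the one on $\W$, since the periodic weight descends to the quotient.

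Next I would establish (b)$\Leftrightarrow$(d). Since $\Gt\to\G$ is a maximal Abelian covering, Remark~\ref{remark:z-periodic}~(1) (which rests on Proposition~\ref{prp:floq.mag}) upgrades the inclusion of Proposition~\ref{prp:floq.mag} to the identity
\begin{equation*}
  \sigma(\Delta^{\Wt})=[0,2\rho_\infty]\setminus\MSG^{\W}.
\end{equation*}
Taking complements inside $[0,2\rho_\infty]$ and using $\MSG^{\W}\subseteq[0,2\rho_\infty]$ yields $\SG^{\Wt}=\MSG^{\W}$, so the two sets are simultaneously empty, which is precisely (b)$\Leftrightarrow$(d).

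For the remaining equivalences I would invoke the Betti-number-one case directly. Corollary~\ref{cor:spectral_gap}, extended to combinatorial weights by Remark~\ref{rem:Gaps}~(1), states that when $\Betti(\G)=1$ the conditions $\MSG^{\W}\neq\emptyset$, ``$\G$ is not a cycle'', and ``$\G$ has a vertex of degree $1$'' are all equivalent; negating them gives exactly (d)$\Leftrightarrow$(e)$\Leftrightarrow$(f). It remains to bring in (c), for which I would argue by degrees. A graph covering is a local isomorphism, so $\deg_{\Gt}(\tilde v)=\deg_{\G}(\pi(\tilde v))$ for every vertex. A finite connected graph all of whose vertices have degree $2$ is a cycle, while a connected infinite tree all of whose vertices have degree $2$ is the lattice $\Z$; conversely each of these two shapes has all degrees equal to $2$. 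Combining these with degree preservation gives the chain ``$\Gt=\Z$'' $\Leftrightarrow$ ``all degrees in $\Gt$ equal $2$'' $\Leftrightarrow$ ``all degrees in $\G$ equal $2$'' $\Leftrightarrow$ ``$\G$ is a cycle'', i.e.\ (c)$\Leftrightarrow$(e). The four blocks (a)$\Leftrightarrow$(b), (b)$\Leftrightarrow$(d), (d)$\Leftrightarrow$(e)$\Leftrightarrow$(f) and (c)$\Leftrightarrow$(e) then close the full cycle of equivalences.

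The main obstacle I anticipate is not analytic but careful bookkeeping at the interface of topology and spectral theory: one must justify cleanly that $\Gt$ is a \emph{maximal} Abelian covering, so that Remark~\ref{remark:z-periodic}~(1) delivers an equality rather than the mere inclusion of Proposition~\ref{prp:floq.mag}, and one must verify that the passage $\SG^{\Wt}=\MSG^{\W}$ uses the same constant $\rho_\infty$ on both sides. A secondary point that deserves a line of care is confirming that the standard and combinatorial weights descend to the quotient and that Corollary~\ref{cor:spectral_gap} is indeed available for both weight families through Remark~\ref{rem:Gaps}~(1); once these are in place, the proof is a short assembly of the prior results.
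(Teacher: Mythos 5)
Your proposal is correct and follows essentially the same route as the paper: identify $\Gt\to\G$ as a maximal Abelian covering, invoke the identity $\sigma(\Delta^{\Wt})=[0,2\rho_\infty]\setminus\MSG^{\W}$ from Proposition~\ref{prp:floq.mag} (Remark~\ref{remark:z-periodic}), and reduce the remaining equivalences to Corollary~\ref{cor:spectral_gap} with Remark~\ref{rem:Gaps} covering combinatorial weights. The paper's proof is far terser --- it asserts the maximal-Abelian-covering equivalence and leaves the degree-preservation argument for (c)$\Leftrightarrow$(e) implicit --- so your write-up simply supplies details the authors omitted.
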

\begin{proof}
  In this situation $\Gt\to\G$ is a maximal Abelian covering if and
  only if the Betti number $\Betti(\G)=1$ if and only if $\Gt$ is a
  $\Z$-periodic tree.  Since $\Gt$ is a tree and $\Gt\to\G$ is a
  maximal Abelian covering, then we have $\sigma(\Delta^{\!\Wt})=
  [0,2p_\infty] \setminus \MSG^{\W}$. The result then follows by
  Corollary~\ref{cor:spectral_gap}.
\end{proof}

\begin{remark}
  Example~\ref{exa:propy} confirms the previous theorem. In addition,
  if $\Betti(\G)\geq 2$ one can easily produce periodic graphs
  based, e.g., on Example~\ref{exa:cycle-edge} that do not having the
  full spectrum property.
\end{remark}

\subsection{Discrete bracketing technique}
We apply now the technique stated in Proposition~\ref{prp:key-obs} to periodic graphs.
  
\begin{theorem}
  \label{theo:main}
  Let $\Wt=(\Gt,\m)$ a $\Gamma$-periodic graph and
  $\pi\colon\Gt\rightarrow \G=\Gt/\Gamma$ with fundamental domain
  $\subG=(\D^V,\D^E,\bd)$.  We let
  \begin{equation*}
    E_0 := [B(\subG,\Gt)]
  \end{equation*}
  be the image of the connectivity edges on the quotient and $V_0$ in
  the neighbourhood of $E_0$. Define by
  \begin{equation*}
    \EG := \G - E_0
    \qquadtext{and}
    \VG := \G - V_0.
  \end{equation*}
  the corresponding edge and vertex virtualised partial graphs,
  respectively. Then
  \begin{equation*}
    \sigma(\Delta^{\!\Wt}) 
    \subset  \bigcup_{k=1}^{|V(\G)|}
    \underbrace{[\lambda_k(\Delta^{\EG}),\lambda_k(\Delta^{\VG})]}_{=:J_k}
  \end{equation*}
  where $\sigma(\Delta^{\EG})$ and $\sigma(\Delta^{\VG})$ are as in
  Theorem~\ref{thm:technique}, i.e., the eigenvalues are written in
  ascending order and repeated according to their multiplicities.
\end{theorem}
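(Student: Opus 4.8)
The plan is to combine the Floquet-theoretic reduction of Proposition~\ref{prp:floq.mag} with the finite-graph bracketing estimate of Theorem~\ref{thm:technique}. First I would invoke Proposition~\ref{prp:floq.mag} to write
\[
  \sigma(\Delta^{\!\Wt}) = \bigcup_{\alpha \in \mathcal{A}_\subG} \sigma(\Delta^{\W}_\alpha),
\]
so that the spectrum of the infinite periodic Laplacian is realised as a union of spectra of discrete magnetic Laplacians on the \emph{finite} quotient $\W = (\G, m)$, ranging over the vector potentials $\alpha$ enjoying the lifting property.

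The crucial observation---and the step most worth isolating---is that every such $\alpha \in \mathcal{A}_\subG$ is supported precisely in $E_0 = [B(\subG,\Gt)]$. Indeed, the lifting property reads $\e^{\im \alpha_{[e]}} = \chi(\ind_\subG(e))$, and by the remark following Definition~\ref{def:index} the edge index satisfies $\ind_\subG(e) = 1_\Gamma$ exactly when $[e] \notin E_0$; for such edges $\chi(\ind_\subG(e)) = \chi(1_\Gamma) = 1$, forcing $\alpha_{[e]} = 0$ in $\Torus$. Hence the support of $\alpha$ lies in $E_0$, uniformly over $\alpha \in \mathcal{A}_\subG$.

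With the support condition secured, I would apply Theorem~\ref{thm:technique} to the finite weighted graph $\W$ with this edge set $E_0$ and the chosen vertex set $V_0$ in the neighbourhood of $E_0$. For each fixed $\alpha \in \mathcal{A}_\subG$ the theorem yields the spectral localising inclusion
\[
  \sigma(\Delta^{\W}_\alpha) \subset J = \bigcup_{k=1}^{|V(\G)|} \bigl[\lambda_k(\Delta^{\EG}), \lambda_k(\Delta^{\VG})\bigr],
\]
where $\EG = \G - E_0$ and $\VG = \G - V_0$ carry the induced weights, and---decisively---the right-hand side does \emph{not} depend on $\alpha$. Taking the union over all $\alpha \in \mathcal{A}_\subG$ and combining with the first displayed identity gives exactly the asserted inclusion.

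I do not anticipate a genuine obstacle here: the argument is essentially a transport of the finite-graph result of Theorem~\ref{thm:technique} through the Floquet isomorphism. The one point requiring care is the support claim of the second paragraph---one must confirm that the quotient-level potentials inherited from characters vanish off the images of the connecting edges---together with the observation that $\EG$ need \emph{not} be a tree here, since Theorem~\ref{thm:technique} only requires $\alpha$ to be supported in $E_0$, a condition we have already verified.
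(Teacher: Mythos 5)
Your proposal is correct and follows essentially the same route as the paper: the Floquet reduction via Proposition~\ref{prp:floq.mag}, followed by the finite-graph bracketing with $E_0=[B(\subG,\Gt)]$ and $V_0$ in its neighbourhood (the paper cites Propositions~\ref{VirtualEdges} and~\ref{DMDL} directly, which is exactly what Theorem~\ref{thm:technique} packages). Your explicit verification that every $\alpha\in\mathcal{A}_\subG$ is supported in $E_0$ --- because the index $\ind_\subG(e)$ is trivial off the translates of the connecting edges, so the lifting property forces $\e^{\im\alpha_{[e]}}=\chi(1_\Gamma)=1$ there --- is a step the paper's proof leaves implicit (it appears only as the proviso ``provided the vector potential is supported on $E_0$''), and you argue it correctly.
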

\begin{proof}
  By Proposition~\ref{prp:floq.mag} we have
  \begin{equation*}
    \sigma(\Delta^{\!\Wt}) = \bigcup_{\alpha\in \mathcal{A}_\subG}
    \sigma(\Delta^{\W}_{\alpha}).
  \end{equation*} 
  Now, by the bracketing technique of Propositions~\ref{VirtualEdges}
  and~\ref{DMDL}, we have for any potential $\alpha\in
  \mathcal{A}_\subG$,
  \begin{equation*}
    \lambda_k(\Delta^{\EG}) 
    \le \lambda_k(\Delta^{\G}_{\alpha}) 
    \le \lambda_k(\Delta^{\VG})
    \quadtext{for all}
    k=1,\dots,|V(\G)|
  \end{equation*}
  provided the vector potential is supported only on the (translates
  of the) connecting edges $E_0 \subset E(\G)$.  Therefore
  \begin{equation*}
    \sigma(\Delta^{\!\Wt}) 
    \subset  \bigcup_{k=1}^{|V(\G)|}
    \underbrace{[\lambda_k(\Delta^{\EG}),\lambda_k(\Delta^{\VG})]}_{=:J_k}.  
    \qedhere
  \end{equation*}
\end{proof}

Note that the bracketing intervals $J_k$ depend on the fundamental
domain $\subG$.  A good choice would be one where the set of
connecting edges is as small as possible.  In this case, we have a
good chance that the bracketing intervals $J_k$ actually do not cover
the full interval $[0,2\rho_\infty]$.  This is geometrically a
``thin--thick'' decomposition, just as in~\cite{lledo-post:08b}, where
a fundamental domain only has a few connections to the complement.

%
%
\section{Examples and applications: Spectral gaps for periodic graphs}
\label{sec:examples}
%

We conclude this article with several applications of the methods
developed before.  Our first example confirms the results by Suzuki
in~\cite{suzuki:13} on periodic graphs with pendants edges using our
simple geometric method (Example~\ref{exa:suzuki}).  The other
examples are more elaborate and include an idealised model of
polypropylene (Example~\ref{exa:propy}) and polyacetylene molecules
(Example~\ref{exa:poly}).  The last example can be understood as an
intermediate covering of graphane and shows that the bracketing
technique developed here extends to more general situation than the
$\Z$-periodic trees, i.e., to higher Betti numbers of the quotient
graphs.

\begin{example}[Suzuki's example]
  \label{exa:suzuki}
  Consider the graph $T_n$ consisting of the $\Z$-lattice and a
  pendant edge at every $n$-th vertex as decoration (see
  Figure~\ref{fig:suzuki.tn}) with standard weights as
  in~\cite{suzuki:13}. It is easy to see that the tree $T_n$ is
  the maximal Abelian covering graph of $\G_n$, where $\G_n$ is just the
  cycle graph $C_n$ decorated with an additional edge attached to some
  vertex of the cycle (for example see Figure~\ref{fig:cycle.deco} for
  $\G_6$) with the standard weights. The graph $T_n$ has spectral
  gaps, i.e., $\SG^{T_n}\ne \emptyset$, as Suzuki proves.  Our
  analysis allows an alternative and short proof (based on the
  criterion in Corollary~\ref{cor:spectral_gap}): As $T_n$ is a tree,
  we can apply Theorem~\ref{theo:FSP}: as $\G_n$ has a vertex of
  degree $1$, the covering graph $T_n$ cannot have the full spectrum
  property.
\end{example}

\begin{figure}[h]
  \centering
  \begin{tikzpicture}[auto,
    vertex/.style={circle,draw=black!100,fill=black!100, thick,
      inner sep=0pt,minimum size=1mm},scale=4]
    
    \node (C) at (-.35,0) [vertex,inner sep=.25pt,minimum size=.25pt,label=above:]{};
    \node (D) at (-.5,0) [vertex,inner sep=.25pt,minimum size=.25pt,label=above:]{};
    \node (E) at (-.65,0) [vertex,inner sep=.25pt,minimum size=.25pt,label=above:]{};
  	                
    \node (Z1) at (-.2,0) [vertex,label=below:$v_{-1}$]{};    
    \node (A) at (0,0) [vertex,label=below:$v_0$]{};
    \node (B) at (.2,0) [vertex,label=below:$v_1$]{};
    \node (C) at (.35,0) [vertex,inner sep=.25pt,minimum size=.25pt,label=above:]{};
    \node (D) at (.5,0) [vertex,inner sep=.25pt,minimum size=.25pt,label=above:]{};
    \node (E) at (.65,0) [vertex,inner sep=.25pt,minimum size=.25pt,label=above:]{};
    \node (F) at (.8,0) [vertex,label=below:$v_{n-1}$]{};               
    \node (G) at (1,0) [vertex,label=below:]{}; 
    \node (A1) at (0,.5) [vertex,label=above:$w_0$]{};
    \node (B1) at (1,.5) [vertex,label=above:$w_1$]{};
    \draw[-latex] (A) to[] node[above] {} (A1);
    \draw[-latex] (Z1) to[]  node[above] {} (A);   
    \draw[-latex] (A) to[]  node[above] {} (B); 
    \draw[-latex] (F) to[]  node[above] {} (G); 
    \draw[-latex] (G) to[]  node[above] {} (B1); 
    \node (A) at (1,0) [vertex,label=below:$v_n$]{};  
    \node (B) at (1.2,0) [vertex,label=below:$v_{n+1}$]{};
    \node (C) at (1.35,0) [vertex,inner sep=.25pt,minimum size=.25pt,label=above:]{};
    \node (D) at (1.5,0) [vertex,inner sep=.25pt,minimum size=.25pt,label=above:]{};
    \node (E) at (1.65,0) [vertex,inner sep=.25pt,minimum size=.25pt,label=above:]{};
    \node (F) at (1.8,0) [vertex,label=below:$v_{2n-1}$]{};               
    \node (G) at (2,0) [vertex,label=below:$v_{2n}$]{};              
    \node (B1) at (2,.5) [vertex,label=above:$w_2$]{};
    \node (Z2) at (2.2,0) [vertex,label=below:$v_{2n+1}$]{}; 
    \draw[-latex] (G) to[]  node[above] {} (Z2);   
    \draw[-latex] (A) to[]  node[above] {} (B); 
    \draw[-latex] (F) to[]  node[above] {} (G); 
    \draw[-latex] (G) to[]  node[above] {} (B1);  	 
    \node (C) at (2.35,0) [vertex,inner sep=.25pt,minimum size=.25pt,label=above:]{};
    \node (D) at (2.5,0) [vertex,inner sep=.25pt,minimum size=.25pt,label=above:]{};
    \node (E) at (2.65,0) [vertex,inner sep=.25pt,minimum size=.25pt,label=above:]{};
  \end{tikzpicture}
  \caption{The infinite tree graph $T_n$ with a pendant vertex every
    $n$ vertices along the $\Z$-lattice, see~\cite{suzuki:13}.}
\label{fig:suzuki.tn}
\end{figure}
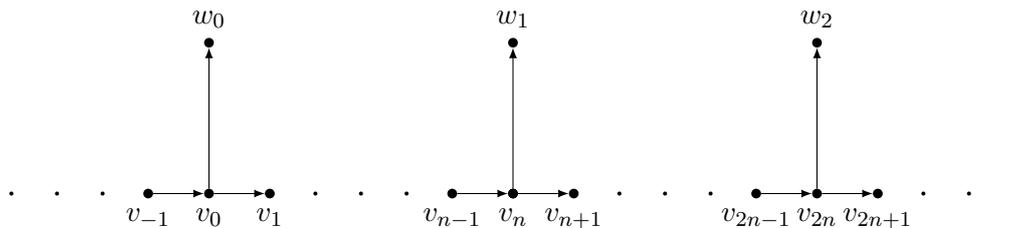

\begin{example}[Polypropylene]
  \label{exa:propy}
  Consider the graph associated to a thermoplastic polymer, the
  \emph{polypropylene}.  This structure consists of a sequence of
  carbon atoms (white vertices) with hydrogen (black vertices) and the
  methyl group $\mathrm{CH}_3$.  We choose the infinite covering graph
  $\Gt$ as an idealised model of polypropylene (see
  Figure~\ref{subfig:PP}); this graph is a covering graph of the
  bipartite graph denoted as $\G$ (see Figure~\ref{subfig:PP_0}).
  Again, by Corollary~\ref{cor:spectral_gap} we get that the set of
  magnetic spectral gaps is not empty $\MSG^{\G}\ne 0$ and by
  Proposition~\ref{prp:floq.mag} we conclude that the Laplacian on
  $\Gt$ has spectral gaps. We show how to apply the technique
  developed in this article twice:
  
  First, let $E_0=\{e_1\}$. Then $V_0=\{v_1\}$ is in the neighbourhood
  of $E_0$ (see Definition~\ref{def:admissible}). Using the notation
  in Theorem~\ref{thm:technique} and Proposition~\ref{prp:floq.mag} we
  get $\sigma(\Delta^{\Gt})\subset J$, where $J$ is a subset of
  $[0,2]$ (see Figure~\ref{subfig:spectrum}).  Since $\G$ is bipartite
  we obtain $\sigma(\Delta^{\Gt})\subset \kappa(J)$ by
  Proposition~\ref{prp:bipartite.sym}.  Therefore, the symmetry gives
  tighter localisation of the spectrum $\sigma(\Delta^{\Gt})\subset
  J\cap \kappa(J)$.
  
  But the set $V_0'=\{v_2\}$ is also in the neighbourhood of the
  previous set $E_0$. Then apply the same argument as before for $E_0$
  and $V_0'$ to obtain $J', \kappa(J')$. Again we obtain
  $\sigma(\Delta^{\Gt})\subset J'\cap\kappa(J')$.  We conclude that
  $\sigma(\Delta^{\Gt})\subset J\cap\kappa(J)\cap
  J'\cap\kappa(J')$. In fact, in Figure~\ref{subfig:spectrum} we see
  that our technique gives a very good estimation for the spectrum.
\end{example}

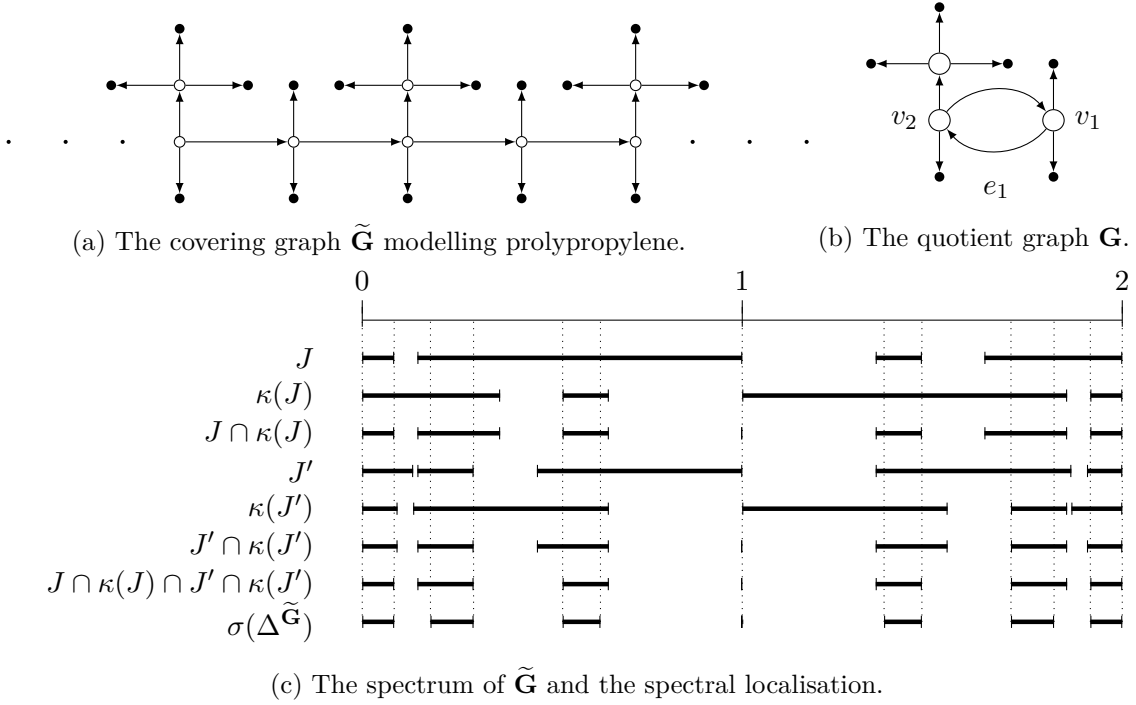
\begin{figure}[h]
   	\centering
\subcaptionbox{The covering graph $\Gt$ modelling prolypropylene. \label{subfig:PP}}%
  [.6\linewidth]{\begin{tikzpicture}[auto, vertex/.style={circle,draw=black!100,fill=black!100, thick,
       	                    inner sep=0pt,minimum size=1mm},scale=3]
       	        \node (D) at (-1.25,0) [vertex,inner sep=.25pt,minimum size=.25pt,label=above:]{};
       	        \node (D) at (-1,0) [vertex,inner sep=.25pt,minimum size=.25pt,label=above:]{};
       	        \node (D) at (-.75,0) [vertex,inner sep=.25pt,minimum size=.25pt,label=above:]{};
       	        \node (O) at (-.5,0) [circle, minimum width=4pt, draw, inner sep=0pt] {};                
       	        \node (A) at (0,0) [circle, minimum width=4pt, draw, inner sep=0pt] {};
       	        \node (B) at (.5,0) [circle, minimum width=4pt, draw, inner sep=0pt]{};
       	        \node (C) at (1,0) [circle, minimum width=4pt, draw, inner sep=0pt]{};
       	        \node (D) at (1.5,0) [circle, minimum width=4pt, draw, inner sep=0pt]{};
       	        \draw[-latex] (O) to[] node[above] {} (A);
       	        \draw[-latex] (A) to[] node[above] {} (B);
       	    	\draw[-latex] (B) to[]  node[above] {} (C);
       	    	\draw[-latex] (C) to[]  node[above] {} (D);
       	        \node (O1) at (-.5,.25) [circle, minimum width=4pt, draw, inner sep=0pt] {};                
       	        \node (A1) at (0,.25) [vertex,label=left:] {};
       	        \node (B1) at (.5,.25) [circle, minimum width=4pt, draw, inner sep=0pt]{};
       	        \node (C1) at (1,.25) [vertex,label=left:]{};
       	        \node (D1) at (1.5,.25) [circle, minimum width=4pt, draw, inner sep=0pt]{}; 	        
       	        \draw[-latex] (O) to[] node[above] {} (O1);
       	        \draw[-latex] (A) to[] node[above] {} (A1);
       	    	\draw[-latex] (B) to[]  node[above] {} (B1);
       	    	\draw[-latex] (C) to[]  node[above] {} (C1);
       	    	\draw[-latex] (D) to[]  node[above] {} (D1);
       	        \node (O2) at (-.5,-.25) [vertex,label=left:] {};                
       	        \node (A2) at (0,-.25) [vertex,label=left:] {};
       	        \node (B2) at (.5,-.25) [vertex,label=left:]{};
       	        \node (C2) at (1,-.25) [vertex,label=left:]{};
       	        \node (D2) at (1.5,-.25) [vertex,label=left:]{}; 	        
       	        \draw[-latex] (O) to[] node[above] {} (O2);
       	        \draw[-latex] (A) to[] node[above] {} (A2);
       	    	\draw[-latex] (B) to[]  node[above] {} (B2);
       	    	\draw[-latex] (C) to[]  node[above] {} (C2);
       	    	\draw[-latex] (D) to[]  node[above] {} (D2); 	    	
       	        \node (O) at (-.8,.25) [vertex,label=left:] {}; 
       	        \node (A) at (-.5,.5) [vertex,label=left:] {};                 
       	        \node (B) at (.2,.25) [vertex,label=left:]{};
       	        \node (C) at (.5,.5) [vertex,label=left:] {}; 
       	        \node (D) at (1.2,.25) [vertex,label=left:]{};
       	        \node (E) at (1.5,.5) [vertex,label=left:] {}; 	        
       	        \draw[-latex] (O1) to[] node[above] {} (O);
       	        \draw[-latex] (O1) to[] node[above] {} (A);
       	    	\draw[-latex] (B1) to[]  node[above] {} (B);
       	    	\draw[-latex] (B1) to[] node[above] {} (C);
       	    	\draw[-latex] (D1) to[]  node[above] {} (D);
       	    	\draw[-latex] (D1) to[]  node[above] {} (E);
      			\node (O) at (-.2,.25) [vertex,label=left:] {};                
       	        \node (B) at (.8,.25) [vertex,label=left:]{};
       	        \node (D) at (1.8,.25) [vertex,label=left:]{}; 	        
       	        \draw[-latex] (O1) to[] node[above] {} (O);
       	    	\draw[-latex] (B1) to[]  node[above] {} (B);
       	    	\draw[-latex] (D1) to[]  node[above] {} (D);  	    		
       	    \node (D) at (1.75,0) [vertex,inner sep=.25pt,minimum size=.25pt,label=above:]{};
       	    \node (D) at (2,0) [vertex,inner sep=.25pt,minimum size=.25pt,label=above:]{};
       	    \node (D) at (2.25,0) [vertex,inner sep=.25pt,minimum size=.25pt,label=above:]{};
       	    \end{tikzpicture}}
\subcaptionbox{The quotient graph $\G$.\label{subfig:PP_0}}
  [.3\linewidth]{      	    \begin{tikzpicture}[auto, vertex/.style={circle,draw=black!100,fill=black!100, thick,
    	                    inner sep=0pt,minimum size=1mm},scale=3]
    	        \node (O) at (-.5,0) [circle, minimum width=8pt, draw, inner sep=0pt, label=left:{{$v_2$}}]{};                
    	        \node (A) at (0,0) [circle, minimum width=8pt, draw, inner sep=0pt,label=right:{{$v_1$}}]{};
    	        \draw[-latex] (A) to[bend left=50] node[label=below:{$e_1$}] {} (O); 
    	        \draw[-latex] (O) to[bend left=50] node[below] {} (A); 
    	        \node (O1) at (-.5,.25) [circle, minimum width=8pt, draw, inner sep=0pt] {};                
    	        \node (A1) at (0,.25) [vertex,label=left:] {};	        
    	        \draw[-latex] (O) to[] node[above] {} (O1);
    	        \draw[-latex] (A) to[] node[above] {} (A1);
    	        \node (O2) at (-.5,-.25) [vertex,label=left:] {};                
    	        \node (A2) at (0,-.25) [vertex,label=left:] {};	        
    	        \draw[-latex] (O) to[] node[above] {} (O2);
    	        \draw[-latex] (A) to[] node[above] {} (A2);	    	
    	        \node (O) at (-.8,.25) [vertex,label=left:] {}; 
    	        \node (A) at (-.5,.5) [vertex,label=left:] {};                 	        
    	        \draw[-latex] (O1) to[] node[above] {} (O);
    	        \draw[-latex] (O1) to[] node[above] {} (A);
   			\node (O) at (-.2,.25) [vertex,label=left:] {};                 	        
    	        \draw[-latex] (O1) to[] node[above] {} (O); 	   		
    	    \end{tikzpicture}}\\
\centering
  \subcaptionbox{The spectrum of $\Gt$ and the spectral localisation.
    \label{subfig:spectrum}}
  [\linewidth]{	\centering	\begin{tikzpicture}[scale=5]
              	\draw[-] (0,.5) -- (2,0.5) ; 
              	\foreach \x in  {0,1,2} 
              	\draw[shift={(\x,0.5)},color=black] (0pt,1pt) -- (0pt,-.5pt);
              	\foreach \x in {0,1,2} 
              	\draw[shift={(\x,0.5)},color=black] (0pt,0.5pt) -- (0pt,1.5pt) node[above] 
              	{$\x$};
    
    			\draw[] (-.1,.4)node[left] {$J$};
              	\draw[|-|]  (0,.4) -- (.0840,.4);
              	\draw[line width=.6mm]   (0,.4) -- (.0840,.4);
              	\draw[|-|]  (0.145069,.4) -- (1,.4);
              	\draw[line width=.6mm]   (0.145069,.4) -- (1,.4);
              	\draw[|-|]  (1.3511,.4) -- (1.47273,.4);
              	\draw[line width=.6mm]   (1.3511,.4) -- (1.47273,.4);          	
              	\draw[|-|]  (1.63714,.4) -- (2,.4);
              	\draw[line width=.6mm]   (1.63714,.4) -- (2,.4);  
              	
    			\draw[] (-.1,.3)node[left] {$\kappa(J)$};
              	\draw[|-|]  (2-0,.3) -- (2-.0840,.3);
              	\draw[line width=.6mm]   (2-0,.3) -- (2-.0840,.3);
              	\draw[|-|]  (2-0.145069,.3) -- (2-1,.3);
              	\draw[line width=.6mm]   (2-0.145069,.3) -- (2-1,.3);
              	\draw[|-|]  (2-1.3511,.3) -- (2-1.47273,.3);
              	\draw[line width=.6mm]   (2-1.3511,.3) -- (2-1.47273,.3);  
              	\draw[|-|]  (2-1.63714,.3) -- (2-2,.3);
              	\draw[line width=.6mm]   (2-1.63714,.3) -- (2-2,.3);            	
    
    			\draw[] (-.1,.2)node[left] {$J\cap\kappa(J)$};          	
              	\draw[|-|]  (0,.2) -- (.0840,.2);
              	\draw[line width=.6mm]   (0,.2) -- (.0840,.2);
              	\draw[|-|]  (0.145069,.2) -- (0.36286,.2);
              	\draw[line width=.6mm]   (0.145069,.2) -- (0.36286,.2);
              	\draw[|-|]  (0.52727,.2) -- (0.6489,.2);
              	\draw[line width=.6mm]   (0.52727,.2) -- (0.6489,.2);  
              	\draw[|-|]  (1.3511,.2) -- (1.47273,.2);
              	\draw[line width=.6mm]   (1.3511,.2) -- (1.47273,.2); 
              	\draw[|-|]  (1.63714,.2) -- (1.85493,.2);
              	\draw[line width=.6mm]   (1.63714,.2) -- (1.85493,.2); 
              	\draw[|-|]  (1.916,.2) -- (2,.2);          	         	         	
              	\draw[line width=.6mm]   (1.916,.2) -- (2,.2);
              	\draw[|-|] (.9999,.2) -- (1.00001,.2);          	          	    	
    
    			\draw[] (-.1,.1)node[left] {$J'$};           	
              	\draw[|-|]  (0,.1) -- (0.133975,.1);
              	\draw[line width=.6mm]   (0,.1) -- (0.133975,.1);
              	\draw[|-|]  (0.145069,.1) -- (0.292893,.1);
              	\draw[line width=.6mm]   (0.145069,.1) -- (0.292893,.1);
              	\draw[|-|]  (0.459592,.1) -- (1,.1);
              	\draw[line width=.6mm]   (0.459592,.1) -- (1,.1);          	
              	\draw[|-|]  (1.3511,.1) -- (1.86603,.1);
              	\draw[line width=.6mm]   (1.3511,.1) -- (1.86603,.1);          	
              	\draw[|-|]  (1.9071,.1) -- (2,.1);
              	\draw[line width=.6mm]   (1.9071,.1) -- (2,.1);           	
    
    			\draw[] (-.1,0)node[left] {$\kappa(J')$};          	
              	\draw[|-|]  (2-0,0) -- (2-0.133975,0);
              	\draw[line width=.6mm]   (2-0,0) -- (2-0.133975,0);
              	\draw[|-|]  (2-0.145069,0) -- (2-0.292893,0);
              	\draw[line width=.6mm]   (2-0.145069,0) -- (2-0.292893,0);
              	\draw[|-|]  (2-0.459592,0) -- (2-1,0);
              	\draw[line width=.6mm]   (2-0.459592,0) -- (2-1,0);          	
              	\draw[|-|]  (2-1.3511,0) -- (2-1.86603,0);
              	\draw[line width=.6mm]   (2-1.3511,0) -- (2-1.86603,0);          	
              	\draw[|-|]  (2-1.9071,0) -- (2-2,0);
              	\draw[line width=.6mm]   (2-1.9071,0) -- (2-2,0);    
    
    			\draw[] (-.1,-.1)node[left] {$J'\cap\kappa(J')$};     
              	\draw[|-|]  (0,-.1) -- (2-1.9071,-.1);
              	\draw[line width=.6mm]   (0,-.1) -- (2-1.9071,-.1);
              	\draw[|-|]  (0.145069,-.1) -- (0.292893,-.1);
              	\draw[line width=.6mm]   (0.145069,-.1) -- (0.292893,-.1);    
              	\draw[|-|]  (0.459592,-.1) -- (0.648902,-.1);
              	\draw[line width=.6mm]   (0.459592,-.1) -- (0.648902,-.1);     	
              	\draw[|-|]  (2-0,-.1) -- (1.9071,-.1);
              	\draw[line width=.6mm]   (2-0,-.1) -- (1.9071,-.1);
              	\draw[|-|]  (2-0.145069,-.1) -- (2-0.292893,-.1);
              	\draw[line width=.6mm]   (2-0.145069,-.1) -- (2-0.292893,-.1);    
              	\draw[|-|]  (2-0.459592,-.1) -- (2-0.648902,-.1);
              	\draw[line width=.6mm]   (2-0.459592,-.1) -- (2-0.648902,-.1);  
              	\draw[|-|] (.9999,-.1) -- (1.00001,-.1);
              	
    			\draw[] (-.1,-.2)node[left] {$J\cap\kappa(J)\cap J'\cap\kappa(J')$};          	
              	\draw[|-|]  (0,-.2) -- (.0840,-.2);
              	\draw[line width=.6mm]   (0,-.2) -- (.0840,-.2);          	
              	\draw[|-|]  (0.145069,-.2) -- (0.292893,-.2);
              	\draw[line width=.6mm]   (0.145069,-.2) -- (0.292893,-.2); 
              	\draw[|-|]  (0.52727,-.2) -- (0.648902,-.2);
              	\draw[line width=.6mm]   (0.52727,-.2) -- (0.648902,-.2);
              	\draw[|-|]  (2-0,-.2) -- (2-.0840,-.2);
              	\draw[line width=.6mm]   (2-0,-.2) -- (2-.0840,-.2);          	
              	\draw[|-|]  (2-0.145069,-.2) -- (2-0.292893,-.2);
              	\draw[line width=.6mm]   (2-0.145069,-.2) -- (2-0.292893,-.2); 
              	\draw[|-|]  (2-0.52727,-.2) -- (2-0.648902,-.2);
              	\draw[line width=.6mm]   (2-0.52727,-.2) -- (2-0.648902,-.2);    
        		\draw[|-|] (.9999,-.2) -- (1.00001,-.2);               	            	          	
              	          	
    			\draw[] (-.1,-.3)node[left] {$\sigma(\Delta^{\Gt})$};          	          
              	\draw[|-|]  (0,-.3) -- (.0840,-.3);
              	\draw[line width=.6mm]   (0,-.3) -- (.0840,-.3);
              	\draw[|-|] (0.179482,-.3) -- (0.292893,-.3);
              	\draw[line width=.6mm]  (0.179482,-.3) -- (0.292893,-.3);   
              	\draw[|-|] (0.52727,-.3) -- (0.626838,-.3);
              	\draw[line width=.6mm]  (0.52727,-.3) -- (0.626838,-.3); 
        		\draw[|-|] (.9999,-.3) -- (1.00001,-.3);
        		\draw[line width=.6mm](1,-.3) -- (1,-.3);
        		\draw[|-|] (1.37316,-.3) -- ( 1.47273,-.3);
        		\draw[line width=.6mm](1.37316,-.3) -- ( 1.47273,-.3);
        		\draw[|-|] (1.70711,-.3) -- (1.82052,-.3);
        		\draw[line width=.6mm](1.70711,-.3) -- (1.82052,-.3);
        		\draw[|-|] (1.91598,-.3) -- (2,-.3);
        		\draw[line width=.6mm](1.91598,-.3) -- (2,-.3); 
        		\draw[|-|] (.9999,-.3) -- (1.00001,-.3);  
        		
         		\draw[dotted]  (0,-.3) -- (0,.5);         		   
              	\draw[dotted]  (.0840,-.3) -- (.0840,.5);
              	\draw[dotted]  (0.179482,-.3) -- (0.179482,.5);
              	\draw[dotted]  (0.292893,-.3) -- (0.292893,.5);
              	\draw[dotted]  (0.52727,-.3) -- (0.52727,.5);
              	\draw[dotted]  (0.626838,-.3) -- (0.626838,.5);
              	\draw[dotted]  (1,-.3) -- (1,.5);
              	\draw[dotted]  (1.37316,-.3) -- (1.37316,.5);
              	\draw[dotted]  ( 1.47273,-.3) -- ( 1.47273,.5);
              	\draw[dotted]  (1.70711,-.3) -- (1.70711,.5);
              	\draw[dotted]  (1.82052,-.3) -- (1.82052,.5);
              	\draw[dotted]  (1.91598,-.3) -- (1.91598,.5);
              	\draw[dotted]  (2,-.3) -- (2,.5);
              	      	     	           	    	         	      			
              	\end{tikzpicture}} 
              \caption{Spectral gaps of \emph{polypropylene}. $J$ is the
                spectral localisation of the pair $\G - \{e_1\}$ and $\G
                -\{v_1\}$, and bipartiteness gives $J \cap \kappa(J)$
                as localisation set.  Similarly, $J'$ is the spectral
                localisation of the pair $\G - \{e_1\}$ and $\G -\{v_2\}$.
                Putting all this information together, we get the
                rather good spectral localisation $J \cap \kappa(J)
                \cap J' \cap \kappa(J')$.}
\label{fig:propylene}
\end{figure}


\begin{example}[Polyacetylene]
  \label{exa:poly}
  The previous examples show the existence of spectral gaps in
  periodic trees covering finite graphs with Betti number $1$. In this
  example we show how to treat more complex periodic graphs.  Consider
  the \emph{polyacetylene}, that consists of a chain of carbon atoms
  (white circles) with alternating single and double bonds between
  them, each with one hydrogen atoms (black vertex). We denote this
  graph as $\Gt$ and note that it is not a tree (cf.,
  Figure~\ref{subfig:PA}). If we want to compute $\sigma(\Delta^{\Gt})$
  we use Proposition~\ref{prp:floq.mag}. The graph $\Gt$ is
  covering the graph $\G$ (see Figure~\ref{subfig:PA_0}) which is
  bipartite and has Betti number $2$. In this case,
  \begin{equation*}
    \sigma(\Delta^{\Gt})
         = \bigcup_{t\in[0,2\pi]} \sigma(\Delta^{\G}_{\alpha^t})
  \end{equation*}
  where ${\alpha^t}$ is supported only in $e_1$ with
  $\alpha^t_{e_1}=t$ (see Remark~\ref{remark:z-periodic}).

  As in the previous examples, we assume that the vector potential
  $\alpha$ is supported only on one edge, say $e_1$.  Define
  $E_0=\{e_1\}$ and $V_0=\{v_1\}$, then $V_0$ is in the neighbourhood
  of $E_0$.  We proceed as in the previous example to localise the
  spectrum within $J\cap\kappa(J)$ (see
  Figure~\ref{fig:acetylene}). In fact, our method works almost
  perfectly in this case, since we detect precisely the spectrum
  \begin{equation*}
    J\cap\kappa(J)\setminus{\{1\}}=\sigma(\Delta^{\Gt}).
  \end{equation*}
  
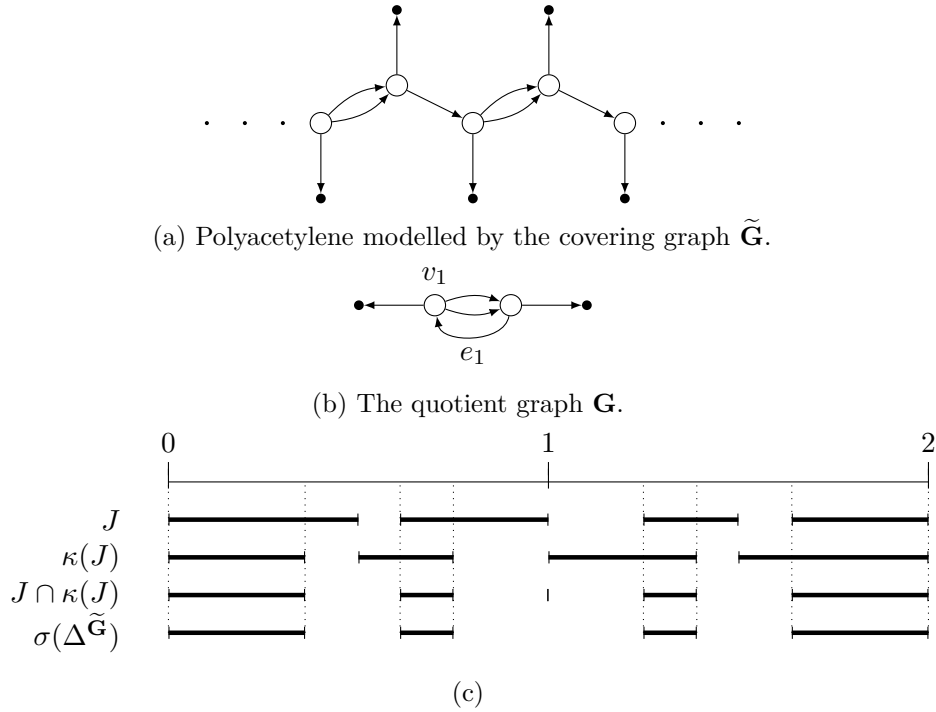
\begin{figure}[h]
  \centering 
  \subcaptionbox{Polyacetylene modelled by the covering graph $\Gt$.
    \label{subfig:PA}}%
  [.6\linewidth]{ 
    \begin{tikzpicture}[auto,
      vertex/.style={circle,draw=black!100,fill=black!100, thick,
        inner sep=0pt,minimum size=1mm},scale=2]
      \node (D) at (-1.25,0) [vertex,inner sep=.25pt,minimum
      size=.25pt,label=above:]{}; %
      \node (D) at (-1,0) [vertex,inner sep=.25pt,minimum
      size=.25pt,label=above:]{}; %
      \node (D) at (-.75,0) [vertex,inner sep=.25pt,minimum
      size=.25pt,label=above:]{}; %
      \node (O) at (-.5,0) [circle, minimum width=8pt, draw, inner
      sep=0pt] {}; %
      \node (A) at (0,.25) [circle, minimum width=8pt, draw, inner
      sep=0pt] {}; %
      \node (B) at (.5,0) [circle, minimum width=8pt, draw, inner
      sep=0pt]{}; %
      \node (C) at (1,.25) [circle, minimum width=8pt, draw, inner
      sep=0pt]{}; %
      \node (D) at (1.5,0) [circle, minimum width=8pt, draw, inner
      sep=0pt]{}; %
      \draw[-latex] (O) to[bend left=20] node[above] {} (A); %
      \draw[-latex] (O) to[bend left=-20] node[above] {} (A); %
      \draw[-latex] (A) to[] node[above] {} (B); %
      \draw[-latex] (B) to[bend left=20] node[above] {} (C); %
      \draw[-latex] (B) to[bend left=-20] node[above] {} (C); %
      \draw[-latex] (C) to[] node[above] {} (D); %
      \node (O1) at (-.5,-.5) [vertex,label=above:]{}; %
      \node (A1) at (0,.75) [vertex,label=above:]{}; %
      \node (B1) at (.5,-0.5) [vertex,label=above:]{}; %
      \node (C1) at (1,.75) [vertex,label=above:]{}; %
      \node (D1) at (1.5,-.5) [vertex,label=above:]{}; %
      \draw[-latex] (O) to[] node[above] {} (O1); %
      \draw[-latex] (A) to[] node[above] {} (A1); %
      \draw[-latex] (B) to[] node[above] {} (B1); %
      \draw[-latex] (C) to[] node[above] {} (C1); %
      \draw[-latex] (D) to[] node[above] {} (D1); %
      \node (D) at (1.75,0) [vertex,inner sep=.25pt,minimum
      size=.25pt,label=above:]{}; %
      \node (D) at (2,0) [vertex,inner sep=.25pt,minimum
      size=.25pt,label=above:]{}; %
      \node (D) at (2.25,0) [vertex,inner sep=.25pt,minimum
      size=.25pt,label=above:]{}; %
    \end{tikzpicture}}
  
  \subcaptionbox{The quotient graph $\G$.\label{subfig:PA_0}}
  [.3\linewidth]{
    \begin{tikzpicture}[auto,
      vertex/.style={circle,draw=black!100,fill=black!100, thick,
        inner sep=0pt,minimum size=1mm},scale=2]
      \node (O) at (-.5,0) [circle, minimum width=8pt, draw, inner
      sep=0pt, label=above:$v_1$] {}; %
      \node (A) at (0,0) [circle, minimum width=8pt, draw, inner
      sep=0pt] {}; %
      \draw[-latex] (O) to[bend left=20] node[above] {} (A); %
      \draw[-latex] (O) to[bend left=-20] node[above] {} (A); %
      \draw[-latex] (A) to[bend left=80] node[below] {$e_1$} (O); %
      \node (O1) at (-1,0) [vertex,label=above:]{}; %
      \node (A1) at (.5,0) [vertex,label=above:]{}; %
      \draw[-latex] (O) to[bend left=0] node[above] {} (O1); %
      \draw[-latex] (A) to[bend left=-0] node[above] {} (A1); %
    \end{tikzpicture}}\\
  \centering
  \subcaptionbox{\label{subfig:spectra}}
  [\linewidth]{	
    \begin{tikzpicture}[scale=5]
      \draw[-] (0,.5) -- (2,0.5) ; 
      \foreach \x in {0,1,2} 
      \draw[shift={(\x,0.5)},color=black] (0pt,1pt) -- (0pt,-.5pt); %
      \foreach \x in {0,1,2} 
      \draw[shift={(\x,0.5)},color=black] (0pt,0.5pt) -- (0pt,1.5pt)
      node[above] {$\x$}; %
        
      \draw[] (-.1,.4)node[left] {$J$}; %
      \draw[|-|] (0,.4) -- (0.5,.4); %
      \draw[line width=.6mm] (0,.4) -- (0.5,.4); %
      \draw[|-|] (0.609612,.4) -- (1,.4); %
      \draw[line width=.6mm] (0.609612,.4) -- (1,.4); %
      \draw[|-|] (1.25,.4) -- (1.5,.4); %
      \draw[line width=.6mm] (1.25,.4) -- (1.5,.4); %
      \draw[|-|] (1.64039,.4) -- (2,.4); %
      \draw[line width=.6mm] (1.64039,.4) -- (2,.4); %
                  	
      \draw[] (-.1,.3)node[left] {$\kappa(J)$}; %
      \draw[|-|] (0,.3) -- (0.359612,.3); %
      \draw[line width=.6mm] (0,.3) -- (0.359612,.3); %
      \draw[line width=.6mm] (0.5,.3) -- (0.75,.3); %
      \draw[|-|] (0.5,.3) -- (0.75,.3); %
      \draw[line width=.6mm] (1,.3) -- (1.39039,.3); %
      \draw[|-|] (1,.3) -- (1.39039,.3); %
      \draw[line width=.6mm] (1.5,.3) -- (2,.3); %
      \draw[|-|] (1.5,.3) -- (2,.3); %
        
      \draw[] (-.1,.2)node[left] {$J\cap\kappa(J)$}; %
      \draw[|-|] (0,.2) -- (0.359612,.2); %
      \draw[line width=.6mm] (0,.2) -- (0.359612,.2); %
      \draw[|-|] (0.75,.2) -- (0.609612,.2); %
      \draw[line width=.6mm] (0.75,.2) -- (0.609612,.2); %
      \draw[|-|] (.9999,.2) -- (1.00001,.2); %
      \draw[|-|] (2-0,.2) -- (2-0.359612,.2); %
      \draw[line width=.6mm] (2-0,.2) -- (2-0.359612,.2); %
      \draw[|-|] (2-0.75,.2) -- (2-0.609612,.2); %
      \draw[line width=.6mm] (2-0.75,.2) -- (2-0.609612,.2); %

      \draw[] (-.1,.1)node[left] {$\sigma(\Delta^{\Gt})$}; %
      \draw[|-|] (0,.1) -- (0.359612,.1); %
      \draw[line width=.6mm] (0,.1) -- (0.359612,.1); %
      \draw[|-|] (0.609612,.1) -- (0.75,.1); %
      \draw[line width=.6mm] (0.609612,.1) -- (0.75,.1); %
      \draw[|-|] (1.25,.1) -- (1.39039,.1); %
      \draw[line width=.6mm] (1.25,.1) -- (1.39039,.1); %
      \draw[|-|] (1.64039,.1) -- (2,.1); %
      \draw[line width=.6mm] (1.64039,.1) -- (2,.1); %
         	
      \draw[dotted] (0,.1) -- (0,.5); %
      \draw[dotted] (0.359612,.1) -- (0.359612,.5); %
      \draw[dotted] (0.609612,.1) -- (0.609612,.5); %
      \draw[dotted] (0.75,.1) -- (0.75,.5); %
      \draw[dotted] (1.25,.1) -- (1.25,.5); %
      \draw[dotted] (1.64039,.1) -- (1.64039,.5); %
      \draw[dotted] (2,.1) -- (2,.5); %
      \draw[dotted] (1.39039,.1) -- (1.39039,.5); %
\end{tikzpicture}}                  	      	     	           	
\caption{Spectral gaps of \emph{polyacetylene}. This is an example
  where the quotient graph has Betti number larger than $1$.  Here,
  $J$ is the spectral localisation of the pair $\G - \{e_1\}$ and $\G
  -\{v_1\}$, and bipartiteness gives again $J \cap \kappa(J)$ as
  localisation set.  The spectral localisation $J$ gives almost
  exactly the actual spectrum of $\Gt$, except for the spectral value
  $1$.}
\label{fig:acetylene}
\end{figure}

The graph $\Gt$ of Example~\ref{exa:poly} modelling polyacetylene
(see Figure~\ref{fig:acetylene}) corresponds to an intermediate
covering with respect to the maximal Abelian covering which is
graphane (see Figure~\ref{fig:Graphane}).  However, we cannot use
Theorem~\ref{theo:FSP}, but we can apply the bracketing technique to
detect the spectral magnetic gaps in $\G$ and hence spectral gaps in
graphane. Just take $E_0=\{e_1, e_2\}$ and $V_0=\{v_1\}$ being in the
neighbourhood of $E_0$, we define the bracketing intervals $J$ and
$\kappa(J)$ as before (see Figure~\ref{fig:Graphane}). Again, our
method works almost perfectly since
$J\cap\kappa(J)\setminus{\{1\}}=\sigma(\Delta^{\Gt})$.

\begin{figure}[h]
  \centering \subcaptionbox{Graphane modelled by the covering graph
    $\Gt$.\label{subfig:Graphane}}%
  [.6\linewidth]{{
      \includegraphics[width=0.3\linewidth]{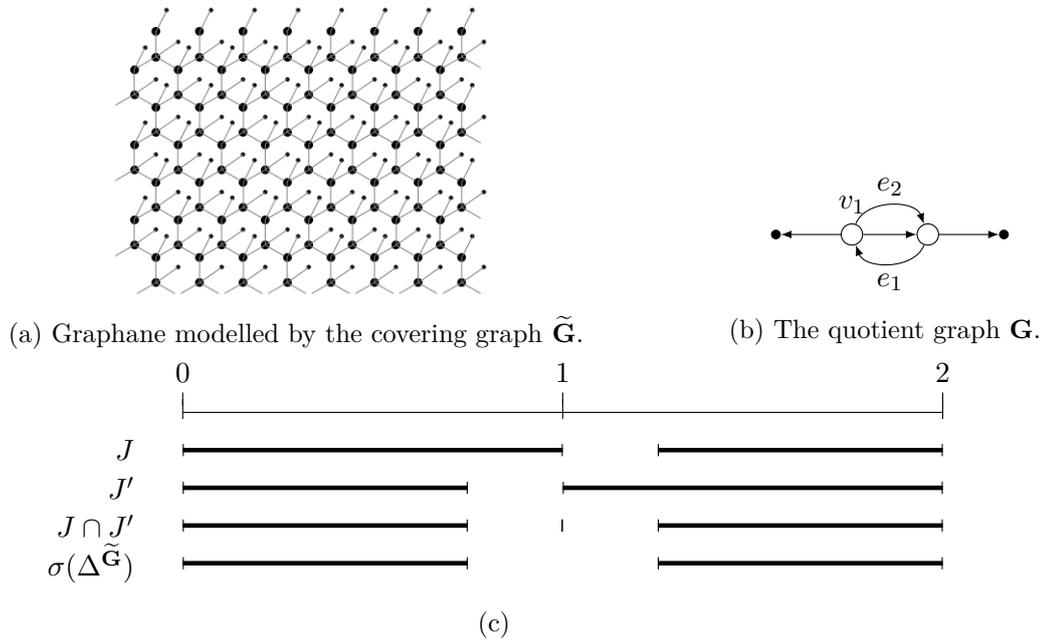}}}
  \subcaptionbox{The quotient graph $\G$.\label{subfig:Graphane_0}}
  [.3\linewidth]{ \begin{tikzpicture}[auto,
      vertex/.style={circle,draw=black!100,fill=black!100, thick,
        inner sep=0pt,minimum size=1mm},scale=2] %
      \node (O) at (-.5,0) [circle, minimum width=8pt, draw, inner
      sep=0pt, label=above:$v_1$] {}; %
      \node (A) at (0,0) [circle, minimum width=8pt, draw, inner
      sep=0pt, label=above:] {}; %
      \draw[-latex] (O) to[bend left=70] node[above] {$e_2$} (A); %
      \draw[-latex] (O) to[bend left=0] node[above] {} (A); %
      \draw[-latex] (A) to[bend left=70] node[below] {$e_1$} (O); %
      \node (O1) at (-1,0) [vertex,label=above:]{}; %
      \node (A1) at (.5,0) [vertex,label=above:]{}; %
      \draw[-latex] (O) to[bend left=0] node[above] {} (O1); %
      \draw[-latex] (A) to[bend left=-0] node[above] {} (A1); %
    \end{tikzpicture}}\\
  \centering \subcaptionbox{\label{subfig:spectra}}
  [\linewidth]{ \begin{tikzpicture}[scale=5] %
      \draw[-] (0,0.5) -- (2,0.5) ; 
      \foreach \x in {0,1,2} 
      \draw[shift={(\x,0.5)},color=black] (0pt,1pt) -- (0pt,-.5pt); %
      \foreach \x in {0,1,2} 
      \draw[shift={(\x,0.5)},color=black] (0pt,0.5pt) -- (0pt,1.5pt)
      node[above] {$\x$}; %
           	          	          
      \draw[] (-.1,.4)node[left] {$J$}; %
      \draw[|-|] (0,.4) -- (1,.4); %
      \draw[line width=.6mm] (0,.4) -- (1,.4); %
      \draw[|-|] (1.25,.4) -- (2,.4); %
      \draw[line width=.6mm] (1.25,.4) -- (2,.4); %
        	
      \draw[] (-.1,.3)node[left] {$J'$}; %
      \draw[|-|] (0,.3) -- (0.75,.3); %
      \draw[line width=.6mm] (0,.3) -- (0.75,.3); %
      \draw[line width=.6mm] (1,.3) -- (2,.3); %
      \draw[|-|] (1,.3) -- (2,.3); %
           	          	            
      \draw[] (-.1,.2)node[left] {$J\cap J'$}; %
      \draw[|-|] (0,.2) -- (0.75,.2); %
      \draw[line width=.6mm] (0,.2) -- (0.75,.2); %
      \draw[|-|] (1.25,.2) -- (2,.2); %
      \draw[line width=.6mm] (1.25,.2) -- (2,.2); %
      \draw[|-|] (.9999,.2) -- (1.00001,.2); %
           	          	             	          	    	
      \draw[] (-.1,.1)node[left] {$\sigma(\Delta^{\Gt})$}; %
      \draw[|-|] (0,.1) -- (0.75,.1); %
      \draw[line width=.6mm] (0,.1) -- (0.75,.1); %
      \draw[|-|] (1.25,.1) -- (2,.1); %
      \draw[line width=.6mm] (1.25,.1) -- (2,.1); %
    \end{tikzpicture}}
  \caption{Spectral gaps of \emph{graphane}.  This is again an example where
    the quotient graph has Betti number larger than $1$.  Here, $J$ is
    the spectral localisation of the pair $\G - \{e_1,e_2\}$ and $\G
    -\{v_1\}$ giving again a very good result for the actuall spectrum
    of $\Gt$ (except for the spectral value $1$).\label{fig:Graphane}}
\end{figure}
\end{example}

\providecommand{\bysame}{\leavevmode\hbox to3em{\hrulefill}\thinspace}
\clearpage


\end{document}